\documentclass{amsart}
%Use the enumitem package instead of enumerate

\usepackage[normalem]{ulem}

\usepackage[shortlabels]{enumitem}

%\usepackage{enumitem}

%then it will support the same suntax as the enumerate package.
%The enumerate package does not provide any extra configurations other than the label.
\numberwithin{equation}{section}

\usepackage{amssymb,amsmath,amsthm,verbatim,IEEEtrantools,hyperref}

\theoremstyle{definition}
\newtheorem{theorem}{Theorem}[section]
\newtheorem{thm}[theorem]{Theorem}
\newtheorem*{lemma*}{Lemma}
\newtheorem{lemma}[theorem]{Lemma}
\newtheorem{corollary}[theorem]{Corollary}
\newtheorem{prop}[theorem]{Proposition}

\newtheorem{remark}[theorem]{Remark}

\theoremstyle{definition}

\newtheorem*{Outline-of-the-Rest-of-the-Paper}{Outline of the Rest of the Paper}

\def\forinfmany{\text{ for infinitely many }}
\def\P{P}

\def\RR{\mathbb{R}}
\def\CC{\mathbb{C}}

\def\QQ{\mathbb{Q}}
\def\ZZ{\mathbb{Z}}
\def\NN{\mathbb{N}}

\def\supp{\mathrm{supp}}

\newcommand{\rbr}[1]{\left( {#1} \right)}

\newcommand{\cbr}[1]{ \left\{ {#1} \right\} }
\newcommand{\abr}[1]{\left\langle {#1} \right\rangle}
\newcommand{\abs}[1]{\left| {#1} \right|}
\newcommand{\norm}[1]{\left\|#1\right\|}
\def\one{\mathbf{1}}

%\newcommand*\wc{ \, \cdot \,}
%wc for wildcard
\renewcommand{\Re}{\operatorname{Re}}

\newcommand{\OK}{\mathcal{O}_K}
\newcommand{\isom}{\cong}

\newcommand{\prin}{\mathrm{prin}}

\DeclareMathOperator{\tr}{tr}
\DeclareMathOperator{\dist}{dist}

%\usepackage{lineno}
%\linenumbers 

%\usepackage[notref,notcite]{showkeys}

\usepackage{xcolor}

\begin{document}

\author{Robert Fraser \and Kyle Hambrook \and Donggeun Ryou}
\title[Sharpness of the Mockenhaupt-Mitsis-Bak-Seeger theorem]{Sharpness of the Mockenhaupt-Mitsis-Bak-Seeger  Fourier restriction theorem in all dimensions}

\thanks{This material is based upon work supported by the National Science Foundation under
Award No. 2400329}

\begin{abstract}
We prove the optimality of the exponent in the Mockenhaupt-Mitsis-Bak-Seeger Fourier restriction theorem in all dimensions $d$ and the full parameter range $0 < a,b < d$. Our construction is  deterministic and also yields Salem sets. 
\end{abstract}

\maketitle

\section{Introduction}
\subsection{Main Result}
This article concerns the optimality of 
the constant $p_{*}(a,b,d)$ in 
the following general Fourier restriction theorem.

\begin{thm}[Mockenhaupt-Mitsis-Bak-Seeger]\label{frac ST}
Let $0 < a, b < d$, with $d$ an integer. 
Define $p_{*}(a,b,d) = (4d-4a+2b)/b.$
Suppose $\mu$ is a Borel  measure on $\RR^d$ with the following properties: 
\begin{enumerate}[(A)]
    \item $\mu(B(x,r)) 
    %\leq C 
\lesssim 
    r^{a}$ for all $x \in \RR^d$ and all $r > 0$.  
    \item $|\widehat{\mu}(\xi)| \lesssim 
    %\leq C 
    (1+|\xi|)^{-b/2}$ for all $\xi \in \RR^d$.  
\end{enumerate}
Then, for each $p \geq p_{*}(a,b,d)$, the following holds: 
\begin{enumerate}[(A)]
\setcounter{enumi}{17}
\item $\|\widehat{f\mu}\|_{L^p} 
%\leq C_p 
\lesssim_p 
\|f\|_{L^2(\mu)}$ for all $f \in L^2(\mu)$.
\end{enumerate}
\end{thm}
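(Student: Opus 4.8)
The plan is to establish the restriction estimate $\norm{\widehat{f\mu}}_{L^{p}}\lesssim_{p}\norm{f}_{L^{2}(\mu)}$ by a Stein--Tomas-type argument. First I would pass to the self-adjoint formulation. Condition (A) forces $\mu$ to be a tempered distribution, and for $f\in L^{2}(\mu)$ Cauchy--Schwarz shows that $f\mu$ is tempered as well, so all of the Fourier transforms that appear make sense. With $Tf=\widehat{f\mu}$, one computes $T^{*}g=\check g\,|_{\supp\mu}$ and $TT^{*}g=\widehat{\mu}\ast g$ (up to inessential reflections); hence, for each finite $p$, the estimate is equivalent via $\norm{T}_{L^{2}(\mu)\to L^{p}}^{2}=\norm{TT^{*}}_{L^{p'}\to L^{p}}$ to the convolution bound
\[
\norm{\widehat{\mu}\ast g}_{L^{p}(\RR^{d})}\lesssim_{p}\norm{g}_{L^{p'}(\RR^{d})},\qquad\tfrac1p+\tfrac1{p'}=1 .
\]
Since $a<d$ we have $p_{*}(a,b,d)>2$, so throughout we are in the regime $p'\le 2\le p$, which is exactly what the interpolation below requires (and the case $p=\infty$, when $\mu$ is finite, is immediate from Cauchy--Schwarz).

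Next I would decompose $\widehat{\mu}$ dyadically in frequency: fix a Littlewood--Paley partition $1=\sum_{j\ge 0}\psi_{j}$ with $\psi_{0}$ supported near the origin and $\psi_{j}=\psi(2^{-j}\wc)$ supported on $\abs{\xi}\sim 2^{j}$ for $j\ge 1$, and set $K_{j}=\widehat{\mu}\,\psi_{j}$. The term $K_{0}$ is harmless: by (B) it is a bounded, compactly supported function, so $g\mapsto K_{0}\ast g$ maps $L^{p'}\to L^{p}$ for every $p\ge 2$ by Young's inequality. For $j\ge 1$ I record two bounds. From (B) directly, $\norm{K_{j}}_{L^{\infty}}\lesssim 2^{-jb/2}$, whence
\[
\norm{K_{j}\ast g}_{L^{\infty}}\lesssim 2^{-jb/2}\norm{g}_{L^{1}} .
\]
From (A): writing $\check{K}_{j}=\mu\ast\check{\psi}_{j}$ with $\check{\psi}_{j}$ an $L^{1}$-normalized Schwartz bump at scale $2^{-j}$, decomposing the integral $\int(1+2^{j}\abs{x-y})^{-N}\,d\mu(y)$ over the annuli $\abs{x-y}\sim 2^{k-j}$ and using $\mu(B(x,2^{k-j}))\lesssim 2^{(k-j)a}$ gives $\norm{\check{K}_{j}}_{L^{\infty}}\lesssim 2^{j(d-a)}$, so by Plancherel
\[
\norm{K_{j}\ast g}_{L^{2}}\lesssim 2^{j(d-a)}\norm{g}_{L^{2}} .
\]

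The heart of the matter is the interpolation and the summation in $j$. Riesz--Thorin interpolation between the $L^{1}\to L^{\infty}$ bound $2^{-jb/2}$ and the $L^{2}\to L^{2}$ bound $2^{j(d-a)}$ lands, with $\theta=2/p$, exactly at the dual pair $L^{p'}\to L^{p}$ with
\[
\norm{K_{j}\ast g}_{L^{p}}\lesssim 2^{\,j\left(-\frac{b}{2}+\frac{2d-2a+b}{p}\right)}\norm{g}_{L^{p'}} .
\]
Summing over $j\ge 1$ converges precisely when $-\tfrac{b}{2}+\tfrac{2d-2a+b}{p}<0$, that is, when $p>(4d-4a+2b)/b=p_{*}(a,b,d)$; combined with the $K_{0}$ term and the reduction above, this proves the estimate for all $p>p_{*}$.

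The main obstacle is the endpoint $p=p_{*}$, where the geometric series just obtained degenerates into the constant series, so the naive summation of dyadic operator norms fails. This is exactly the difficulty in the endpoint case of the classical Stein--Tomas inequality, and the standard remedies carry over: either embed $g\mapsto\widehat{\mu}\ast g$ into a family of convolution operators holomorphic in a vertical strip, mapping $L^{1}\to L^{\infty}$ uniformly on one edge and $L^{2}\to L^{2}$ uniformly on the other, with the operator of interest at the appropriate interior point, and invoke Stein's interpolation theorem for analytic families; or upgrade the two dyadic estimates above to Lorentz-space bounds that \emph{are} summable at $p_{*}$ and apply real interpolation. This is the endpoint refinement due to Bak--Seeger. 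Assembling the three contributions yields the estimate for all $p\ge p_{*}(a,b,d)$, which is the theorem.
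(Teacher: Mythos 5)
The paper does not actually prove Theorem~\ref{frac ST}; it quotes it as a known result, attributing the open range $p > p_{*}(a,b,d)$ to Mockenhaupt and (independently) Mitsis, and the endpoint $p = p_{*}(a,b,d)$ to Bak and Seeger. So there is no in-paper argument to compare against. Your reconstruction of the open-range case is the standard and correct one: the $TT^{*}$ reduction to $\|\widehat{\mu}\ast g\|_{L^{p}}\lesssim\|g\|_{L^{p'}}$, the dyadic decomposition $K_{j}=\widehat{\mu}\,\psi_{j}$, the $L^{1}\to L^{\infty}$ bound $2^{-jb/2}$ from (B), the $L^{2}\to L^{2}$ bound $2^{j(d-a)}$ from (A) via the annular decomposition of $\mu\ast\check{\psi}_{j}$, the Riesz--Thorin step landing at the dual pair $(L^{p'},L^{p})$ with exponent $-\tfrac{b}{2}+\tfrac{2d-2a+b}{p}$, and the summation exactly when $p>p_{*}$. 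This is precisely the Mockenhaupt--Mitsis proof. Your treatment of $K_{0}$ and the observation that $a<d$ forces $p_{*}>2$ (so the interpolation is always between $L^{1}\to L^{\infty}$ and $L^{2}\to L^{2}$) are correct.

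The one place where the proposal is only a sketch rather than a proof is the endpoint $p=p_{*}$, where you correctly note that the naive dyadic summation degenerates and correctly identify the two standard fixes (analytic families of operators via Stein interpolation, or refining the dyadic bounds to Lorentz estimates and summing via real interpolation). You attribute this to Bak--Seeger, which is accurate, but you do not carry either argument out; since the paper itself treats the theorem as a citation rather than a proved statement, this level of detail matches the role the theorem plays here, and no step you have written is incorrect.
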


\noindent The notation is explained in Subsection \ref{notation}. The history of this theorem and of the question of the optimality of the constant $p_{*}(a,b,d)$ is deferred to Section \ref{motivation}. 

The main result of this article is that the constant $p_{*}(a,b,d)$ in Theorem \ref{frac ST} is best possible for all values of $0 < a,b < d$. 

We prove our main result by constructing a measure   
on a set of vectors that are well-approximable by elements of lattices arising from algebraic number theory. 

Let $K$ be a number field (i.e., a finite extension field of $\QQ$) of degree $d$. 
Let $\OK$ be the ring of integers of $K$.
For each ideal $I \subseteq \OK$, let $N(I)$ denote the norm of $I$ and $I^{-1}$ denote the fractional ideal which is the inverse of $I$. 
Let $B=\cbr{\omega_1,\ldots,\omega_{d}}$ be an integral basis for $K$ (i.e., a basis for $K$ as a $\QQ$-vector space which is also a basis for $\OK$ as a $\ZZ$-module). We identify $\mathbb{Q}^d$ with $K$ by identifying 
$q=(q_1,\ldots,q_n) \in \mathbb{Q}^d$ with 
$q=\sum_{i=1}^{d} q_i \omega_i \in K$. 
Since $B$ is an integral basis, 
this also identifies $\mathbb{Z}^d$ with $\OK$. 
Moreover, if $I \subseteq \OK$ is an integral ideal, $I^{-1}$ is identified with a lattice containing $\mathbb{Z}^d$. 
This identification also extends to lengths and distances:  
If $q=\sum_{i=1}^d q_i \omega_i \in K$ 
and $x = (x_1,\ldots,x_d) \in \RR^d$, 
then 
%$|q| = \max_{1 \leq i \leq n} |q_i|$. 
$|q| = \rbr{\sum_{i=1}^d |q_i|^2}^{1/2}$  
and $|q-x| = \rbr{\sum_{i=1}^d |q_i-x_i|^2}^{1/2}$.

Let $\tau > 1$. Define 
$E(K,B,\tau)$ to be the set of all $x \in \RR^d$ such that 
$$
\dist(x, I^{-1}) \leq |N(I)|^{-(\tau+1)/d} 
$$
for infinitely many ideals $I$ of $\OK$.
Here 
$
\dist(x,I^{-1}) = \min\cbr{|x-t|: t \in I^{-1}}. 
$

\begin{thm}[Main Theorem]\label{mainthm3}
Let $1 \leq q < \infty$. 
Let $-d < \rho < d$ 
and define  
$
\rho^+ = \max\cbr{\rho,0}$ and $\rho^- = \max\cbr{-\rho,0}
$. 
There exists a Borel probability measure 
$\mu$ with the following properties: 
\begin{enumerate}[(i)]
    \item $\supp(\mu) \subseteq E(K, B, \tau)$; 
\item $\mu$ satisfies (A) for each exponent $a < \frac{2d  - \rho^-}{1 + \tau}$;
\item 
$\mu$ satisfies (B) for each  exponent $b < \frac{2(d-\rho^+)}{1 + \tau}$; 
\item There exists a sequence of functions $f_k \in L^q(\mu)$ such that
\[\lim_{k \to \infty} \frac{\|\widehat{f_k \mu}\|_{L^p}}{\|f_k\|_{L^q(\mu)}} = \infty\]
whenever $p < p(\tau, \rho, q, d) := \frac{q(d \tau - \rho)}{(q-1) (d-\rho^+)}$.
\end{enumerate}
\end{thm}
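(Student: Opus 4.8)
The construction should be a Cantor-type measure supported on points well-approximated by the lattices $I^{-1}$. The plan is as follows. First I would select an infinite sequence of ideals $I_n \subseteq \OK$ with norms $N_n := |N(I_n)|$ growing very rapidly (super-exponentially, say $N_{n+1} \geq N_n^{C_n}$ with $C_n \to \infty$) and with additional control so that the lattices $I_n^{-1}$ (which all contain $\ZZ^d$) behave regularly — in particular $I_n^{-1}$ has covolume $N_n^{-1}$ and, being a lattice in $\RR^d$ equivalent under a bounded linear map to $N_n^{-1/d}\ZZ^d$, its points are roughly $N_n^{-1/d}$-separated and $N_n^{-1/d}$-dense. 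At stage $n$ I would take the $\delta_n$-neighborhood of $I_n^{-1}$ intersected with the unit cube, where $\delta_n = N_n^{-(\tau+1)/d}$ (matching the definition of $E(K,B,\tau)$), and thin it out: keep only a $\rho$-dependent proportion of the lattice points. Concretely, inside each cube of the previous generation I retain about $(N_n/N_{n-1})^{(d-\rho^+)/d}$ of the available $\approx (N_n/N_{n-1})$ lattice points when $\rho \geq 0$ — and when $\rho < 0$ I instead shrink the balls placed around each retained point to radius $N_n^{-(\tau+1)/d} \cdot (\text{something})^{\rho^-/d}$ relative to a full-density choice, so that the measure is spread over fewer effective dimensions. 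The nested intersection $E = \bigcap_n E_n$ lies in $E(K,B,\tau)$ by construction, and $\mu$ is the weak-* limit of uniform measures on the $E_n$.

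The ball condition (ii) and the Fourier decay (iii) are then Frostman-type estimates verified via the standard Cantor machinery (cf. the analogous computations for random/deterministic Salem sets): the local dimension at scale $N_n^{-(\tau+1)/d}$ is $\frac{\log(\#\text{generation-}n\text{ balls})}{\log(1/\text{radius})}$, which the thinning is engineered to make $\frac{2d-\rho^-}{1+\tau}$; the rapid growth of $N_n$ ensures the estimate propagates across scales with no loss, giving (ii) for every $a$ below this value. For (iii) I would estimate $\widehat{\mu}(\xi)$ dyadically in $|\xi|$: for $|\xi| \approx N_n^{(\tau+1)/d}$ the cancellation comes from averaging over the lattice $I_n^{-1}$ — this is where the number-theoretic input is essential, since Poisson summation over $I_n^{-1}$ (equivalently a Gauss-sum / character-sum estimate on $\OK/I_n$) produces square-root cancellation uniformly in the direction of $\xi$, yielding decay $|\xi|^{-b/2}$ for every $b < \frac{2(d-\rho^+)}{1+\tau}$. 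The key point distinguishing this from a purely random construction is that the algebraic lattices are genuinely equidistributed in every direction, so no randomness is needed.

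For part (iv), the restriction-type failure, I would take $f_k$ to be (a normalization of) the indicator-like bump $\mathbf{1}_{E_k}$, or more precisely the density $d\mu_k/d\mu$ where $\mu_k$ is the stage-$k$ approximation, so that $f_k \mu \approx \mu_k$ is essentially uniform measure on the $\delta_k$-neighborhood of the retained portion of $I_k^{-1}$. Then $\|f_k\|_{L^q(\mu)}$ is a power of the number of stage-$k$ balls (namely their count to the power $(q-1)/q$ after normalization), while $\|\widehat{f_k\mu}\|_{L^p}$ is bounded below by integrating $|\widehat{\mu_k}|$ over the region $|\xi| \lesssim \delta_k^{-1} = N_k^{(\tau+1)/d}$ where $\widehat{\mu_k}$ has size $\approx 1$ (it is essentially a sum of $\delta_k^{-1}$-scale bumps at the dual lattice points, each contributing on a dual ball of volume $\approx \delta_k^{d}$, and there are $\approx N_k^{(d-\rho^+)/d}$ relevant dual-lattice sites in a fundamental domain). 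Carefully counting these — tracking how the thinning exponent $(d-\rho^+)/d$ enters both the $L^q$ norm and the $L^p$ norm — the ratio $\|\widehat{f_k\mu}\|_{L^p}/\|f_k\|_{L^q(\mu)}$ comes out as $N_k$ raised to a positive power exactly when $p < \frac{q(d\tau-\rho)}{(q-1)(d-\rho^+)}$, and it diverges because $N_k \to \infty$.

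The main obstacle I anticipate is twofold. First, getting the Fourier decay in (iii) with the \emph{sharp} exponent $\frac{2(d-\rho^+)}{1+\tau}$ uniformly over all directions: one must handle frequencies $\xi$ at intermediate scales (between consecutive $N_n^{(\tau+1)/d}$) and frequencies not aligned with the dual lattice, which requires the rapid-growth choice of $N_n$ together with a clean Poisson-summation estimate for the fractional ideals $I_n^{-1}$ — the arithmetic of how $N(I)$ interacts with the covering radius is where the exponent $(\tau+1)/d$ must be used exactly. Second, the bookkeeping in (iv): the exponents $\rho^+$, $\rho^-$, $q$, $d$, $\tau$ must all be tracked through a chain of inequalities, and one has to be sure the lower bound for $\|\widehat{f_k\mu}\|_{L^p}$ (an $L^p$ average) is not lossy — this likely needs that $\widehat{f_k\mu}$ is not just pointwise $\approx 1$ on a large set but genuinely concentrated, which again follows from the regularity of the algebraic lattice. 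Everything else is the routine (if lengthy) Cantor-measure verification.
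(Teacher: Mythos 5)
Your plan captures the rough shape --- a multi-scale construction concentrated near lattices $I^{-1}$, Fourier decay from number theory, and a structured test function for (iv) --- but it departs from the paper at a point that I believe is a genuine gap, not a stylistic difference.

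The paper does \emph{not} use a single ideal per scale. The stage-$k$ factor $F_k$ is a weighted sum of $L^1$-normalized bumps over \emph{all} prime ideals $I$ with $M_k^d/2 \leq N(I) \leq M_k^d$ (the collection $Q(M_k)$, of size $\sim M_k^d/\log M_k$), plus one extra ``spike'' prime $J_k$ of norm $\sim M_k^{d+\rho}$ carrying weight $P_k = M_k^{\rho^-}$ (and only at odd $k$). This is not bookkeeping: the Fourier decay of $F_k$, and hence of $\mu$, comes from the divisor bound (Lemma \ref{primedivisorbound}) --- a given frequency $s$ lies in the dual lattice $T(\delta^{-1}I)$ for only $O(\log|s|/\log M_k)$ of the $\sim M_k^d/\log M_k$ ideals in $Q(M_k)$, giving a factor $\sim M_k^{-d}\log|s|$ (see Lemma \ref{Fk fourier lemma}). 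In contrast, a measure supported on a $\delta$-neighborhood of a single lattice $I^{-1}$ (thinned or not) has $\widehat{\mu_k}(\xi) \approx 1$ at \emph{every} dual lattice point with $|\xi|\lesssim\delta^{-1}$, because $e(r\cdot\xi)=1$ there for every $r \in I^{-1}$; thinning preserves those values, it does not kill them. There are far too many such points for the decay $|\xi|^{-b/2}$ to hold. Gesturing at Gauss sums does not rescue this: Gauss-sum cancellation pertains to complete character sums at density $\sim 1/2$, not to a thinning at the arbitrary fraction $N^{-\rho^+/d}$ your plan requires, and is in any case a different (and, as far as I know, unworkable here) mechanism.

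This same issue makes your version of (iv) internally inconsistent. You want $\widehat{\mu_k}$ to be $\approx 1$ at many dual-lattice points up to $|\xi|\lesssim\eta_k^{-1}$, but the stability of the construction forces $\widehat{\mu}$ to agree with $\widehat{\mu_k}$ to small error on a comparable range, and then $\widehat{\mu}$ cannot satisfy (iii). The paper's resolution is the key move you are missing: the bulk of the measure lives near the \emph{many} lattices $I^{-1}$, $I \in Q(M_k)$ (whose contributions cancel, giving (iii)), while a separately weighted part lives near the single spike lattice $J_k^{-1}$. The test function in (iv) is then \emph{not} the stage-$k$ density, but $f_k = \Phi_{J_k,\eta_k}$, the bump sum over $J_k^{-1}$ alone. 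Multiplying by $f_k$ kills the $Q(M_k)$ contributions (separation Lemma \ref{separation lemma}) and leaves the periodic, non-decaying object $\widehat{\Phi_{J_k,\eta_k}^2}$, which is genuinely $\gtrsim 1$ on the dual of $J_k^{-1}$ (Lemma \ref{lem_lb_fkmu}). Also, for $\rho<0$ the paper boosts the spike's weight by $P_k = M_k^{\rho^-}$ rather than shrinking the balls as you propose; this is what makes the regularity exponent $(2d-\rho^-)/(1+\tau)$ in (ii) come out correctly in Proposition \ref{Prop_mu_reg} while leaving the Fourier decay exponent untouched.
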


Subsection \ref{ideas} describes the main ideas and novel features of the proof of this theorem.  
%Theorem \ref{mainthm3}. 
The formal proof of 
the theorem 
%Theorem \ref{mainthm3} 
constitutes Sections \ref{Algebraic Number Theory: Basics section} to \ref{sec_Failure_Estimate}.

Theorem \ref{mainthm3} and the following remark 
show that, for any integer $d$ and any real numbers $a$ and $b$ satisfying $0 < a, b < d$, the exponent $p_*(a,b,d)$ in  the Mockenhaupt-Mitsis-Bak-Seeger restriction theorem cannot be improved.

\begin{remark}
We may assume $b \leq 2a$. Indeed, Mitsis \cite[Proposition 3.1]{mitsis-res} 
showed that if (A) and (R) hold, then 
$p \geq 2d/a$.
By combining this with Theorem \ref{frac ST}, we see that (A) and (B) imply $b \leq 2a$. 
Let $p_0 < p_{*}(a,b,d)$. Choose $a_0$ and $b_0$ such that $0 < a < a_0 < d$, $0 < b < b_0 < d$, $b_0 \leq 2a_0$, and $p_0 < p_{*}(a_0,b_0,d) < p_{*}(a,b,d)$. 
If $b_0 \leq a_0$, choose $\tau = \frac{2d}{a_0} - 1$ and $\rho = d\left(1-\frac{b_0}{a_0}\right)$. 
If $a_0 \leq b_0 \leq 2a_0$, choose $\tau = \frac{2d}{b_0} - 1$ and $\rho = 2d\left(\frac{a_0}{b_0}-1\right)$. 
In both cases, $a_0 = \frac{2d  - \rho^-}{1 + \tau}$,  $b_0 = \frac{2(d  - \rho^+)}{1 + \tau}$, and 
$p_{*}(a_0,b_0,d) = p(\tau,\rho,2,d)$. 
Then Theorem \ref{mainthm3} gives a measure $\mu$ satisfying (A) and (B) but failing (R) for $p=p_0$ (and every other $p < p_{*}(a_0,b_0,d)$). 
\end{remark}

\subsection{Deterministic Salem Sets and a Variant of \texorpdfstring{$E(K,B,\tau)$}{E(k,B,tau}.} 

A set in $\RR^d$ is called a Salem set if it supports a Borel measure $\mu$ that satisfies (B) for every value of $b$ less than the Hausdorff dimension of the set. 
There are many random constructions of Salem sets in $\RR^d$ (see \cite{salem}, \cite{kahane-book}, \cite{kahane-1966-fourier}, \cite{kahane-1966-brownian}, \cite{bluhm-1}, \cite{chen-seeger}, \cite{ekstrom}, \cite{LP}, \cite{shmerkin-suomala}). 
The first deterministic Salem sets in $\RR$ of arbitrary dimension were found by 
Kaufman \cite{Kaufman}, who proved 
$$
E(\tau) = \cbr{x \in \RR : |x-r/q| \leq |q|^{-(1+\tau)} \forinfmany (q,r) \in \ZZ \times \ZZ }
$$
is a Salem set of dimension $2/(1+\tau)$ when $\tau > 1$.

Fraser and Hambrook \cite{hambrook-fraser-Rn}  
extended Kaufman's result to $\RR^d$ 
by showing the set 
$$
E_{\prin}(K,B,\tau)=
\cbr{x \in \RR^d : |x-r/q| \leq |q|^{-(\tau+1)} \text{ for infinitely many } (q,r) \in \OK \times \OK}
$$
is a Salem set of dimension $2d/(1+\tau)$ when $\tau > 1$.

The set $E_{\prin}(K,B,\tau)$ is a variant of $E(K,B,\tau)$ defined in terms of \textit{elements} rather than \textit{ideals}. 
In the present article, we primarily considered $E(K, B, \tau)$ rather than $E_{\prin}(K, B, \tau)$ because working with \textit{ideals} rather than \textit{elements} simplifies many parts of the argument. However, the two sets have many of the same properties, as the next two results illustrate. 

By Frostman's lemma, the exponent $b$ in (B) can never exceed the Hausdorff dimension of the support of $\mu$ (see, e.g.,  \cite[Section 3.6]{mattila-book-2}). 
Thus, Theorem \ref{mainthm3} combined with Proposition \ref{hausdorff upper bound} (which gives an upper bound on the Hausdorff dimension of $E(K,B,\tau)$)
implies

\begin{corollary}\label{Salem_cor}
$E(K,B,\tau)$ is a Salem set of dimension $2d/(1+\tau)$ for every $\tau > 1$.
\end{corollary}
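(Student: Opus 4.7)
The plan is to specialize Theorem \ref{mainthm3} at $\rho = 0$ and combine its output with the upper bound from Proposition \ref{hausdorff upper bound}. First I would take $\rho = 0$ (so that $\rho^+ = \rho^- = 0$) and any admissible value of $q$; Theorem \ref{mainthm3} then produces a single Borel probability measure $\mu$ with $\supp(\mu) \subseteq E(K,B,\tau)$ satisfying the Frostman condition (A) for every $a < 2d/(1+\tau)$ and the Fourier decay condition (B) for every $b < 2d/(1+\tau)$.

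Next I would extract the lower bound $\dim_H E(K,B,\tau) \geq 2d/(1+\tau)$ from the mass distribution principle: for each $a < 2d/(1+\tau)$, property (A) realizes $\mu$ as an $a$-dimensional Frostman measure supported in $E(K,B,\tau)$, forcing $\dim_H E(K,B,\tau) \geq a$; letting $a \uparrow 2d/(1+\tau)$ then yields the claimed bound. The matching upper bound is exactly the content of Proposition \ref{hausdorff upper bound}, so $\dim_H E(K,B,\tau) = 2d/(1+\tau)$. Because the same measure $\mu$ satisfies (B) for every $b$ strictly less than this common value, $E(K,B,\tau)$ meets the definition of a Salem set recalled in the preceding subsection.

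There is essentially no obstacle at this stage: all of the analytic and number-theoretic difficulty has already been absorbed into Theorem \ref{mainthm3} and Proposition \ref{hausdorff upper bound}. The only content of the corollary is the observation that the choice $\rho = 0$ makes the Frostman exponent in (ii) and the Fourier decay exponent in (iii) coincide with the Hausdorff-dimension upper bound of Proposition \ref{hausdorff upper bound}, so a Frostman-type sandwich pins down the dimension exactly while the Fourier decay provided by the same measure delivers the Salem property.
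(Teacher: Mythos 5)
Your proof is correct, and in essence it takes the same route as the paper's one-sentence argument: both specialize Theorem \ref{mainthm3} at $\rho = 0$ to obtain a single Borel probability measure $\mu$ supported in $E(K,B,\tau)$ that satisfies (A) and (B) for every exponent strictly below $2d/(1+\tau)$, and both pair this with the Hausdorff-dimension upper bound of Proposition \ref{hausdorff upper bound}. The one genuine difference is how the lower bound $\dim_H E(K,B,\tau) \geq 2d/(1+\tau)$ is extracted. You apply the mass distribution principle to the Frostman condition (A), while the paper instead appeals to the Fourier-analytic form of Frostman's lemma (cited to \cite[Section 3.6]{mattila-book-2}), namely that the decay exponent $b$ in (B) cannot exceed the Hausdorff dimension of $\supp\mu$. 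Both are standard one-liners; your route via (A) is marginally more elementary in that it needs only the mass distribution principle rather than the Fourier-to-Hausdorff-dimension comparison. In both versions the key point, which you correctly emphasize, is that a single measure $\mu$ does all the work: once $\dim_H E(K,B,\tau) = 2d/(1+\tau)$ is pinned down, that same $\mu$ satisfies (B) for every $b$ below the dimension, giving the Salem property.
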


By modifying the proof of Theorem \ref{mainthm3} as described in Section \ref{sec_prin}, we can prove 

\begin{thm}\label{Eprinthm}
The statement of Theorem \ref{mainthm3} also holds for $E_{\prin}(K, B, \tau)$.
\end{thm}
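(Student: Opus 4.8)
The plan is to mirror the proof of Theorem \ref{mainthm3}, replacing every appeal to an ideal $I$ of $\OK$ with an appeal to a \emph{principal} ideal $(q)$, $q \in \OK$, and keeping track of how this substitution changes the bookkeeping. Concretely, the measure $\mu$ for Theorem \ref{mainthm3} is built as a weak limit of a Cantor-type iteration: at each stage one fixes a large modulus (coming from the norm of an ideal), tiles $\RR^d$ by the translates of the corresponding lattice $I^{-1}$, and keeps the $|N(I)|^{-(\tau+1)/d}$-neighbourhoods of the lattice points, distributing mass so that (A) and (B) hold at the critical exponents while the $L^p$-inequality (R) fails below $p(\tau,\rho,q,d)$. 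For $E_{\prin}(K,B,\tau)$ one runs the \emph{same} iteration but only over moduli $q \in \OK$: at stage $k$ one chooses $q_k$ with $|N(q_k)|$ rapidly increasing, uses the lattice $q_k^{-1}\OK = (q_k)^{-1}$ (which, since $q_k \OK = (q_k)$ and $N((q_k)) = |N_{K/\QQ}(q_k)|$, is exactly one of the lattices already appearing in the ideal construction), and keeps the balls $B(r/q_k, |q_k|^{-(\tau+1)})$ over a complete set of residues $r$ modulo $q_k$. Since $|q_k|^{-(\tau+1)} \asymp |N(q_k)|^{-(\tau+1)/d}$ up to constants depending only on $K$ and $B$ (the norm form is a fixed positive-definite-comparable homogeneous form of degree $d$ in the coordinates, as recorded in Section \ref{Algebraic Number Theory: Basics section}), the neighbourhoods kept are comparable to those in the ideal construction, and the resulting limit measure is supported in $E_{\prin}(K,B,\tau)$.

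The key steps, in order, are: (1) verify that every lattice $(q)^{-1}$ and every scale $|N(q)|^{-(\tau+1)/d}$ used in the $E_{\prin}$ iteration is of the form already handled in the proof of Theorem \ref{mainthm3}, so that the single-stage ball-counting, overlap, and mass-distribution estimates carry over verbatim; (2) check that the \emph{supply} of usable moduli is still rich enough to push the iteration to infinity — i.e., that there exist infinitely many $q \in \OK$ with $|N(q)|$ as large as we wish and with the lattice $(q)^{-1}$ sufficiently ``spread out'' in all $d$ real/complex embeddings, which follows from Minkowski/Dirichlet-unit considerations already available in the algebraic preliminaries (one may, for instance, take $q_k$ to be a rational integer, or a suitable power of a fixed element, to avoid any subtlety); (3) re-derive the Frostman bound (A) and the Fourier-decay bound (B) for the limit measure, noting that the Poisson-summation / Gauss-sum computation of $\widehat{\mu_k}$ over $(q)^{-1}$ is identical to the ideal case because $(q)^{-1}/\ZZ^d$ is a finite group of order $|N(q)|$ just as $I^{-1}/\ZZ^d$ has order $N(I)$; and (4) reproduce the failure of (R): the same test functions $f_k$ (indicator-type functions concentrated on one residue class, or on all of them with equal weight) give the same ratio $\|\widehat{f_k\mu}\|_{L^p}/\|f_k\|_{L^q(\mu)}$ blowing up for $p < p(\tau,\rho,q,d)$, since that computation only uses the self-similar structure of the construction and the arithmetic of the modulus, both of which are unchanged.

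The main obstacle I anticipate is step (2) together with the uniformity constants in step (1): in the ideal construction one has complete freedom to choose $I$ with $N(I)$ hitting essentially any sufficiently large integer (every large enough integer is a norm of some ideal, or one can use prime ideals), whereas the set of norms of \emph{elements} of $\OK$ can be much sparser, and moreover the shape of the lattice $(q)^{-1}$ — its successive minima, which control the implied constants in the single-stage estimates — can degenerate if $q$ is chosen badly (e.g.\ $q$ close to a unit times a rational integer yields a ``thin'' lattice). Handling this cleanly will require either (a) restricting the moduli $q_k$ to rational integers $n_k \to \infty$, for which $(n_k)^{-1} = n_k^{-1}\ZZ^d$ is a perfectly round rescaled lattice and $N((n_k)) = n_k^d$, so all constants are uniform and the scale $n_k^{-(\tau+1)}= N((n_k))^{-(\tau+1)/d}$ matches exactly — this is the cleanest route and essentially reduces the problem to a one-parameter family of lattices; or (b) invoking the Dirichlet unit theorem to replace any chosen $q$ by $uq$ with $u$ a unit so that all Archimedean absolute values $|uq|_v$ are comparable, thereby controlling the successive minima of $(q)^{-1}$ uniformly. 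Either way, once a good family of moduli is fixed, the rest of the argument is a routine transcription of Sections \ref{Algebraic Number Theory: Basics section}--\ref{sec_Failure_Estimate}, and Section \ref{sec_prin} need only spell out this transcription and the choice of moduli. It also follows, exactly as Corollary \ref{Salem_cor} was deduced from Theorem \ref{mainthm3}, that $E_{\prin}(K,B,\tau)$ is a Salem set of dimension $2d/(1+\tau)$ — recovering and reproving the Fraser--Hambrook result \cite{hambrook-fraser-Rn} as a special case.
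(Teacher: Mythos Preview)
Your proposal rests on a misreading of the construction in Sections \ref{sec_Single-
Scale}--\ref{sec_measure}. You describe the iteration as ``at stage $k$ one chooses $q_k$'' --- a single modulus per stage --- and then worry about the shape of the single lattice $(q_k)^{-1}$. But the function $F_k$ in \eqref{Fk defn} is an \emph{average over all} prime ideals $I \in Q(M_k)$, of which there are $|Q(M_k)| \sim M_k^d/\log M_k$ by \eqref{QMsize}. This averaging is the entire mechanism behind the Fourier decay (iii): in the proof of \eqref{F-4*}, the bound on $\widehat{F_k}(s)$ comes from the divisor bound (Lemma \ref{primedivisorbound}), which says that only $O(\log|s|/\log M_k)$ of the $|Q(M_k)|$ terms survive at a given frequency $s$. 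A construction with one modulus per stage --- in particular your ``cleanest route'' of taking a single rational integer $n_k$ --- has no such cancellation: $\widehat{\Phi_{(n_k),\eta_k}}(s)$ is of size $n_k^d$ on the full lattice $T(\delta^{-1}(n_k))$, and the resulting measure does not satisfy (B) for any $b>0$. So steps (2)--(3) of your plan fail, and not for the reason you anticipate.

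The paper's actual modification (Section \ref{sec_prin}) keeps the many-ideal structure intact and converts each prime ideal into a principal one using finiteness of the class group (Theorem \ref{idealclassthm}): one fixes a finite set $\mathcal{H}$ of integral ideals so that every $I$ has some $H_I \in \mathcal{H}$ with $IH_I$ principal, and replaces $Q(M)$ by $Q_{\prin}(M) = \{IH_I : I \in Q(M/R)\}$. The divisor bound survives (Lemma \ref{principaldivisorbound}) because $J \subseteq IH_I$ implies $J \subseteq I$. Two genuine adjustments are then needed: Lemma \ref{separation facts}(c) must be reproved since distinct members of $Q_{\prin}(M_k)$ need not be coprime (this forces a shrinking of $\supp(\phi_0)$), and the coprimality step \eqref{I Jk norm calc} in Lemma \ref{h_2_est} must be redone for the same reason. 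None of this is visible from the single-modulus picture you sketch.
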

%Theorem \ref{mainthm3} can also be shown to hold for $E_{\prin}(K, B, \tau)$. 
%We explain the necessary modifications to the proof of Theorem \ref{mainthm3} in Section \ref{sec_prin}. 

\subsection{Notation}\label{notation}
For a measure $\nu$ on $\RR^d$, the Fourier transform of $\nu$ is   $$\widehat{\nu}(\xi)=\int_{\RR^d} e^{-2\pi i x \cdot \xi } d\nu(x) \quad \text{for all $\xi \in \RR^d$}.$$
We make the following exception to 
this notation: If $F:\RR^d \to \CC$ is $\ZZ^d$-periodic, 
$$
\widehat{F}(s)  
=\int_{[0,1)^d} e^{-2\pi i x \cdot s } F(x) dx \quad \text{for all $s \in \ZZ^d$}.$$

For a measure $\nu$ on $\RR^d$, $L^q(\nu)$ is the space of all measurable function $f:\RR^d \to \RR$ such that $\int |f|^q d\nu < \infty$, with functions that are equal $\nu$-a.e.\ identified.
If $\nu$ is Lebesgue measure, we write $L^q$ instead of $L^q(\nu)$.

We use $B(x,r)$  to denote the closed ball with center $x$ and radius $r$ in the Euclidean norm on $\RR^d$.

For any set $A$, the indicator function of $A$ is the function $\one_A$  defined by $\one_A(x) = 1$ if $x \in A$ and $\one_A(x) = 0$ otherwise. 

A statement of the form ``$F(u) \lesssim G(u)$ for all $u$'' means ``there exists a constant $C>0$ such that $F(u) \leq C G(u)$ for all $u$.'' If the implied constant $C$ depends on a relevant parameter, say $p$, we write $\lesssim_{p}$ instead of $\lesssim$. 
Note also 
$F(u)  \gtrsim G(u)$ means $G(u) \lesssim F(u)$,   
and 
$F(u) \sim G(u)$ means $F(u) \lesssim G(u)$ and $F(u) \gtrsim  G(u)$.

\subsection{Organization of The Paper}\label{organization}

Section \ref{motivation} contains motivation for Theorem \ref{mainthm3} and discussion about its proof. 

The proof of Theorem \ref{mainthm3} constitutes sections \ref{Algebraic Number Theory: Basics section} to \ref{sec_Failure_Estimate}.
In Section \ref{Algebraic Number Theory: Basics section}, we introduce the necessary algebraic number theory we need. 
In Section \ref{sec_Single-
Scale}, we give the definitions and prove Fourier estimates for some functions needed for the construction of the measure $\mu$. 
In Section \ref{Intermediate Measures}, we introduce some intermediate measures and prove Fourier-analytic estimates  that will be used in later sections. 
In Section \ref{sec_measure}, we define the measure $\mu$ and show that it satisfies the support and Fourier decay conditions in Theorem \ref{mainthm3}. In particular,  
Proposition \ref{support of mu lemma} and Proposition \ref{Prop_mu_decay}, respectively,  establish (i) and (iii) in Theorem \ref{mainthm3}. 
In Section \ref{regularity_section}, we prove the regularity property in Theorem \ref{mainthm3}. In particular, Proposition \ref{Prop_mu_reg} establishes (ii) in Theorem \ref{mainthm3}.  
In Section \ref{sec_Failure_Estimate}, we prove $\mu$ fails the restriction property (R) for the appropriate values of $p$. In particular,  Proposition \ref{res fail prop} establishes (iv) in Theorem \ref{mainthm3}.

Section \ref{convstab section} contains the Convolution Stability Lemma (Lemma \ref{convstab lemma}),  which is used in Section \ref{sec_measure} to prove necessary Fourier decay estimates. We have separated  this lemma from the main proof because it applies more  generally. 
In Section \ref{Hausdorff Dimension Upper Bound section}, we prove an upper bound on the Hausdorff dimension of $E(K,B,\tau)$. 
Section \ref{sec_prin} contains the modifications to the proof of Theorem \ref{mainthm3} necessary to prove Theorem \ref{Eprinthm}.

\section{Motivation and Discussion}\label{motivation}

\subsection{Restriction Theorems}\label{restriction theorems}

The problem of restriction of the Fourier transform to small sets is an interesting one with a long history (which we describe only briefly). Because the Fourier transform is an isometry on $L^2$, there is no sensible way to restrict the Fourier transform of an arbitrary $L^2$ function to a set of Lebesgue measure zero. 
On the other hand, because the Fourier transform of a function in $L^1$ is continuous, 
the Fourier transform of an $L^1$ function can be sensibly restricted to any measurable subset of $\mathbb{R}^d$. The area of \textit{Fourier restriction} concerns ways of restricting the Fourier transform of  
functions in spaces between  $L^1$ and $L^2$
to certain sets of Lebesgue measure zero.

We say that the measure $\mu$ on $\RR^d$  satisfies a \textit{Fourier extension} estimate with exponents $p$ and $q$ if
\begin{equation}\label{restriction}
\norm{\widehat{f \mu}}_{L^p} \lesssim_{p,q} 
%\leq C
\norm{f}_{L^q(\mu)}
\text{  for all $f \in L^q(\mu)$.}
\end{equation}
It can be seen using a duality argument (see, e.g., \cite{mattila-book-2}) that the estimate \eqref{restriction} implies that the Fourier transform of functions in $L^{p'}$ can be meaningfully restricted to the support of the measure $\mu$. More precisely, \eqref{restriction} is equivalent to the statement that the Fourier transform map $f \mapsto \widehat{f}$ is a bounded linear map from $L^{p'}$ to $L^{q'}(\mu)$. 
Here $p'$ and $q'$ are the conjugate exponents to $p$ and $q$, respectively. 

The foundational result in Fourier restriction theory is the Stein-Tomas theorem \cite{tomas-1975}, \cite{tomas-symp},  which says that \eqref{restriction} holds for $q=2$ and $p \geq 2(d+1)/(d-1)$ when $\mu$ is the surface measure on a sphere in $\RR^d$. Notable generalizations of the Stein-Tomas theorem to the surface measure on quadratic hypersurfaces and arbitrary smooth submanifolds in $\RR^d$ were obtained, respectively, by Strichartz \cite{Strichartz77} and Greenleaf \cite{Greenleaf}. 

All of the restriction theorems mentioned in the precious paragraph are special cases of 
Theorem \ref{frac ST}, the \textit{Mockenhaupt-Mitsis-Bak-Seeger restriction theorem}. 
Mockenhaupt \cite{mock} and Mitsis \cite{mitsis-res} independently proved Theorem \ref{frac ST} with $p > p_{*}(a,b,d)$;  
the endpoint case $p = p_{*}(a,b,d)$ was established by Bak and Seeger \cite{bak-seeger}.

Condition (A) in Theorem \ref{frac ST} is a well-known condition in geometric measure theory sometimes called the Frostman condition or ball condition. 
Mitsis \cite{mitsis-res} proved that condition (B) in Theorem \ref{frac ST} implies condition (A) with $a=b/2$. 
This makes Theorem \ref{frac ST} quite remarkable ---  it allows one use the  pointwise estimate (B) on $\widehat{\mu}$ to conclude an $L^p$ decay estimate (R) on $\widehat{f\mu}$ for all functions $f \in L^2(\mu)$.

\subsection{Sharpness of Theorem \ref{frac ST}.}

Mockenhaupt\cite{mock} and Mitsis \cite{mitsis-res} 
both noted the question of whether the constant $p_*(a,b,d)$ in Theorem \ref{frac ST} is best possible was an open problem.

When $d \geq 2$ and $a=b=d-1$, the constant $p_{*}(a,b,d)$ in Theorem \ref{frac ST} is known to be optimal because of an example due to Knapp (according to  \cite{tomas-symp}) for the surface measure on the sphere. 
The original purpose of Knapp's example was to show that the constant $2(d+1)/(d-1)$ in the Stein-Tomas theorem (mentioned in Subsection \ref{restriction theorems}) is best possible. 
The Knapp example was later extended 
 to smooth hypersurfaces with nonvanishing Gaussian curvature  by Strichartz \cite{Strichartz77}. See also Example 1.8 of Demeter \cite{Demeter} for a further generalization of Knapp's example  that shows \eqref{restriction} cannot hold if $\mu$ is a measure on a smooth hypersurface and $p < \frac{q(d + 1)}{(q - 1) (d - 1)}.$ 

 Hambrook and {\L}aba \cite{HL2013}  proved the optimality of $p_*(a, b, d)$ in Theorem \ref{frac ST} when $d=1$ and $0 < b \leq a \leq d$. 
 (In fact, Hambrook and {\L}aba \cite{HL2013} only claimed the case $a=b$ for $a,b$ of a special form.   
Hambrook \cite{hambrook-thesis} 
showed how optimality for all $b \leq a$ could be deduced from the result of \cite{HL2013}.  
Chen \cite{chen} modified the construction of 
\cite{HL2013} to obtain a technically stronger result that also implied optimality for $b \leq a$.) 
By combining 
the ideas of Knapp's example and \cite{HL2013}, 
Hambrook and {\L}aba \cite{HL2016} proved the optimality of $p_*(a,b,d)$ for $d \geq 2$ and $d-1 \leq b \leq a \leq d$.  
Note that  none of the sharpness results mentioned above covered the case $b>a$. Fraser, Hambrook, and Ryou 
\cite{fraser-hambrook-ryou} 
built upon the idea  of  \cite{HL2013} to
prove the optimality of $p_*(a,b,d)$ when $d=1$ in the full range $0 < a,b < d$ (notably, the construction is  deterministic). 
Li and Liu \cite{bochen-dimension} independently obtained the same optimality result for $d=1$ by a similar construction. 
The present article 
fully resolves 
the problem of the optimality of $p_*(a,b,d)$ for \textit{all} $d \geq 1$ and the full range $0 < a,b < d$.

\begin{remark}
We have considered the question of the sharpness of $p_*(a,b,d)$ in Theorem \ref{frac ST} by fixing $a,b,d$ and asking  whether the statement holds with $p_*(a,b,d)$ replaced by a smaller number. An alternative way to consider the question  is to look for measures which satisfy (A), (B), and (R) for $p < p_{*}(a,b,d)$.  For some results in this vein, see Shmerkin and Suomala \cite{shmerkin-suomala}, Chen and Seeger \cite{chen-seeger}, {\L}aba and Wang \cite{laba-wang}, and Ryou \cite{Ryou-2023}. 
\end{remark}

\subsection{Key Ideas in the Proof of Theorem \ref{mainthm3}}\label{ideas}

The key to failure of restriction estimates in Knapp's example is \textit{constructive interference}. 
The smoothness of hypersurfaces guarantees that a $\delta$-neighborhood of a point on the hypersurface will be contained in a $\delta \times \cdots \times \delta \times \delta^2$-rectangle oriented along the tangent plane to the hypersurface. 
The uncertainty principle (or a direct calculation) implies that 
the Fourier transform of the indicator function of this $\delta$-ball should therefore be $\gtrsim \delta^{d-1}$ in a dual rectangle with side lengths $\delta^{-1} \times \cdots \times \delta^{-1} \times \delta^{-2}$. 

When studying restriction problems for \textit{fractal sets}, the easiest way to achieve constructive interference is to choose the indicator function of a neighborhood of an arithmetic progression. For simplicity, we will consider the case $d = 1$. In fact, the indicator function of a $\delta$-neighborhood of an arithmetic progression with common difference $z$ will have large Fourier transform near frequencies of the form $\xi = n z, n \in \mathbb{Z}, |n| \leq \delta^{-1}$. This observation is used by Hambrook and {\L}aba \cite{HL2013} and a later work by Chen \cite{chen} to provide a random Cantor measure for which the exponent in (R) cannot be improved beyond $p_*(a,b,d)$.

In order to construct deterministc examples of fractal sets allowing for constructive interference, 
it is useful to have an example of a set supporting measures satisfying (A) and (B) for 
prescribed 
exponents $a$ and $b$ that also contains long arithmetic progressions. 
A natural choice for this set is the set $E(\tau)$ provided by Kaufman \cite{Kaufman}. This set is a good choice for two reasons: 
first, it is easy to locate a compact subset $F$ of $E(\tau)$ for which fattenings of $F$ contain long arithmetic progressions; 
second, the limsup structure of the set $E(\tau)$ allows for the insertion of arithmetic progressions of different length if necessary (it turns out to be necessary in the case where $a \neq b$). 
These observations are central to the previous work of Fraser, Hambrook and Ryou \cite{fraser-hambrook-ryou} 
locating measures on $E(\tau)$ for which the exponent appearing in (R) cannot be improved beyond $p_*(a,b,d)$. 

In the same spirit, it is natural to seek measures on the set $E(K,B,\tau)$ as possible counterexamples to (R) for $p < p_*(a,b,d)$. 
First, the set $E(K,B,\tau)$ 
supports measures satisfying (A) and (B) for prescribed  $a$ and $b$; 
second, it is easy to locate a compact subset of $E(K,B,\tau)$ is naturally concentrated around a small neighborhood of a small number of lattices; third, because $E(K,B,\tau)$ is a limsup set, it is easy to insert a generalized arithmetic progression of a desired length in order to obtain a counterexample if $a \neq b$.

Kaufman \cite{Kaufman} constructed a measure on $E(\tau)$ of maximal Fourier decay. In 
\cite{fraser-hambrook-ryou}, Fraser, Hambrook, and Ryou 
modified Kaufman's construction (and Papadimitropoulos's construction \cite{pa-thesis}) so that the measure satisfied (A) and (B) for prescribed values of $a$ and $b$ and failed $(R)$ for $p<p_*(a,b,d)$. As alluded to above, inserting (and modifying the weight on) a generalized arithmetic progression is the key to establishing the failure of (R). 

Hambrook and Fraser \cite{hambrook-fraser-Rn} constructed a measure on 
$E_{\prin}(K,B,\tau)$ 
with  maximal Fourier decay. 
The proof of Theorem \ref{mainthm3} 
modifies 
the Hambrook and Fraser construction 
using the ideas of 
\cite{fraser-hambrook-ryou}. 
The method used to establish the necessary Fourier decay estimate in Theorem \ref{mainthm3}  is essentially the same as the method in  \cite{hambrook-fraser-Rn}. 
However, the set $E(K,B,\tau)$ in Theorem \ref{mainthm3} is defined using ideals while the set $E_{\prin}(K,B,\tau)$ in \cite{hambrook-fraser-Rn} is defined using elements. 
This change streamlines the argument in the present paper in some ways (less explicit reliance on the basis and a simpler proof of the divisor bound). 
However, it required some additional technology (the notion of the different ideal of a number field and a more abstract exponential sum calculation). 
The regularity property in Theorem \ref{mainthm3} was not considered in \cite{hambrook-fraser-Rn}. 
The argument in the present paper generalizes the argument of Papadimitropoulos \cite{pa-thesis} to the number field setting. 
The key ingredient is the separation lemma (Lemma \ref{separation lemma}) and its application in Section \ref{regularity_section}. 
As \cite{fraser-hambrook-ryou} (and in contrast to \cite{pa-thesis}) we use a Fourier-analytic argument in proof of regularity (see Step 2 in the proof of Lemma \ref{k-ball regularity}). 
The failure of the estimate (R) is proved by an argument similar to that used in \cite{fraser-hambrook-ryou}. 
However, the argument requires more advanced ideas due to the more general setting. The role of the arithmetic progression at each scale is played by the inverse of a prime ideal. 
As in \cite{fraser-hambrook-ryou}, a Chinese remainder theorem argument is used. However, it is slightly more exotic (though not deeper) than the Chinese remainder theorem found in a typical text on ring theory. In general, the additional abstraction required by the proof in the present leads to an overall streamlining of the proof compared to the one in \cite{hambrook-fraser-Rn}. While the argument is more difficult (due to its generality), it is more efficient.

\section{Algebraic Number Theory}\label{Algebraic Number Theory: Basics section}

In this section, we give the definitions and facts from algebraic number theory necessary for the proof of our main theorem. As a reference for this material, see \cite{Jarvis14}. 
We also prove some lemmas that we will need.

Recall that $K$ is a number field of degree $d$, $\OK$ is its ring of integers, and $B = \cbr{\omega_1,\ldots,\omega_d}$ is an integral basis for $K$.

Let $\mathcal{E}_K$ denote the set of all injective homomorphisms 
from $K$ into $\mathbb{C}$. There are precisely $d$ such homomorphisms. 
For each $q \in K$, the field norm of $q$ is 
\begin{align}\label{field norm formula}
N(q) 
=
\prod_{\tau \in \mathcal{E}_K} \tau(q).
\end{align}
and the field trace of $q$ is 
\begin{align}\label{field trace formula}
\tr(q) 
=
\sum_{\tau \in \mathcal{E}_K} \tau(q).
\end{align}
The field norm is multiplicative: 
$N(ab)=N(a)N(b)$ for all $a,b \in K$. 
The field trace is additive: 
$\tr(a+b)=\tr(a) + \tr(b)$ for all $a,b \in K$.

\begin{lemma}\label{C1 bounds lemma}
For the integral basis $B = \cbr{\omega_1,\ldots,\omega_d}$, define 
\begin{align*}%\label{defn of C1}
C_B = \max\cbr{ \textstyle \rbr{\sum_{i=1}^d |\tau(\omega_i)|^2}^{1/2} : \tau \in \mathcal{E}_K }.
\end{align*}
For every $q \in K$, 
\begin{align}\label{C1 norm bound}
|N(q)| \leq C_B^d |q|^d.
\end{align}
\end{lemma}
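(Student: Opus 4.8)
The plan is to write $|N(q)|$ out via the product formula \eqref{field norm formula} and bound each factor $|\tau(q)|$ using the integral basis. Write $q = \sum_{i=1}^d q_i \omega_i$ with $q_i \in \QQ$; then for each $\tau \in \mathcal{E}_K$ we have $\tau(q) = \sum_{i=1}^d q_i \tau(\omega_i)$ by linearity of $\tau$ over $\QQ$. Applying the Cauchy--Schwarz inequality to this sum gives
\[
|\tau(q)| \leq \rbr{\sum_{i=1}^d |q_i|^2}^{1/2} \rbr{\sum_{i=1}^d |\tau(\omega_i)|^2}^{1/2} = |q| \rbr{\sum_{i=1}^d |\tau(\omega_i)|^2}^{1/2} \leq C_B |q|,
\]
where the middle equality uses the identification of norms on $K$ with Euclidean norms on $\RR^d$ (as set up in the introduction), and the last inequality is the definition of $C_B$.

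Next I would combine these bounds. Since $N(q) = \prod_{\tau \in \mathcal{E}_K} \tau(q)$ is a product of exactly $d$ terms, taking absolute values and applying the bound $|\tau(q)| \leq C_B|q|$ to each factor yields
\[
|N(q)| = \prod_{\tau \in \mathcal{E}_K} |\tau(q)| \leq \prod_{\tau \in \mathcal{E}_K} C_B |q| = C_B^d |q|^d,
\]
which is exactly \eqref{C1 norm bound}. This completes the proof.

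There is essentially no obstacle here — the only point requiring a little care is making sure the two notions of absolute value agree: the $|q|$ appearing in the statement is the Euclidean norm of the coefficient vector $(q_1,\ldots,q_d)$, and this is precisely the quantity that appears after Cauchy--Schwarz, so the identification set up in the introduction is what makes the clean statement possible. One could alternatively phrase the whole argument purely in terms of $\RR^d$ without mentioning the field structure, treating each $\tau$ as a fixed linear functional with coefficient vector $(\tau(\omega_1),\ldots,\tau(\omega_d))$, but invoking \eqref{field norm formula} directly is cleaner.
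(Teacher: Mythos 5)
Your proof is correct and follows essentially the same path as the paper's: write $\tau(q) = \sum_i q_i \tau(\omega_i)$, apply Cauchy--Schwarz to get $|\tau(q)| \leq C_B |q|$ for each embedding $\tau$, and multiply over the $d$ embeddings via \eqref{field norm formula}. Your version just spells out a couple of steps that the paper leaves implicit.
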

\begin{proof}
If $q = \sum_{i=1}^d q_i \omega_i \in K$ and $\tau \in \mathcal{E}_K$, then 
$\tau(q) = \sum_{i=1}^d q_i \tau(\omega_i) \in \CC$.  
The Cauchy-Schwarz inequality implies $|\tau(q)| \leq C_B|q|$, so that \eqref{field norm formula} implies \eqref{C1 norm bound}.
\end{proof}

\begin{lemma}
For all $a,b \in K$, $|ab| \lesssim_B |a||b|$.
\end{lemma}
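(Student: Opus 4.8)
The plan is to exploit the structure constants of $\OK$ relative to the integral basis $B$. Since $B=\cbr{\omega_1,\ldots,\omega_d}$ is an integral basis, each product $\omega_i\omega_j$ lies in $\OK$ and hence can be expanded as $\omega_i\omega_j=\sum_{k=1}^{d} c_{ijk}\omega_k$ with $c_{ijk}\in\ZZ$; these integers are fixed once $B$ is chosen, so any constant built from them qualifies as depending only on $B$. Given $a=\sum_i a_i\omega_i$ and $b=\sum_j b_j\omega_j$ in $K$, bilinearity of multiplication in $K$ gives $ab=\sum_{k}\rbr{\sum_{i,j} c_{ijk} a_i b_j}\omega_k$, so the $k$-th coordinate of $ab$ (with respect to $B$) is $\sum_{i,j} c_{ijk} a_i b_j$.

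First I would bound each coordinate by the Cauchy--Schwarz inequality: $\abs{\sum_{i,j} c_{ijk} a_i b_j}^2 \le \rbr{\sum_{i,j} c_{ijk}^2}\rbr{\sum_{i,j} a_i^2 b_j^2} = \rbr{\sum_{i,j} c_{ijk}^2}\,|a|^2\,|b|^2$, where the last equality uses $\sum_{i,j} a_i^2 b_j^2 = \rbr{\sum_i a_i^2}\rbr{\sum_j b_j^2}$. Summing over $k=1,\ldots,d$ then yields $|ab|^2 \le \rbr{\sum_{i,j,k} c_{ijk}^2}\,|a|^2\,|b|^2$, so the lemma holds with implied constant $\rbr{\sum_{i,j,k} c_{ijk}^2}^{1/2}$, which depends only on $B$.

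There is no real obstacle here; the only point requiring attention is the observation that products of basis elements have \emph{integer} coordinates, which is precisely the defining property of an integral basis and is what keeps the constant a function of $B$ alone. One could also argue more abstractly --- multiplication is a bilinear map on the finite-dimensional real vector space spanned by $B$ inside $\RR^d$, hence bounded, or equivalently left multiplication by $a$ is a linear endomorphism whose matrix entries (in the basis $B$) are linear in the coordinates of $a$, so its operator norm is $\lesssim_B |a|$ --- but the explicit structure-constant computation above is the cleanest route and gives an honest constant.
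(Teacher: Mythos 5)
Your proposal is essentially the paper's own proof: expand $a$, $b$, and the products $\omega_i\omega_j$ in the basis $B$ to get structure constants $c_{ijk}$, and apply Cauchy--Schwarz coordinate-wise. (Your observation that the $c_{ijk}$ are integers is correct but unneeded; the paper only records $c_{ijk}\in\QQ$, which already suffices since the constant is allowed to depend on $B$.)
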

\begin{proof}
Write $a = \sum_{i=1}^{d} a_i \omega_i$, 
$b = \sum_{j=1}^{d} b_j \omega_j$, and $\omega_i \omega_j = 
\sum_{k=1}^{d} c_{ijk} \omega_k$, where $a_i, b_j, c_{ijk} \in \QQ$. 
%Then $ab = \sum_{k}\sum_{i} \sum_{j} a_i b_j c_{ijk} \omega_k$. 
By the Cauchy-Schwarz inequality, 
$$
|ab|^2 
= 
\sum_{k}  \abs{\sum_i \sum_j a_i b_j c_{ijk} }^2
\lesssim_B \rbr{\sum_i |a_i|^2} \rbr{\sum_j |b_j|^2}
= |a|^2|b|^2. 
$$
\end{proof}

An ideal of $\OK$ is an additive subgroup $I$ of $\OK$ such that $rI = \cbr{ra:a \in I} \subseteq I$ for all $r \in \OK$. A fractional ideal of $\OK$ is a subset $J \subseteq K$ of the form 
$$J = \frac{1}{r}I = \cbr{\dfrac{1}{r}a : a \in I},$$
where $I$ is an ideal of $\OK$ and $r$ is a non-zero element of $\OK$  (neither $I$ nor $r$ are uniquely determined by $J$).  
Every ideal of $\OK$ is a fractional ideal of $\OK$, and every fractional ideal of $\OK$ is an $\OK$-submodule of $K$. 
A fractional ideal of $\OK$ is an ideal of $\OK$ if and only if it is contained in $\OK$. The principal fractional ideal generated by the element $q \in K$ is $\abr{q} = q\OK$. 

The product of two fractional ideals $I$ and $J$ is the fractional ideal
$$
IJ = \cbr{\sum_{i=1}^{n} a_i b_i : a_i \in I, b_i \in J, n \in \NN}. 
$$
The non-zero fractional ideals of $\OK$ form a commutative group under multiplication.  The multiplicative identity is $\OK$. The inverse of a non-zero fractional ideal $J$ is 
$$
J^{-1} = \cbr{a \in K : aJ \subseteq \OK }. 
$$
The group is partially ordered by inclusion
: $I \subseteq J$ implies $HI \subseteq HJ$ for all non-zero fractional ideals $H,I,J$.  
Consequently, if $I \subseteq \OK \subseteq J$, then $J^{-1} \subseteq \OK \subseteq I^{-1}$.

A non-zero proper ideal $P$ of $\OK$ is called prime if 
$ab \in \OK \setminus P$ whenever $a,b \in \OK \setminus P$. 
It is not generally true that every non-zero, non-unit element of $\OK$ can be uniquely factored into a product of prime elements. 
However, every non-zero proper ideal of $\OK$ can be uniquely factored 
as a product of prime ideals.

If $J = \frac{1}{r}I$ is a fractional ideal of $\OK$, where $I$ is an ideal of $\OK$ and $r$ is a non-zero element of $\OK$, the ideal norm of $J$ is defined to be 
\begin{align*}%\label{ideal norm fractional} 
N(J) = \dfrac{|\OK/I|}{|\OK/\abr{r}|},    
\end{align*}
which is finite whenever $J$ is non-zero. 
It can be shown that this definition does not depend on the choice of $I$ and $r$. Moreover, the ideal norm is multiplicative: 
For all fractional ideals $J_1,J_2$ of $\OK$, 
$$N(J_1 J_2) = N(J_1)N(J_2).$$

\begin{lemma}\label{idealquotient}
If $I_2 \subseteq I_1$ are fractional ideals of $\OK$,  
then  $N(I_2) = N(I_1)|I_1/I_2|$. 
\end{lemma}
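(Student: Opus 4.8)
If $I_2 \subseteq I_1$ are fractional ideals of $\OK$, then $N(I_2) = N(I_1)|I_1/I_2|$.

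Let me think about how to prove this.\textbf{Proof plan.} The plan is to reduce the statement to the case of integral ideals, where it becomes the multiplicativity of indices in a tower of subgroups.

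First I would clear denominators. Since $I_1$ is a fractional ideal, by definition $I_1 = \frac{1}{s}H$ for some integral ideal $H$ and some nonzero $s \in \OK$, so $r := s$ satisfies $rI_1 \subseteq \OK$; and since $I_2 \subseteq I_1$ we also get $rI_2 \subseteq rI_1 \subseteq \OK$. Thus $J_1 := rI_1$ and $J_2 := rI_2$ are nonzero integral ideals with $J_2 \subseteq J_1$, and $I_j = \frac{1}{r}J_j$ for $j = 1,2$. Applying the definition of the ideal norm with this choice of representation,
\[
\frac{N(I_2)}{N(I_1)} = \frac{|\OK/J_2|\,/\,|\OK/\abr{r}|}{|\OK/J_1|\,/\,|\OK/\abr{r}|} = \frac{|\OK/J_2|}{|\OK/J_1|},
\]
where $|\OK/J_2|$ is finite because $J_2$ is a nonzero integral ideal, so the quotient makes sense.

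Next I would identify the two factors on the right. The map $x \mapsto rx$ is an injective $\OK$-module homomorphism $K \to K$ (injective since $K$ is a field and $r \neq 0$) whose image of $I_j$ is exactly $rI_j = J_j$; hence it induces an isomorphism of abelian groups $I_1/I_2 \cong J_1/J_2$, so $|I_1/I_2| = |J_1/J_2|$ (in particular $|I_1/I_2|$ is finite, being bounded by $|\OK/J_2|$). Finally, the tower law for indices of subgroups, applied to $J_2 \subseteq J_1 \subseteq \OK$, gives $|\OK/J_2| = |\OK/J_1|\cdot|J_1/J_2|$, i.e. $|\OK/J_2|/|\OK/J_1| = |J_1/J_2| = |I_1/I_2|$. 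Combining this with the displayed identity yields $N(I_2) = N(I_1)\,|I_1/I_2|$.

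There is no serious obstacle here. The only points requiring a little care are: the simultaneous clearing of denominators (immediate from the definition of fractional ideal applied to $I_1$, using $I_2 \subseteq I_1$); the verification that $rI_j$ is again an integral ideal (so that the norm of $J_j$ is $|\OK/J_j|$), which uses that $I_j$ is an $\OK$-submodule of $K$; and keeping track that all quotients in sight are finite so that the arithmetic of indices is legitimate. (Alternatively, one could bypass the explicit computation of $N(I_j)$ by invoking the multiplicativity $N(\abr{r}I_j) = N(\abr{r})N(I_j)$ recorded above, but the direct argument is just as short.)
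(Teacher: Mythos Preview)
Your proof is correct and follows essentially the same route as the paper: clear denominators by multiplying by a nonzero $r\in\OK$, apply the tower law $|\OK/rI_2|=|\OK/rI_1|\cdot|rI_1/rI_2|$, and identify $|rI_1/rI_2|=|I_1/I_2|$ via multiplication by $r$. The only cosmetic difference is that the paper phrases the first step using multiplicativity $N(\abr{r})N(I_j)=N(rI_j)$, whereas you invoke the definition of $N(I_j)$ directly with the representation $I_j=\frac{1}{r}(rI_j)$; you even note this alternative yourself.
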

\begin{proof}
Choose $r \in \OK \setminus \cbr{0}$ such that $rI_2 \subseteq rI_1 \subseteq \OK$. 
Then 
\begin{align*}
N(\abr{r}) N(I_2)
&=
N(r I_2)
=
|\OK/rI_2|
=
|\OK/rI_1| \cdot |rI_1/rI_2| \\
&=
N(r I_1) |rI_1/rI_2|
=
N(r I_1) |I_1/I_2| 
=
N(\abr{r}) N(I_1) |I_1/I_2|. 
\end{align*}
\end{proof}

The field norm and ideal norm are related as follows:
For all $q \in K$, 
\begin{align}\label{field-norm-ideal-norm-formula}
|N(q)| = N(\abr{q}).  
\end{align}
This fact combines with Lemmas \ref{C1 bounds lemma} and \ref{idealquotient} to give the following separation lemma. 

\begin{lemma}\label{separation lemma}
Let $I_1$ and $I_2$ be non-zero fractional ideals of $\OK$. Let $r_1 \in I_1^{-1}$ and $r_2 \in I_2^{-1}$ with $r_1 \neq r_2$. Then 
$$
C_B|r_1 - r_2| 
\geq (N(I_1 \cap I_2))^{-1/d}
\geq (N(I_1) N(I_2))^{-1/d}. 
$$
\end{lemma}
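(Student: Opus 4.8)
The plan is to show that the difference $r_1 - r_2$ lies in a suitable fractional ideal whose norm we can bound, and then convert this into a lower bound on $|r_1 - r_2|$ via Lemma \ref{C1 bounds lemma}. First I would observe that $r_1 \in I_1^{-1}$ and $r_2 \in I_2^{-1}$, and since $I_1 \cap I_2 \subseteq I_j$ for $j = 1, 2$, the inclusion-reversing property of inverses (noted in the text: $I \subseteq J$ implies $J^{-1} \subseteq I^{-1}$ for the relevant fractional ideals, or more directly $(I_1 \cap I_2)^{-1} \supseteq I_j^{-1}$) gives $r_1, r_2 \in (I_1 \cap I_2)^{-1}$. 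Hence $r_1 - r_2 \in (I_1 \cap I_2)^{-1}$, and since $r_1 \neq r_2$, the element $q := r_1 - r_2$ is a nonzero element of this fractional ideal.

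Next I would relate $|N(q)|$ to $N(I_1 \cap I_2)$. Set $J = I_1 \cap I_2$, so $q \in J^{-1}$, i.e., $q J \subseteq \OK$, equivalently $\abr{q} \subseteq J^{-1}$, equivalently $\abr{q} J \subseteq \OK$. Actually the cleanest route: $\abr{q} \subseteq J^{-1}$ means $\abr{q}$ and $J^{-1}$ are nested fractional ideals, so by Lemma \ref{idealquotient}, $N(\abr{q}) = N(J^{-1}) |J^{-1}/\abr{q}| \geq N(J^{-1}) = N(J)^{-1}$, where the last equality uses multiplicativity of the ideal norm ($N(J) N(J^{-1}) = N(\OK) = 1$). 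Combining with \eqref{field-norm-ideal-norm-formula}, $|N(q)| = N(\abr{q}) \geq N(J)^{-1} = (N(I_1 \cap I_2))^{-1}$. Then Lemma \ref{C1 bounds lemma} applied to $q = r_1 - r_2$ gives $C_B^d |r_1 - r_2|^d \geq |N(r_1 - r_2)| \geq (N(I_1 \cap I_2))^{-1}$, and taking $d$-th roots yields the first inequality $C_B |r_1 - r_2| \geq (N(I_1 \cap I_2))^{-1/d}$.

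For the second inequality, $(N(I_1 \cap I_2))^{-1/d} \geq (N(I_1) N(I_2))^{-1/d}$, I would note that $I_1 \cap I_2 \subseteq I_1 I_2$ is false in general, but $I_1 I_2 \subseteq I_1 \cap I_2$ holds (the product is contained in each factor since ideals absorb multiplication, hence in the intersection). Therefore by the inclusion property and multiplicativity, $N(I_1 I_2) = N(I_1 \cap I_2) \cdot |(I_1 \cap I_2)/(I_1 I_2)|$ via Lemma \ref{idealquotient}, wait — more carefully, nesting $I_1 I_2 \subseteq I_1 \cap I_2$ gives $N(I_1 I_2) = N(I_1 \cap I_2) |(I_1 \cap I_2)/(I_1 I_2)| \geq N(I_1 \cap I_2)$, so $N(I_1 \cap I_2) \leq N(I_1 I_2) = N(I_1) N(I_2)$, which upon inverting and taking $d$-th roots gives the desired bound.

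The routine parts are the ideal-theoretic manipulations; the only point requiring slight care is making sure the norm of an arbitrary (not necessarily integral) fractional ideal behaves well — specifically that $N(J^{-1}) = N(J)^{-1}$ and that Lemma \ref{idealquotient} applies to nested fractional ideals rather than just integral ideals, both of which are available from the discussion preceding the statement. I do not anticipate a genuine obstacle here; the lemma is essentially a packaging of Lemma \ref{C1 bounds lemma}, Lemma \ref{idealquotient}, and multiplicativity of the ideal norm, and the main task is to route the inclusions $I_1 I_2 \subseteq I_1 \cap I_2 \subseteq I_j$ and their inverses correctly.
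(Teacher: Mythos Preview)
Your proposal is correct and follows essentially the same route as the paper's proof: both arguments show $\langle r_1 - r_2\rangle (I_1 \cap I_2) \subseteq \mathcal{O}_K$ (you phrase this as $r_1 - r_2 \in (I_1\cap I_2)^{-1}$, which is equivalent), apply Lemma~\ref{idealquotient} together with \eqref{field-norm-ideal-norm-formula} and Lemma~\ref{C1 bounds lemma} for the first inequality, and deduce the second from $I_1 I_2 \subseteq I_1 \cap I_2$ via Lemma~\ref{idealquotient} and multiplicativity of the norm. The only differences are cosmetic (you multiply by $J^{-1}$ where the paper multiplies by $J$), so nothing further is needed.
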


\begin{proof}
Since $I_1 I_2 \subseteq I_1 \cap I_2$, Lemma \ref{idealquotient} implies $N(I_1 I_2) \geq N(I_1 \cap I_2)$, which gives the second inequality. Now we prove the first inequality.
We have $r_1 (I_1 \cap I_2) \subseteq r_1 I_1 \subseteq \OK$ 
and 
$r_2 (I_1 \cap I_2) \subseteq r_2 I_2 \subseteq \OK$. 
Thus $\abr{r_1 - r_2}(I_1 \cap I_2) \subseteq \OK$. Now Lemma \ref{idealquotient} implies 
$N(\abr{r_1 - r_2})N(I_1 \cap I_2) \geq N(\OK) = 1$. 
The desired result follows from Lemma \ref{C1 bounds lemma} and \eqref{field-norm-ideal-norm-formula}.
\end{proof}

We now define the different ideal $\delta$ of $\OK$.  
The set 
$$
\{ x \in K : \tr(xy) \in \ZZ \ \text{ for all } y \in \OK \}
$$ 
is a fractional ideal of $\OK$. 
The different ideal $\delta$ is the \textit{inverse} of this fractional ideal. In other words, $\delta^{-1}$ is equal to the fractional ideal above. Note that $\delta \subseteq \OK \subseteq \delta^{-1}$.

Recall that the integral basis is $B = \cbr{\omega_1,\ldots,\omega_d}$. % 
Define the $d \times d$ matrix 
$$T =
%[T_{ij}] = 
[\tr(\omega_i \omega_j)].$$ 
Since $(x,y) \mapsto \tr(xy)$ is a non-degenerate bilinear form on $\QQ^d \isom K$, $T$ is invertible and 
the dot product is related to the trace by 
$$\tr(r(T^{-1}s)) = r \cdot s \text{ for all $r,s \in \QQ^d \isom K$}.$$
It follows that $T^{-1}(\ZZ^d) = \delta^{-1}$.

For $u \in \RR$, define 
$$
e(u)=e^{-2\pi i u}. 
$$

\begin{lemma}\label{NEWGeomSeriesProp}
Let $I$ be a non-zero ideal in $\OK$ and let $R(I) \subseteq I^{-1}$ be a complete set of representatives of $ I^{-1} / \OK$. Let $s \in \OK \isom \ZZ^d$. Then 
\begin{align*}
\sum_{r \in R(I)} e(r \cdot s)
=
\left\{
\begin{array}{ll}
N(I) & \text{if $s \in T(\delta^{-1} I) $}  \\
0       & \text{if $s \notin T(\delta^{-1} I) $.} 
\end{array}
\right.
\end{align*}
\end{lemma}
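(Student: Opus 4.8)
The plan is to read the left-hand side as a character sum over the finite abelian group $G = I^{-1}/\OK$ and to apply the orthogonality relations for characters.

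First I would record the two structural facts. Since $I \subseteq \OK$ (as $I$ is an integral ideal), we have $\OK \subseteq I^{-1}$, so $G = I^{-1}/\OK$ makes sense as an additive quotient group; applying Lemma \ref{idealquotient} to $\OK \subseteq I^{-1}$ gives $N(\OK) = N(I^{-1})\,|I^{-1}/\OK|$, and combined with $N(I)N(I^{-1}) = N(\OK) = 1$ this yields $|G| = N(I)$, in particular $G$ is finite. Next I would check that $\chi_s(r) := e(r \cdot s)$ descends to a well-defined homomorphism $\chi_s : G \to \CC^\times$: if $n \in \OK$ then, under the basis identification, both $n$ and $s$ correspond to vectors in $\ZZ^d$, so $n \cdot s \in \ZZ$ and hence $e(n\cdot s) = 1$; thus $\chi_s(r+n) = \chi_s(r)e(n\cdot s) = \chi_s(r)$, and additivity of $r \mapsto r\cdot s$ makes $\chi_s$ a character of $G$. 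Since $R(I)$ meets each coset of $\OK$ in $I^{-1}$ exactly once, $\sum_{r \in R(I)} e(r\cdot s) = \sum_{g \in G} \chi_s(g)$, and the standard orthogonality relation (if $\chi_s$ is nontrivial, pick $h \in G$ with $\chi_s(h) \neq 1$; then $\chi_s(h)\sum_g \chi_s(g) = \sum_g \chi_s(hg) = \sum_g \chi_s(g)$ forces the sum to vanish; if $\chi_s$ is trivial the sum is $|G| = N(I)$) reduces the lemma to identifying when $\chi_s$ is trivial.

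The remaining and essential point is: $\chi_s$ is trivial iff $s \in T(\delta^{-1}I)$. Now $\chi_s \equiv 1$ means $r\cdot s \in \ZZ$ for every $r \in I^{-1}$. Using the identity $r\cdot s = \tr\!\big(r\,(T^{-1}s)\big)$ recorded before the lemma, and writing $t = T^{-1}s$ (note $t \in T^{-1}(\ZZ^d) = \delta^{-1}$, consistent with $\delta^{-1}I \subseteq \delta^{-1}$ since $I$ is integral), triviality of $\chi_s$ is equivalent to $\tr(rt) \in \ZZ$ for all $r \in I^{-1}$. I would then prove the trace–duality identity
\[
\{\, t \in K : \tr(t\,I^{-1}) \subseteq \ZZ \,\} = \delta^{-1}I.
\]
For this, for any nonzero fractional ideal $J$ one has $tJ \subseteq \delta^{-1}$ iff $\tr\big((tJ)\OK\big) \subseteq \ZZ$ (by the defining property of $\delta^{-1}$ as the trace-dual of $\OK$), and since $J$ is an $\OK$-module the condition $\tr(tJ) \subseteq \ZZ$ already forces $\tr(tjn) \in \ZZ$ for all $j \in J$, $n \in \OK$, so $\tr\big((tJ)\OK\big) \subseteq \ZZ \iff \tr(tJ) \subseteq \ZZ$; on the other hand $tJ \subseteq \delta^{-1}$ means exactly $t \in \delta^{-1}J^{-1}$. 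Taking $J = I^{-1}$ and using that the nonzero fractional ideals form a group gives $\delta^{-1}(I^{-1})^{-1} = \delta^{-1}I$, as claimed. Hence $\chi_s$ is trivial iff $t = T^{-1}s \in \delta^{-1}I$, i.e.\ iff $s \in T(\delta^{-1}I)$, which is precisely the stated dichotomy.

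I expect the only genuine content — and the step to be most careful with — is the trace-duality identity $\{t : \tr(tI^{-1}) \subseteq \ZZ\} = \delta^{-1}I$, where one must carefully pass between the pointwise condition ``$\tr(tj) \in \ZZ$ for all $j$'' and membership in the codifferent $\delta^{-1}$, and use the integrality of $I$ together with the group structure of fractional ideals. Everything else (finiteness of $G$, well-definedness of $\chi_s$ via $n\cdot s \in \ZZ$, the orthogonality relation) is routine.
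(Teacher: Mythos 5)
Your proof is correct and follows essentially the same route as the paper's: both reduce the sum to the question of when $r \mapsto e(r\cdot s)$ is trivial on $I^{-1}/\OK$, both compute $|I^{-1}/\OK| = N(I)$ via Lemma \ref{idealquotient}, and both translate triviality into the trace condition $\tr\bigl((T^{-1}s)\,I^{-1}\bigr) \subseteq \ZZ$ and hence membership of $T^{-1}s$ in $\delta^{-1}I$. Your use of character orthogonality is just the standard repackaging of the paper's pigeonhole/bijection argument with the shift $r_0 + R(I)$, so the content is the same.
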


\begin{proof}
For any fractional ideal $J$, we have $J=\OK J$, and hence (as is easily verified)  $J \subseteq \delta^{-1}$ if and only if $\tr(z) \in \ZZ$ for all $z \in J$. 
Taking 
$J = (T^{-1} s) I^{-1}$, it follows that $s \in T(\delta^{-1}I)$ if and only if $r \cdot s =  \tr(r (T^{-1} s)) \in \ZZ$ for all $r \in I^{-1}$.

Therefore, if $s \in T(\delta^{-1} I)$, we have  
\[
\sum_{r \in R(I)} e(r \cdot s) = \sum_{r \in R(I)} 1 = |R(I)|.
\]
But, by Lemma \ref{idealquotient}, 
\[
|R(I)| = |I^{-1} / \OK| 
= \dfrac{N(\OK)}{N(I^{-1})} 
%= \dfrac{N(I)N(I^{-1}) }{ N(I^{-1}) }
= N(I).
\]

Now assume $s \notin T(\delta^{-1} I)$. 
Then there exists an $r_0 \in I^{-1}$ such that $r_0 \cdot s = \tr(r_0(T^{-1}s)) \notin \ZZ$, and hence $e(r_0 \cdot s) \neq 1$. 
By pigeonholing, $r_0 + R(I)$ is also a complete representatives of $I^{-1}/\OK$.
So there is a bijection $f:R(I) \to r_0 + R(I)$ such that $r - f(r) \in \OK \isom \ZZ^d$ for all $r \in R(I)$. 
Since $r \mapsto e(r \cdot s)$ is $\ZZ^d$-periodic, 
\[
\begin{split}
\sum_{r \in R(I)} e(r \cdot s)  
= \sum_{r \in R(I)} e(f(r) \cdot s)
= e(r_0 \cdot s) \sum_{r \in R(I)} e(r \cdot s).
\end{split}
\]
Since $e(r_0 \cdot s) \neq 1$, 
the sum must be zero.
\end{proof}

We shall need the following version of the Chinese remainder theorem. 

\begin{lemma}\label{concrete crt lemma}
Suppose $I_1$, $I_2$, and $D$ are fractional ideals of $\OK$. 
Suppose $a_1, a_2 \in D$. 
\begin{enumerate}[(a)]
\item If $a_1 - a_2 \notin D(I_1 + I_2)$, then
\[(a_1 + DI_1) \cap (a_2 + DI_2) = \emptyset.\]
\item If $a_1 - a_2 \in D(I_1 + I_2)$, then there exists $a \in D$ such that 
$$
(a_1 + DI_1) \cap (a_2 + DI_2) = a + (D I_1 \cap DI_2).
$$
\end{enumerate}
\end{lemma}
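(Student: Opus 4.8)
The plan is to reduce both parts to a statement about cosets of the subgroup $DI_1 \cap DI_2$ inside the additive group $D(I_1+I_2)$ (or inside $K$). Note first that $DI_1$, $DI_2$, $DI_1 \cap DI_2$, and $D(I_1+I_2) = DI_1 + DI_2$ are all additive subgroups of $K$, with $DI_1 \cap DI_2 \subseteq DI_j \subseteq DI_1 + DI_2$. The key elementary fact I would use is the second isomorphism theorem for abelian groups: the natural map $DI_1 \to (DI_1+DI_2)/DI_2$ is a surjective homomorphism with kernel $DI_1 \cap DI_2$.

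For part (a): an element of $(a_1 + DI_1)\cap(a_2+DI_2)$ would be of the form $a_1 + x_1 = a_2 + x_2$ with $x_j \in DI_j$, forcing $a_1 - a_2 = x_2 - x_1 \in DI_2 + DI_1 = D(I_1+I_2)$, contrary to hypothesis. So the intersection is empty. This is immediate and not the obstacle.

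For part (b): assume $a_1 - a_2 \in D(I_1+I_2) = DI_1 + DI_2$, so write $a_1 - a_2 = y_2 - y_1$ with $y_j \in DI_j$; equivalently $a_1 + y_1 = a_2 + y_2 =: a$. Since $a_1 \in D$ and $y_1 \in DI_1 \subseteq D$ (as $I_1$ is a fractional ideal, $DI_1 \subseteq D\OK = D$ only if $I_1 \subseteq \OK$ — here I should be slightly careful: the statement only claims $a \in D$, and indeed $DI_j$ need not lie in $D$ unless $I_j \subseteq \OK$; but the lemma as stated asserts $a \in D$, so presumably in the intended application $I_1, I_2 \subseteq \OK$, or one simply notes $a = a_1 + y_1 \in D + DI_1$ and reads the claim accordingly). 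Granting this, $a$ lies in the claimed intersection since $a - a_1 = y_1 \in DI_1$ and $a - a_2 = y_2 \in DI_2$. Conversely, if $z \in (a_1+DI_1)\cap(a_2+DI_2)$, then $z - a = (z-a_1) - (a-a_1) \in DI_1$ and likewise $z - a \in DI_2$, so $z - a \in DI_1 \cap DI_2$, i.e. $z \in a + (DI_1 \cap DI_2)$. The reverse inclusion $a + (DI_1\cap DI_2) \subseteq (a_1+DI_1)\cap(a_2+DI_2)$ is equally direct: if $w \in DI_1 \cap DI_2$ then $a + w - a_1 = y_1 + w \in DI_1$ and $a + w - a_2 = y_2 + w \in DI_2$. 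This gives the displayed equality.

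The only genuinely delicate point — the "main obstacle," such as it is — is producing the witness $a$ and confirming it can be taken in $D$ (or in whatever group the statement really intends); everything else is coset bookkeeping. Concretely, I would first establish the purely group-theoretic identity
\[
(a_1 + H_1) \cap (a_2 + H_2) =
\begin{cases}
\emptyset & \text{if } a_1 - a_2 \notin H_1 + H_2,\\
a + (H_1 \cap H_2) & \text{if } a_1 - a_2 \in H_1 + H_2,
\end{cases}
\]
for any subgroups $H_1, H_2$ of an abelian group and any $a_1, a_2$, where in the second case $a = a_1 - h_1$ for any decomposition $a_1 - a_2 = h_2 - h_1$ with $h_j \in H_j$; then I would apply it with $H_j = DI_j$ and record that $a = a_1 - h_1 \in D$ since $h_1 \in DI_1$ and (in the setting of the lemma) $DI_1 \subseteq D$. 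The fact that $D(I_1 + I_2) = DI_1 + DI_2$ — which identifies the hypothesis in the two cases with $a_1 - a_2 \in H_1 + H_2$ — follows directly from the definition of the product of fractional ideals.
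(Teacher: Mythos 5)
Part (a) and the coset bookkeeping in part (b) are correct and straightforward, and you have correctly identified the one genuinely nontrivial point: producing a witness $a$ that lies in $D$. But that is exactly where your argument has a gap, and you flagged it yourself without closing it. Your construction $a = a_1 + y_1$ with $y_1 \in DI_1$ gives $a \in D$ only under the extra hypothesis $DI_1 \subseteq D$, i.e.\ $I_1 \subseteq \OK$ — which the lemma does not assume, and which fails in at least one of the paper's applications in spirit (the lemma is stated for arbitrary fractional ideals precisely so it can be applied flexibly). Your hedge ``presumably $I_1, I_2 \subseteq \OK$ in the intended application, or read the claim accordingly'' concedes the point rather than resolving it: the purely group-theoretic identity you invoke says nothing about where in $K$ the representative $a$ can be chosen, and different elements of the intersection coset lie in different places, so one cannot just ``read the claim accordingly.''

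The paper closes the gap with an ideal-theoretic partition of unity rather than an arbitrary splitting of $a_1 - a_2$. Since $I_j \subseteq I_1 + I_2$, one has $(I_1 + I_2)^{-1} I_j \subseteq \OK$, and
\[
(I_1 + I_2)^{-1} I_1 + (I_1 + I_2)^{-1} I_2 = (I_1 + I_2)^{-1}(I_1 + I_2) = \OK,
\]
so there exist $x_1 \in (I_1+I_2)^{-1}I_1$ and $x_2 \in (I_1+I_2)^{-1}I_2$ (both in $\OK$) with $x_1 + x_2 = 1$. Setting $a = x_1 a_2 + x_2 a_1$ immediately gives $a \in D$ because $x_1, x_2 \in \OK$, $a_1, a_2 \in D$, and $D$ is an $\OK$-module. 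That $a$ lies in the intersection then follows from $a - a_1 = (a_2 - a_1)x_1 \in D(I_1+I_2)\cdot(I_1+I_2)^{-1}I_1 = DI_1$, and symmetrically for $a_2$. After this, the coset equality is the same bookkeeping you did. So: your high-level reduction and your part (a) are fine, but to make part (b) correct in the stated generality you need to replace your choice of $a$ with this CRT-style construction (or supply some other argument that the coset $(a_1+DI_1)\cap(a_2+DI_2)$ actually meets $D$).
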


\begin{proof}
\mbox{}\\
\begin{enumerate}[(a)]
%\textbf{Proof of (a)}
    \item 
If $(a_1 + D I_1) \cap (a_2 + D I_2) \neq \emptyset$, then $a_1 + d_1 = a_2 + d_2$ for some $d_1 \in DI_1$ and $d_2 \in DI_2$, and so 
$$
a_1 - a_2 = d_2 - d_1 \in DI_1 + DI_2 = D(I_1 + I_2). 
$$
\item 
%\textbf{Proof of (b)} 
Note 
$$
(I_1 + I_2)^{-1}I_1 + (I_1 + I_2)^{-1}I_2 = (I_1 + I_2)^{-1} (I_1 + I_2) = \OK.$$ 
So there exist $x_1 \in (I_1 + I_2)^{-1}I_1$ and $x_2 \in (I_1 + I_2)^{-1}I_2$ such that $x_1+x_2=1$. 
Define $a = x_1a_2 + x_2a_1$. 
Suppose $a_1 - a_2 \in D(I_1 + I_2)$. 
Then 
\begin{align*}
a &= x_1a_2 + (1-x_1)a_1 = a_1 + (a_2 - a_1)x_1 
\\
&\in
a_1 + D(I_1 + I_2)(I_1 + I_2)^{-1}I_1
=
a_1 + DI_1. 
\end{align*}
A similar calculation shows that $a \in a_2 + DI_2$. The desired equality is now easily verified.
\end{enumerate}
\end{proof}

\section{Intermediate  Functions}\label{sec_Single-
Scale}

In this section, we define several functions necessary for the construction of the measure $\mu$. We also prove necessary Fourier-analytic estimates on these functions.

\subsection{The Functions \texorpdfstring{$\phi$}{phi} and \texorpdfstring{$\Phi$}{Phi}}

Fix $\phi: \RR^d \to \RR$ such that $\phi$ is $C^{\infty}$, $\phi \geq 0$, $\supp(\phi) \subseteq [-1,1]^d$, and 
$\widehat{\phi}(0) = \int \phi(x) dx = 1$. For each non-zero ideal $I$ of $\OK$ and each $\eta > 0$, define 
\begin{align}\label{Phi defn}
\Phi_{I,\eta}(x) = \sum_{r \in I^{-1}} \eta^{-d} \phi((x-r)/\eta). 
\end{align}
The sum converges uniformly on bounded subsets of $\RR^d$. 
In fact, since $\phi$ is compactly supported, for any bounded subset of $\RR^d$, only finitely many terms of the sum are non-zero on that subset. 
Note that $\eta^{-d} \phi ((x-r)/\eta)$ is an $L^1$-normalized bump function on the 
$\ell^{\infty}$-ball with radius $\eta$ and center $r$. 
Note also that $\Phi_{I,\eta}$ is $C^{\infty}$, non-negative, and $\ZZ^d$-periodic.

\begin{lemma}\label{Phi-Fourier-Lemma}
Let $I$ be a non-zero ideal of $\OK$, let $s \in \ZZ^d \isom \OK$, and let 
$\eta > 0$. Then 
\begin{equation}\label{Phi-Fourier-eqn-1}
\widehat{\Phi_{I,\eta}}(s) = \widehat{\phi}(\eta s) N(I)\mathbf{1}_{\delta^{-1}I} (T^{-1} s). 
\end{equation}
Moreover, if $0 < \eta < C_B^{-1}N(I)^{-1/d}$, then  
\begin{equation}\label{Phi-Fourier-eqn-2}
\widehat{\Phi_{I,\eta}^2}(s) = \eta^{-d} \widehat{\phi^2}(\eta s) N(I)\mathbf{1}_{\delta^{-1}I} (T^{-1} s).
\end{equation}
\end{lemma}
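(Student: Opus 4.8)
The plan is to derive both identities from one periodization computation together with the exponential-sum evaluation in Lemma \ref{NEWGeomSeriesProp}.

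For \eqref{Phi-Fourier-eqn-1}: since $I \subseteq \OK$ we have $\OK \isom \ZZ^d \subseteq I^{-1}$, so I would fix a complete set $R(I)$ of representatives of $I^{-1}/\OK$ and split $I^{-1} = \bigsqcup_{\rho \in R(I)}(\rho + \OK)$. This rewrites $\Phi_{I,\eta}(x) = \sum_{\rho \in R(I)} \sum_{n \in \ZZ^d} \eta^{-d}\phi((x-\rho-n)/\eta)$, displaying each inner sum as the $\ZZ^d$-periodization of a single compactly supported bump. Inserting this into $\widehat{\Phi_{I,\eta}}(s) = \int_{[0,1)^d} e^{-2\pi i x\cdot s}\Phi_{I,\eta}(x)\,dx$ and using that $x \mapsto e^{-2\pi i x\cdot s}$ is $\ZZ^d$-periodic (as $s \in \ZZ^d$), the unfolding identity $\int_{[0,1)^d} e^{-2\pi i x\cdot s}\sum_{n \in \ZZ^d} g(x-n)\,dx = \int_{\RR^d} e^{-2\pi i x\cdot s} g(x)\,dx$ collapses the contribution of the coset $\rho + \OK$ to $\int_{\RR^d} e^{-2\pi i x\cdot s}\,\eta^{-d}\phi((x-\rho)/\eta)\,dx$; the change of variables $y = (x-\rho)/\eta$ evaluates this as $e(\rho\cdot s)\,\widehat{\phi}(\eta s)$. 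Summing over $\rho \in R(I)$ gives $\widehat{\Phi_{I,\eta}}(s) = \widehat{\phi}(\eta s)\sum_{\rho \in R(I)} e(\rho\cdot s)$, and Lemma \ref{NEWGeomSeriesProp} identifies the last sum as $N(I)$ when $s \in T(\delta^{-1}I)$ and $0$ otherwise, i.e.\ as $N(I)\one_{\delta^{-1}I}(T^{-1}s)$. All interchanges of summation and integration are legitimate because $\phi$ is compactly supported, so only finitely many bumps are nonzero on $[0,1)^d$.

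For \eqref{Phi-Fourier-eqn-2}: here the hypothesis $\eta < C_B^{-1}N(I)^{-1/d}$ is used to kill the cross terms in $\Phi_{I,\eta}^2$. Applying Lemma \ref{separation lemma} with $I_1 = I_2 = I$ (so $N(I_1 \cap I_2) = N(I)$) shows that distinct $r, r' \in I^{-1}$ satisfy $C_B|r-r'| \geq N(I)^{-1/d}$, which for $\eta$ below the stated threshold forces the supports $r + [-\eta,\eta]^d$ of the bumps $\eta^{-d}\phi((x-r)/\eta)$ to be pairwise disjoint. Hence $\Phi_{I,\eta}(x)^2 = \sum_{r \in I^{-1}}\eta^{-2d}\phi((x-r)/\eta)^2 = \eta^{-d}\sum_{r\in I^{-1}}\eta^{-d}\psi((x-r)/\eta)$ with $\psi := \phi^2$. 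Since $\psi$ is again $C^\infty$, nonnegative, and supported in $[-1,1]^d$ --- and the normalization $\widehat{\phi}(0) = 1$ was never used in the derivation of \eqref{Phi-Fourier-eqn-1} --- the same computation applied verbatim to $\psi$ in place of $\phi$ shows that the function $\sum_{r\in I^{-1}}\eta^{-d}\psi((x-r)/\eta)$ has Fourier coefficient at $s$ equal to $\widehat{\phi^2}(\eta s)\,N(I)\,\one_{\delta^{-1}I}(T^{-1}s)$. Multiplying by the prefactor $\eta^{-d}$ yields \eqref{Phi-Fourier-eqn-2}.

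The computation is mostly bookkeeping --- the coset decomposition of $I^{-1}/\OK$, the periodization/unfolding identity, and a linear change of variables --- and the one step that genuinely needs care is the disjointness of the bump supports in the second part, which one extracts from the separation bound of Lemma \ref{separation lemma} once $\eta$ is taken below $C_B^{-1}N(I)^{-1/d}$.
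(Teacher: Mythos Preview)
Your proposal is correct and follows essentially the same route as the paper: the coset decomposition $I^{-1}=\bigsqcup_{\rho\in R(I)}(\rho+\OK)$, the unfolding identity to pass from $\int_{[0,1)^d}$ to $\int_{\RR^d}$, the change of variables yielding $e(\rho\cdot s)\widehat{\phi}(\eta s)$, and finally Lemma~\ref{NEWGeomSeriesProp}; for \eqref{Phi-Fourier-eqn-2} both you and the paper invoke Lemma~\ref{separation lemma} with $I_1=I_2=I$ to kill the cross terms and then rerun the same computation with $\phi^2$ in place of $\phi$.
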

\begin{proof}
For each prime ideal $I$, set $R(I) = I^{-1} \cap [0,1)^d$. Note that $R(I)$ is a complete set of representatives of $I^{-1} / \OK$. 
Every element $r \in I^{-1}$ can be written uniquely as $r=r'+k$, where $r' \in R(I)$ and $k \in \OK$. Moreover, $\OK$ is identified with $\ZZ^d$ via the integral basis $B$. Thus 
\begin{align*}
\widehat{\Phi_{I,\eta}}(s)
&=
\int_{[0,1)^d} \sum_{r \in I^{-1}}  \eta^{-d} \phi ((x-r)/\eta) e(s \cdot x)  dx
\\
&=
\sum_{r \in I^{-1}}  \int_{[0,1)^d}  \eta^{-d} \phi ((x-r)/\eta) e(s \cdot x)  dx
\\
&=
\sum_{r \in R(I)} \sum_{k \in \ZZ^d}  \int_{[0,1)^d}  \eta^{-d} \phi ((x - r - k)/\eta) e(s \cdot x)  dx
\\
&=
\sum_{r \in R(I)} \int_{\RR^d}  \eta^{-d} \phi ((x - r)/\eta) e(s \cdot x)  dx
\\
&=
\sum_{r \in R(I)}  e(s \cdot r) \int_{\RR^d}  \phi (u) e(\eta s \cdot u)  du
\\ 
&= 
\widehat{\phi} (\eta s) \sum_{r \in R(I)} e(s \cdot r).
\end{align*}
Now Lemma \ref{NEWGeomSeriesProp} implies \eqref{Phi-Fourier-eqn-1}. 
If $0 < \eta < C_B^{-1}N(I)^{-1/d}$, then Lemma \ref{separation lemma} implies $\phi((x-r)/\eta)$ and $\phi((x-r')/\eta)$ have disjoint support whenever $r$ and $r'$ are distinct elements of $I^{-1}$, and so 
$$
\Phi_{I,\eta}^2(x) = \sum_{r \in I^{-1}} \sum_{r' \in I^{-1}} \eta^{-2d} \phi((x-r)/\eta) \phi((x-r')/\eta) = \sum_{r \in I^{-1}} \eta^{-2d} \phi^2((x-r)/\eta). 
$$ 
Thus an argument similar to the one above gives \eqref{Phi-Fourier-eqn-2}.
\end{proof}

\subsection{The collection of ideals \texorpdfstring{$Q(M)$}{Q(M)}}

For each real number $M > 1$, 
define $Q(M)$ to be the set of prime ideals $I$ of $\OK$ such that $M^d/2 \leq |N(I)| \leq M^d$. 
By the Landau prime ideal theorem, 
\begin{equation}\label{QMsize}
|Q(M)| \sim \frac{M^d}{\log(M)}.
\end{equation}

\begin{lemma}\label{primedivisorbound}
Let $J$ be a non-zero integral ideal of $\OK$. Then 
\begin{equation}\label{primedivisorineq}
|\cbr{I \in Q(M): J \subseteq I}| \leq \frac{\log (2N(J))}{d \log M}.
\end{equation}
\end{lemma}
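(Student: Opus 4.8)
The plan is to reduce the count to the unique factorization of ideals in the Dedekind domain $\OK$. Suppose $I_1,\dots,I_k$ are the distinct prime ideals in $Q(M)$ that contain $J$, so that we must bound $k$ (the inequality is trivial when $k=0$, since $N(J)\ge 1$). In a Dedekind domain, to contain is to divide, so each $I_j$ divides $J$. Because the $I_j$ are \emph{distinct} primes, each of them occurs to exponent at least $1$ in the prime factorization of $J$, and therefore the product $I_1 I_2\cdots I_k$ divides $J$; write $J = (I_1\cdots I_k)J'$ with $J'$ an integral ideal.

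Next I would pass to ideal norms. By multiplicativity of the ideal norm, $N(J) = N(I_1\cdots I_k)\,N(J') = \left(\prod_{j=1}^k N(I_j)\right)N(J')$, and since $N(J')\ge 1$ this gives $\prod_{j=1}^k N(I_j)\le N(J)$. Each $I_j$ lies in $Q(M)$, so $N(I_j)=|N(I_j)|\ge M^d/2$ for every $j$; hence $(M^d/2)^k \le N(J)$. Taking logarithms and rearranging then yields \eqref{primedivisorineq}.

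I do not anticipate a genuine obstacle here: the whole argument rests on the ``to contain is to divide'' principle, the multiplicativity of the ideal norm, and the defining lower bound on $N(I)$ for $I\in Q(M)$. The one step that must be handled with care is the passage from ``each $I_j$ divides $J$'' to ``$I_1\cdots I_k$ divides $J$'': this uses that the $I_j$ are prime (equivalently, pairwise coprime), and it is exactly what converts $k$ separate divisibility relations into the single inequality $(M^d/2)^k \le N(J)$ from which the logarithmic bound follows.
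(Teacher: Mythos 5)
Your proof takes exactly the same route as the paper: pass from ``$J\subseteq I$'' to ``$I$ divides $J$,'' use pairwise coprimality of the distinct primes to get $I_1\cdots I_k\mid J$, invoke multiplicativity of the ideal norm to obtain $\prod_j N(I_j)\le N(J)$, and finish with the defining lower bound $N(I)\ge M^d/2$ for $I\in Q(M)$. So the approach matches.

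One thing worth flagging, though: the last sentence (``taking logarithms and rearranging then yields \eqref{primedivisorineq}'') glosses over a genuine arithmetic mismatch. From $(M^d/2)^k\le N(J)$ you get $k(d\log M-\log 2)\le\log N(J)$, i.e.\ $k\le\log N(J)/(d\log M-\log 2)$, whereas \eqref{primedivisorineq} asserts $k\le(\log 2+\log N(J))/(d\log M)$. These agree for $k\le 1$ but the first is strictly weaker than the second once $k\ge 2$, and indeed the stated inequality can actually fail: with $K=\QQ$, $M=14$, $J=(7\cdot 11\cdot 13)$ one has $k=3$ while $\log(2N(J))/(d\log M)=\log 2002/\log 14\approx 2.88$. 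The paper's own proof contains the identical gap (it uses the same $\prod N(I)\le N(J)$ plus $\log(2N(I))\ge d\log M$ and then says the result ``follows by taking logarithms''). This is harmless where the lemma is used — the application in Lemma \ref{Fk fourier lemma} only needs $k\lesssim\log N(J)/\log M$ for $M$ large, and either version of the bound gives that — but it is worth being aware that the clean inequality \eqref{primedivisorineq} as stated does not quite follow from $(M^d/2)^k\le N(J)$; what follows is the version with $2^k$ in place of $2$, equivalently $k\le\log N(J)/(d\log M-\log 2)$.
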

\begin{proof}
Since $J$ factors uniquely as a product of prime ideals $I$ that contain $J$, and since the ideal norm is multiplicative, 
\[\prod_{\substack{I \in Q(M) \\ J \subseteq I}} N(I) \leq 
N(J).\] 
The desired result follows by taking logarithms and using that each ideal $I \in Q(M)$ satisfies $d\log M \leq \log (2N(I))$. 
\end{proof}

\subsection{The Functions \texorpdfstring{$F_k$}{Fk}}

Let $\{M_k\}_{k=0}^{\infty}$ 
be an increasing sequence of positive real numbers. We assume $M_0$ is large 
and the sequence $\{M_k\}_{k=0}^{\infty}$ increases rapidly. Sufficient size and growth conditions will be noted as they arise in the argument. 

Fix $-d < \rho < d$. For each odd $k$, fix an ideal $J_k \in Q(M_k^{1+\frac{\rho}{d}})$. Thus $J_k$ is a prime ideal  
such that 
$$M_k^{d+\rho}/2 \leq |N(J_k)| \leq M_k^{d+\rho}.$$ 
Assuming $M_1$ is large enough, \eqref{QMsize} guarantees such an ideal exists. Note $J_k \in Q(M_k)$ if $\rho = 0$. 

Recall that $\rho^{+}=\max\cbr{\rho,0}$ and 
$\rho^{-}=\max\cbr{-\rho,0}$. 
Define 
$$
\P_k = 
\left\{
\begin{array}{ll}
%1+M_k^{-\rho} 
M_k^{\rho^-}
& \text{if } k \text{ is odd} \\
0 & \text{if } k \text{ is even} \\
\end{array}
\right.
$$

Recall that  $\tau > 1$. Define $\eta_k  = M_k^{-(1+\tau)}$. 
Define $c_k$ by 
$$
\dfrac{1}{c_k}  = \P_k N(J) + \sum_{I \in Q(M_k)}  N(I).
$$
For future reference, by \eqref{QMsize} and assuming $M_k$ is large enough, we have 
\begin{align}\label{ck size}
c_k \sim \log(M_k) M_k^{-2d}. 
\end{align}
Define $F_k$ by 
\begin{align}\label{Fk defn}
F_k(x) 
= c_k 
\rbr{ 
\P_k \Phi_{J_k,\eta_k}(x) + \sum_{I \in Q(M_k)} \Phi_{I,\eta_k}(x)  
}. 
\end{align}

\begin{lemma}\label{Fk fourier lemma}
Define 
$$
C_0 = 2^{-1/d}C_B^{-1} \|T^{-1}\|^{-1} N(\delta)^{1/d}. 
$$
Let $k \geq 1$ and $s \in \ZZ^d$. 
\begin{align}
\label{F-1*}
\widehat{F_k}(0) &= 1.  
\\
\label{F-2*}
|\widehat{F_k}(s)| &\leq 1, \qquad \forall s \in \ZZ^d 
\\
\label{F-3*}
|\widehat{F_k}(s)| &=0, \qquad \text{if} \  0 < |s| \leq 
C_0 M_k^{1-\rho^{-}/d}.
\\
\label{F-4*}
|\widehat{F_k}(s)|
&\lesssim M_k^{-d+\rho^+} 
\log|s| \qquad \text{if} \ |s| 
\geq 
C_0 M_k^{1-\rho^{-}/d}.
\\
\label{F-5*}
|\widehat{F_k}(s)|
&\lesssim_N  
M_k^{-d+\rho^+}
\log |s| \left(\frac{M_k^{1+\tau}}{|s|} \right)^N \qquad \text{if} \ |s| \geq M_k^{1+\tau}. 
\end{align}
\end{lemma}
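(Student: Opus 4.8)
The plan is to reduce every estimate to the single-scale Fourier formula of Lemma~\ref{Phi-Fourier-Lemma}. Applying \eqref{Phi-Fourier-eqn-1} to each summand of \eqref{Fk defn} and pulling out the common factor $\widehat{\phi}(\eta_k s)$ gives, for all $s\in\ZZ^d$,
\[
\widehat{F_k}(s)=c_k\,\widehat{\phi}(\eta_k s)\left(\P_k N(J_k)\,\one_{\delta^{-1}J_k}(T^{-1}s)+\sum_{I\in Q(M_k)}N(I)\,\one_{\delta^{-1}I}(T^{-1}s)\right).
\]
From this, \eqref{F-1*} and \eqref{F-2*} are immediate: at $s=0$ every indicator is $1$ and $\widehat{\phi}(0)=1$, so the parenthesis equals $1/c_k$ by the definition of $c_k$; and since $\phi\geq 0$ forces $|\widehat{\phi}|\leq\widehat{\phi}(0)=1$ and each indicator is $\leq 1$, the same quantity bounds $|\widehat{F_k}(s)|\leq 1$ for every $s$.

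For \eqref{F-3*} I would show the indicators above vanish when $|s|$ is small. If $\one_{\delta^{-1}I}(T^{-1}s)\neq 0$ with $s\neq 0$, then $T^{-1}s$ is a nonzero element of the fractional ideal $\delta^{-1}I$, so the separation lemma (Lemma~\ref{separation lemma}), applied with $I_1=I_2=(\delta^{-1}I)^{-1}$ and $r_1=T^{-1}s$, $r_2=0$, gives $C_B|T^{-1}s|\geq(N(I)/N(\delta))^{1/d}$. Since $\|T^{-1}\|\,|s|\geq|T^{-1}s|$, this yields $|s|\gtrsim N(I)^{1/d}$ with an implied constant depending only on $K$ and $B$. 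Now $N(I)\geq M_k^d/2$ for $I\in Q(M_k)$, while $N(J_k)\geq M_k^{d+\rho}/2\geq M_k^{d-\rho^-}/2$ because $\rho\geq-\rho^-$; hence any surviving indicator forces $|s|\gtrsim M_k^{1-\rho^-/d}$, and tracking the constant gives exactly \eqref{F-3*}.

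For \eqref{F-4*} the two ingredients are a bound on the coefficients and a count of the surviving indicators. The identity $\rho+\rho^-=\rho^+$ gives $\P_k N(J_k)\leq M_k^{\rho^-}M_k^{d+\rho}=M_k^{d+\rho^+}$, and $N(I)\leq M_k^d\leq M_k^{d+\rho^+}$ trivially. For the count, $T^{-1}s\in\delta^{-1}I$ holds iff the integral ideal $\mathcal J:=\langle T^{-1}s\rangle\delta$ (integral since $T^{-1}s\in T^{-1}(\ZZ^d)=\delta^{-1}$) satisfies $\mathcal J\subseteq I$, so Lemma~\ref{primedivisorbound} bounds the number of contributing $I\in Q(M_k)$ by $\log(2N(\mathcal J))/(d\log M_k)$; and $N(\mathcal J)=|N(T^{-1}s)|\,N(\delta)\lesssim|s|^d$ by Lemma~\ref{C1 bounds lemma}, while $\log|s|\gtrsim\log M_k$ throughout the range $|s|\geq C_0 M_k^{1-\rho^-/d}$ (as $1-\rho^-/d>0$ and $M_k$ is large). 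Hence the total number of surviving terms, including the single $J_k$ term, is $\lesssim\log|s|/\log M_k$, so $|\widehat{F_k}(s)|\lesssim c_k M_k^{d+\rho^+}\log|s|/\log M_k$, and \eqref{ck size} ($c_k\sim\log M_k\cdot M_k^{-2d}$) turns this into $\lesssim M_k^{-d+\rho^+}\log|s|$. Estimate \eqref{F-5*} is then \eqref{F-4*} with the extra input that $\widehat{\phi}$ is Schwartz (as $\phi\in C^\infty_c$): for $|s|\geq M_k^{1+\tau}$ one has $|\widehat{\phi}(\eta_k s)|\lesssim_N(\eta_k|s|)^{-N}=(M_k^{1+\tau}/|s|)^N$, and since this range lies inside $|s|\geq C_0 M_k^{1-\rho^-/d}$ the counting step still applies, so multiplying the bound above by this decay factor yields the claim.

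I expect the main obstacle to be the bookkeeping in \eqref{F-4*}: recognizing the condition ``$T^{-1}s\in\delta^{-1}I$'' as the divisibility ``$\langle T^{-1}s\rangle\delta\subseteq I$'' so that the prime divisor bound of Lemma~\ref{primedivisorbound} can be invoked, and keeping the exponent arithmetic straight ($\rho+\rho^-=\rho^+$, the threshold exponent $1-\rho^-/d$, and the powers of $M_k$ supplied by $c_k$ through \eqref{ck size}). The separation-lemma input behind \eqref{F-3*} is the conceptual heart of the argument but is short once one applies it with $r_2=0$.
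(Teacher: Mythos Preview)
Your proof is correct and follows essentially the same route as the paper's. The only cosmetic differences are: for \eqref{F-2*} you bound the explicit Fourier formula directly while the paper invokes $F_k\geq 0$; for \eqref{F-3*} you cite Lemma~\ref{separation lemma} (with $r_2=0$) whereas the paper unpacks the same norm inequality by hand via Lemmas~\ref{C1 bounds lemma} and~\ref{idealquotient}; and for \eqref{F-4*} the paper writes the bound as $M_k^{-d}(M_k^{\rho^+}\log M_k+\log|s|)$ before absorbing the first term, while you absorb it at the outset using $\log|s|\gtrsim\log M_k$.
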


\begin{proof} 
\
By Lemma \ref{Phi-Fourier-Lemma}, we obtain 
$$
\widehat{F_k}(s) 
= c_k \widehat{\phi}(\eta_k s) \left( P_k N(J_k) \mathbf{1}_{\delta^{-1}J_k} (T^{-1} s)+ \sum_{I \in Q(M_k)} N(I) \one_{\delta^{-1} I}(T^{-1}s) \right).
$$

Since $\widehat{\phi}(0) = 1$, \eqref{F-1*} follows immediately. Since $F_k \geq 0$, \eqref{F-2*} follows from \eqref{F-1*}.

We prove the contrapositive of \eqref{F-3*}. 
Assume $s \neq 0$ and $\widehat{F_k}(s) \neq 0$. 
So there is at least one $I \in \cbr{J_{k}} \cup Q(M_k)$ such that $T^{-1} s \in \delta^{-1} I$. 
Then, by Lemma \ref{C1 bounds lemma} and Lemma \ref{idealquotient},
$$
N(\delta^{-1})N(I) \leq N(\abr{T^{-1} s}) = |N(T^{-1} s)| \leq C_B^d |T^{-1} s|^d \leq C_B^d \|T^{-1}\|^d |s|^d. 
$$
If $I \in Q(M_k)$, then $N(I) \geq M_k^d/2$. If $I = J_k$, then $N(I) \geq M_k^{d+\rho}/2$. 
Thus $$|s| \geq 2^{-1/d}C_B^{-1} \|T^{-1}\|^{-1} N(\delta)^{1/d} \min\cbr{M_k,M_k^{1+\rho/d}}.$$  

Now we prove \eqref{F-4*} and \eqref{F-5*}. 
Recall $N(J_k) \leq M_k^{d+\rho}$ and $N(I) \leq M_k^d$ for all $I \in Q(M_k)$. 
By \eqref{ck size}, $c_k \lesssim \log(M_k) M_k^{-2d}$. 
We need an upper bound on the number of $I \in Q(M_k)$ such that $T^{-1}s \in \delta^{-1} I$. 
Note that $T^{-1}s \in \delta^{-1} I$ implies $ (T^{-1}s) \delta \subseteq I$. So it suffices to find an upper bound on the number of $I \in Q(M_k)$ such that $ (T^{-1}s) \delta \subseteq  I$. By applying Lemma \ref{primedivisorbound} with $J = (T^{-1}s) \delta$ and then using \eqref{C1 norm bound} and \eqref{field-norm-ideal-norm-formula}, the number in question is seen to be $\lesssim \log |s|/\log M_k$, assuming $M_k$ (and hence $|s|$) is large enough.  
Therefore,
\[
|\widehat{F_k}(s)| 
\lesssim M_k^{-d} 
( 
%(1+ M_k^\rho)
M_k^{\rho^+}
\log M_k
+
\log |s|) |\widehat{\phi} (\eta_k s)|.
\]
Since $\phi$ is Schwartz, \eqref{F-4*} and \eqref{F-5*} follow immediately. 
\end{proof}

\section{Intermediate Measures}
\label{Intermediate Measures}

In this section, we define and establish Fourier-analytic estimates on measures $\mu_k$ and $\mu_{l,k}$.
These measures are used in the construction of $\mu$ and in proving the required properties of $\mu$.

\subsection{The measures \texorpdfstring{$\mu_k$}{muk}}

Let $\phi_0:\RR^d \to \RR$ be a non-negative Schwartz function with support contained in  $([-3/8,-1/8] \cap [1/8,3/8])^d$ and $\widehat{\phi_0}(0) = \int \phi_0(x) dx = 1$. 

We define measures $\mu_k$ on $\RR^d$ by
$$
d\mu_0 = \phi_0 dx, \quad d\mu_k = \phi_0  F_{M_1} \cdots F_{M_{k}} dx \quad \text{ for all } k \in \NN.
$$
To simplify the statement and proof of the next lemma, we define $\mu_{-1}=0$.

\begin{lemma}\label{mu_k_lemma}
Let $N$ be an integer with $N \geq 100 d$. 
If $(M_{k})_{k=0}^{\infty}$  grows sufficiently rapidly, then for all integers $k \geq 0$ and all $s \in \ZZ^d$, the following statements hold: 
\begin{equation}
\label{mu-1} |\widehat{\mu}_k(s) | \lesssim 1,   
\end{equation}
\begin{equation}
\label{mu-2} |\widehat{\mu}_k(s)  -\widehat{\mu}_{k-1}(s)| \lesssim 
M_k^{-(N-2d)}\log(M_k)  \qquad {\text{if}} \ |s| \leq 
\frac{1}{2}C_0 M_k^{1-\rho^{-}/d}, 
\end{equation}
\begin{equation}
\label{mu-3'}
|\widehat{\mu}_k(s)| \lesssim 
M_k^{-d+\rho^+}
\log |s|  \qquad {\text{if}} \ |s| \geq 
\dfrac{1}{2} C_0 M_k^{1-\rho^{-}/d}, 
\end{equation}
\begin{equation}
\label{mu-4'}
|\widehat{\mu}_k(s)| \lesssim  
M_k^{-d+\rho^+}  
\log |s| \left( \frac{M_k^{1+\tau}}{|s|} \right)^{N-d} \qquad {\text{if}} \ |s| \geq 2M_k^{1+\tau}.    
\end{equation}

\end{lemma}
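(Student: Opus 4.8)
The plan is to prove the four estimates by induction on $k$, exploiting the multiplicative structure $d\mu_k = F_k\, d\mu_{k-1}$ (with $d\mu_{-1}=0$) and translating the pointwise Fourier bounds on $F_k$ from Lemma~\ref{Fk fourier lemma} into bounds on $\widehat{\mu}_k$ via the convolution identity
\[
\widehat{\mu}_k(s) = \sum_{t \in \ZZ^d} \widehat{F_k}(s-t)\,\widehat{\mu}_{k-1}(t).
\]
The base case $k=0$ is immediate: $\widehat{\mu}_0 = \widehat{\phi_0}$ is Schwartz, so \eqref{mu-1} holds, \eqref{mu-2} holds vacuously (the right-hand side is the $k=0$ version involving $M_0$, and $\widehat{\mu}_{-1}=0$, so one just needs $|\widehat{\phi_0}(s)|$ small for $|s|$ below a threshold determined by $M_0$ large — alternatively one reads \eqref{mu-2} as only claiming something for $k\ge 1$), and \eqref{mu-3'}, \eqref{mu-4'} follow from Schwartz decay once $M_0$ is large. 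The substance is the inductive step.

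\medskip

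First I would establish \eqref{mu-1} and \eqref{mu-2} together. Split the convolution sum into $t=s$ and $t\neq s$. For $t=s$ the contribution is $\widehat{F_k}(0)\widehat{\mu}_{k-1}(s) = \widehat{\mu}_{k-1}(s)$ by \eqref{F-1*}. For $t\neq s$, so $s-t\neq 0$, use \eqref{F-3*} to restrict to $|s-t| \geq C_0 M_k^{1-\rho^-/d}$, then \eqref{F-4*}/\eqref{F-5*} to bound $|\widehat{F_k}(s-t)| \lesssim M_k^{-d+\rho^+}\log|s-t| \cdot (M_k^{1+\tau}/|s-t|)^{N}\mathbf{1}_{|s-t|\ge M_k^{1+\tau}}$ (and the polynomial-tail factor is $\le 1$ below that), combined with $|\widehat{\mu}_{k-1}(t)| \lesssim 1$ from the inductive hypothesis \eqref{mu-1}. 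The key point for \eqref{mu-2}: when $|s| \leq \tfrac12 C_0 M_k^{1-\rho^-/d}$, any $t$ with $\widehat{F_k}(s-t)\neq 0$ has $|t| \geq |s-t| - |s| \geq \tfrac12 C_0 M_k^{1-\rho^-/d}$, so $t$ is in the range where \eqref{mu-3'} for $\mu_{k-1}$ gives the sharper bound $|\widehat{\mu}_{k-1}(t)| \lesssim M_{k-1}^{-d+\rho^+}\log|t|$; multiplying by $|\widehat{F_k}(s-t)|$ and summing over $t$ (the sum $\sum_{u\neq 0,\,|u|\ge C_0M_k^{1-\rho^-/d}} |\widehat{F_k}(u)|$ converges and is $\lesssim M_k^{-d+\rho^+}\cdot(\text{something})$ because of the $N$-fold decay past $M_k^{1+\tau}$) yields the claimed $M_k^{-(N-2d)}\log M_k$ — here one tracks that $\log|u|$ against $|u|^{-N}$ summed over a lattice costs a factor like $M_k^{d}\cdot (\log M_k)$ and the prefactor $M_k^{-d+\rho^+}$ together with a crude bound on $M_{k-1}^{-d+\rho^+}\log|t|\lesssim 1$ leaves room to spare once $N\ge 100d$. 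Then \eqref{mu-1} follows by telescoping: $|\widehat{\mu}_k(s)| \le |\widehat{\mu}_0(s)| + \sum_{j=1}^k |\widehat{\mu}_j(s)-\widehat{\mu}_{j-1}(s)|$, and the increments are summable (geometrically, in the rapidly-growing $M_j$) provided we handle $s$ in the "low" range at each scale; for $s$ in the "high" range at scale $k$ we instead get \eqref{mu-1} directly from \eqref{mu-3'}, which we prove next.

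\medskip

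For \eqref{mu-3'} and \eqref{mu-4'}, assume $|s| \geq \tfrac12 C_0 M_k^{1-\rho^-/d}$. Again split $\widehat{\mu}_k(s) = \sum_t \widehat{F_k}(s-t)\widehat{\mu}_{k-1}(t)$ and dyadically decompose in $|s-t|$. Bound the $t$-sum in two regimes: the terms with $|t| \le \tfrac14 C_0 M_k^{1-\rho^-/d}$ (forcing $|s-t|\gtrsim |s|$, so $\widehat{F_k}(s-t)$ enjoys strong decay — in particular past $M_k^{1+\tau}$ the $(M_k^{1+\tau}/|s-t|)^N$ factor gives \eqref{mu-4'}) and the terms with $|t| > \tfrac14 C_0 M_k^{1-\rho^-/d}$ (where \eqref{mu-3'} for $\mu_{k-1}$ applies to $\widehat{\mu}_{k-1}(t)$, and one uses $\sum_t |\widehat{F_k}(s-t)|\,|\widehat{\mu}_{k-1}(t)| \lesssim \|\widehat{\mu}_{k-1}\|_{\ell^\infty\text{-type bound}}\cdot \sum_u|\widehat{F_k}(u)|$ but more carefully keeping the $\log$ on the right $t$). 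The dominant contribution should be $t=s$: $|\widehat{F_k}(0)\widehat{\mu}_{k-1}(s)| = |\widehat{\mu}_{k-1}(s)|$, which by the inductive \eqref{mu-3'} (if $|s|$ is also large relative to $M_{k-1}$, which it is since $M_k \gg M_{k-1}$) is $\lesssim M_{k-1}^{-d+\rho^+}\log|s| \le M_k^{-d+\rho^+}\log|s|$ — wait, that inequality goes the wrong way; instead one observes $M_{k-1}^{-d+\rho^+} \le M_k^{-d+\rho^+}$ is false, so the correct bookkeeping is that $|\widehat{\mu}_{k-1}(s)|$ is controlled by \emph{either} \eqref{mu-1} (giving $\lesssim 1$, too weak) \emph{or}, when $|s|$ lands in $\mu_{k-1}$'s own high range, by \eqref{mu-3'} at level $k-1$ — and then one must also multiply by the total mass contributed by the $F_k$-convolution, namely note $F_k\,d\mu_{k-1}$ has $\widehat{F_k}(0)=1$ so no mass is gained; the cleanest route is: for the main term use that $\widehat{\mu}_k(s)$ equals $\widehat{\mu}_{k-1}(s)$ plus an error controlled exactly as in \eqref{mu-2}'s proof but now without the smallness of $s$, giving an error $\lesssim M_k^{-d+\rho^+}\log|s|$, and separately $|\widehat{\mu}_{k-1}(s)| \lesssim M_{k-1}^{-d+\rho^+}\log|s|$ which is $\le M_k^{-d+\rho^+}\log|s|$ when $\rho^+ < d$ fails to help — so in fact one needs $M_{k-1}^{-d+\rho^+} \lesssim M_k^{-d+\rho^+}$, true since $M_{k-1}<M_k$ and the exponent $-d+\rho^+<0$: indeed $M_{k-1}^{-d+\rho^+} \ge M_k^{-d+\rho^+}$, still wrong direction. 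The resolution, and the genuinely delicate point, is that the induction must be set up so that \eqref{mu-3'} is propagated \emph{with the current} $M_k$, which works because the error term from convolving with $F_k$ already supplies a factor $M_k^{-d+\rho^+}\log|s|$ and the "old" term $\widehat{\mu}_{k-1}(s)$ is, for $|s|$ this large, actually tiny (of size $\lesssim M_{k-1}^{-d+\rho^+}\log|s| \cdot (M_{k-1}^{1+\tau}/|s|)^{N-d}$ by \eqref{mu-4'} at level $k-1$, since $|s| \gtrsim M_k^{1-\rho^-/d} \gg M_{k-1}^{1+\tau}$ by rapid growth) — so it is dominated by the new error.

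\medskip

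The main obstacle, then, is precisely this bookkeeping in \eqref{mu-3'}: arranging the growth hypothesis on $(M_k)$ (something like $M_k \ge M_{k-1}^{1+\tau+1}$, or whatever makes $M_k^{1-\rho^-/d} \gg M_{k-1}^{1+\tau}$) so that when $|s|$ exceeds scale-$k$'s threshold it automatically exceeds scale-$(k-1)$'s \emph{polynomial-decay} threshold $M_{k-1}^{1+\tau}$, which turns $\widehat{\mu}_{k-1}(s)$ from "the same size as what we want to prove" into "negligible", letting the freshly-generated error from $F_k$ dominate. The remaining work — the dyadic sums $\sum_{|u|\sim 2^j} |\widehat{F_k}(u)|$, converting $\log$-losses and lattice-point counts $\sim 2^{jd}$ into harmless powers of $M_k$ absorbed by the $(M_k^{1+\tau}/2^j)^N$ tail, and the telescoping for \eqref{mu-1} — is routine once this threshold comparison is fixed, and the condition "$N \ge 100d$" is exactly the slack needed to absorb all such $d$-dependent lattice and logarithmic losses.
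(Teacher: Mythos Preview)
Your overall approach matches the paper's: induction on $k$, with the inductive step handled by analyzing the convolution $\widehat{\mu}_k = \widehat{F_k} \ast \widehat{\mu}_{k-1}$ using the bounds from Lemma~\ref{Fk fourier lemma} and the inductive hypotheses. The paper packages this convolution analysis into a separate abstract result (the Convolution Stability Lemma, Lemma~\ref{convstab lemma}), applied with $G = \widehat{F_{k}}$, $H = \widehat{\mu_{k-1}}$; you propose to carry out the same estimates inline. You also correctly isolate the delicate point for \eqref{mu-3'}: once $|s| \gtrsim M_k^{1-\rho^-/d}$, rapid growth forces $|s| \gg M_{k-1}^{1+\tau}$, so $\widehat{\mu}_{k-1}(s)$ is negligible by \eqref{mu-4'} at level $k-1$, and the new error generated by convolution with $\widehat{F_k}$ dominates.

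There is, however, a genuine gap in your argument for \eqref{mu-2}. You propose to control $\sum_{t\neq s}\widehat{F_k}(s-t)\widehat{\mu}_{k-1}(t)$ by bounding $\sum_{|u|\ge A}|\widehat{F_k}(u)|$ and using only $|\widehat{\mu}_{k-1}(t)| \lesssim M_{k-1}^{-d+\rho^+}\log|t|$ from \eqref{mu-3'} at level $k-1$. But $\sum_{|u|\ge A}|\widehat{F_k}(u)|$ is not small: \eqref{F-4*} gives $|\widehat{F_k}(u)| \lesssim M_k^{-d+\rho^+}\log|u|$ with \emph{no decay} in $u$ until $|u|$ exceeds $M_k^{1+\tau}$, so the range $A \le |u| \le M_k^{1+\tau}$ alone contributes $\gtrsim M_k^{\tau d + \rho^+}$, which swamps the claimed bound. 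The correct move is the one you already found for \eqref{mu-3'}: invoke \eqref{mu-4'} at level $k-1$ (the fast decay $|\widehat{\mu}_{k-1}(t)| \lesssim (M_{k-1}^{1+\tau}/|t|)^{N-d}\log|t|$, applicable since $|t| \ge A/2 \gg M_{k-1}^{1+\tau}$) together with the trivial bound $|\widehat{F_k}| \le 1$, so that $\sum_{|t|\ge A/2}|\widehat{\mu}_{k-1}(t)|$ itself furnishes the factor $A^{-(N-2d)}$. This is precisely how the paper's Lemma~\ref{convstab lemma} establishes its first conclusion \eqref{convlemmares1}.
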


\begin{proof}
We prove \eqref{mu-1}, \eqref{mu-2}, \eqref{mu-3'}, and \eqref{mu-4'} by induction. 

\textit{Base Step.}  
When $k=0$, \eqref{mu-1}, \eqref{mu-2}, \eqref{mu-3'}, \eqref{mu-4'} follow from 
our choice that $\phi_0$ is Schwartz and $\widehat{\phi_0}(0)=1$, by taking $M_0$ and the implied constants large enough.

\textit{Inductive Step.} 
Assume \eqref{mu-1}, \eqref{mu-2}, \eqref{mu-3'}, \eqref{mu-4'} hold with $k=0,\ldots,n$.
We shall prove \eqref{mu-1}, \eqref{mu-2}, \eqref{mu-3'}, \eqref{mu-4'} when $k=n+1$ 
using Lemma \ref{convstab lemma}.
Set $G = \widehat{F_{n+1}}$, 
$H = \widehat{\mu_n}$, 
$L(x) = \log(x)$, 
$D = 
M_{n+1}^{-d+\rho^+}
$, 
$A = C_0 M_{n+1}^{1-\rho^{-}/d} $, 
$B = M_{n+1}^{1+\tau}$. 
Notice that the choice $N \geq 100d$ implies that \eqref{Dcond} holds. 
It also implies that $N>2d+2a$ holds for $a = 1$. 
The inequalities \eqref{F-3*}, \eqref{F-4*}, \eqref{F-5*}  in Lemma \ref{Fk fourier lemma} when $k=n+1$ imply (respectively) that the assumptions \eqref{convcond1}, \eqref{convcond2}, \eqref{convcond3} in Lemma \ref{convstab lemma} hold. 
The inequalities \eqref{F-1*}, \eqref{F-2*} in Lemma \ref{Fk fourier lemma} when $k=n+1$ imply $|G| \leq G(0)=1$. 
Our assumption of \eqref{mu-1} when $k=n$ implies $H$ is bounded. 
Also, our assumption of \eqref{mu-4'} when $k=n$ implies that \eqref{convcond4} in Lemma \ref{convstab lemma} holds with an appropriately large implicit constant. 
Because of our choice of $A$ and $B$ above, 
if $M_{n+1}$ is large enough, then the conclusions \eqref{convlemmares1}, \eqref{convlemmares2}, \eqref{convlemmares3} in Lemma \ref{convstab lemma} hold. 
But \eqref{convlemmares1}, \eqref{convlemmares2}, \eqref{convlemmares3} immediately give 
\eqref{mu-2}, \eqref{mu-3'}, \eqref{mu-4'} (respectively) when $k=n+1$. 
It remains to prove \eqref{mu-1} when $k=n+1$. 
Recall we have assumed 
\eqref{mu-2} holds with $k=0,\ldots,n$, and we have proved \eqref{mu-2} when $k=n+1$. 
Therefore 
\begin{align*}
|\widehat{\mu_{n+1}}(0) - \widehat{\mu_0}(0)|
= 
\sum_{k=1}^{n+1} (\widehat{\mu_{k}}(0) - \widehat{\mu_{k-1}}(0))
\lesssim  \sum_{k=1}^{n+1} 
%%%M_{k}^{-(N-2d-1)}\log(M_{k}).
M_{k}^{-(N-2d)}\log(M_{k}).
\end{align*}
However,  
$\widehat{\mu_0}(0) = \widehat{\phi_0}(0) = 1$.
Furthermore, 
since $\mu_{n+1}$ has non-negative density, 
$|\widehat{\mu_{n+1}}(s)| \leq \widehat{\mu_{n+1}}(0)$. 
Thus, 
if $M_{1},\ldots,M_{n+1}$ are large enough, we get \eqref{mu-1} when $k=n+1$. 
\end{proof}

\subsection{The measures \texorpdfstring{$\mu_{l,k}$}{mul,k}}

The following measures will be used to prove regularity and the failure of the restriction estimate for $\mu$.   

For $ l \geq k \geq 1$, define $$\mu_{l,k} 
:=  
F_{M_l} \cdots F_{M_{k+1}} F_{M_{k-1}} \cdots F_{M_1} \phi_0
= F_{M_l} \cdots F_{M_{k+1}} \mu_{k-1}. 
$$
Note $\mu_{k,k} := \mu_{k-1}$. 

We can prove an analog of Lemma \ref{mu_k_lemma} for $\mu_{l,k}$ using a virtually identical argument. However, we only need the following weaker version. We omit the proof for brevity.

\begin{lemma}\label{mu_k_ell_lemma}
For all $l > k \geq 1$ and $s \in \ZZ^d$, 
\begin{equation}\label{mukleq}
\left| \widehat{\mu_{l,k}} (s)  \right| \lesssim 1,  
\end{equation}
\begin{equation}\label{mukleq2}
\left| \widehat{\mu_{l,k}} (s) - \widehat{\mu_{l-1,k}}(s) \right| \leq \log(M_l)M_l^{-100} \qquad \text{if}\  |s| \leq  
\frac{1}{2}C_0 M_{l}^{1-\rho^{-}/d}.
\end{equation}
\end{lemma}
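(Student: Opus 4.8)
The plan is to imitate the inductive proof of Lemma \ref{mu_k_lemma} but keep only the two weaker conclusions we actually need, so that most of the bookkeeping drops away. Fix $k \geq 1$; this index is inert throughout, since $\mu_{l,k}$ is built from $\mu_{k-1}$ (a fixed function, bounded with $\widehat{\mu_{k-1}}(0)$ finite by Lemma \ref{mu_k_lemma}) by successively multiplying by $F_{M_{k+1}}, \ldots, F_{M_l}$. We induct on $l$, starting at $l = k+1$. The base case $\mu_{k+1,k} = F_{M_{k+1}}\mu_{k-1}$ follows by writing $\widehat{\mu_{k+1,k}} = \widehat{F_{M_{k+1}}} * \widehat{\mu_{k-1}}$ and applying the Convolution Stability Lemma (Lemma \ref{convstab lemma}) exactly as in the inductive step of Lemma \ref{mu_k_lemma}, with $G = \widehat{F_{M_{k+1}}}$, $H = \widehat{\mu_{k-1}}$, and the same choices of $D, A, B, L$; the hypotheses \eqref{convcond1}--\eqref{convcond4} come from \eqref{F-3*}--\eqref{F-5*} in Lemma \ref{Fk fourier lemma} together with boundedness of $\widehat{\mu_{k-1}}$. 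For the decay exponent in \eqref{mukleq2} I only need something of size $\log(M_l) M_l^{-100}$, which is weaker than what the lemma delivers as long as $N$ (hence $N - 2d$) is taken $\geq 100 + 2d$, say; this is why we can afford the crude statement.

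For the inductive step, assume \eqref{mukleq} holds for $\mu_{l-1,k}$. Then $\mu_{l,k} = F_{M_l}\mu_{l-1,k}$, so $\widehat{\mu_{l,k}} = \widehat{F_{M_l}} * \widehat{\mu_{l-1,k}}$, and I again invoke Lemma \ref{convstab lemma} with $G = \widehat{F_{M_l}}$, $H = \widehat{\mu_{l-1,k}}$, $D = M_l^{-d+\rho^+}$, $A = C_0 M_l^{1-\rho^-/d}$, $B = M_l^{1+\tau}$, $L(x) = \log x$. The point estimate $|G| \leq G(0) = 1$ is \eqref{F-1*}--\eqref{F-2*}; boundedness of $H$ is the inductive hypothesis \eqref{mukleq}; the decay hypotheses on $G$ away from the origin are \eqref{F-3*}--\eqref{F-5*}. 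The conclusion \eqref{convlemmares1} of Lemma \ref{convstab lemma} gives $|\widehat{\mu_{l,k}}(s) - \widehat{\mu_{l-1,k}}(s)| \lesssim D \log B \cdot (\text{something small})$ for $|s| \leq \frac12 A$, which after absorbing constants and using $M_l$ large is bounded by $\log(M_l) M_l^{-100}$ — this is \eqref{mukleq2}. To close the induction on \eqref{mukleq}, I argue as in Lemma \ref{mu_k_lemma}: since $\mu_{l,k}$ has non-negative density, $|\widehat{\mu_{l,k}}(s)| \leq \widehat{\mu_{l,k}}(0)$, and telescoping \eqref{mukleq2} at $s = 0$ from $l$ down to $k+1$ gives $\widehat{\mu_{l,k}}(0) \leq \widehat{\mu_{k-1}}(0) + \sum_{j=k+1}^{l} \log(M_j) M_j^{-100}$, which is $\lesssim 1$ provided the $M_j$ grow rapidly enough (so the tail sum is, say, $\leq 1$). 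That gives \eqref{mukleq} for $\mu_{l,k}$ and completes the induction.

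The one genuine point to get right is that we must not need any inductive hypothesis beyond boundedness \eqref{mukleq} in order to apply Lemma \ref{convstab lemma} — in particular we must not need an analog of \eqref{mu-4'} for $\mu_{l-1,k}$ as hypothesis \eqref{convcond4}. Looking at how Lemma \ref{convstab lemma} is used in Lemma \ref{mu_k_lemma}, hypothesis \eqref{convcond4} is about the tail decay of $H = \widehat{\mu_{l-1,k}}$ at frequencies $|s| \gtrsim M_{l-1}^{1+\tau}$, and is used only to control the high-frequency regime of the convolution. Since $\mu_{l-1,k}$ has density $\phi_0 F_{M_1}\cdots$ with the same Schwartz factor $\phi_0$, one can either (a) carry a third trivial clause through the induction recording the high-frequency Schwartz decay of $\widehat{\mu_{l-1,k}}$ — which also follows by convolution from the Schwartz decay of $\widehat{\phi_0}$ and \eqref{F-5*} — or (b) note, as the paper does by saying ``a virtually identical argument,'' that the full analog of Lemma \ref{mu_k_lemma} for $\mu_{l,k}$ is available and we are simply stating the weaker corollary. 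I would take route (b) for brevity: state that the proof of Lemma \ref{mu_k_lemma} goes through verbatim with $\mu_k$ replaced by $\mu_{l,k}$ (the fixed starting point being $\mu_{k-1}$ rather than $\mu_0$, and $M_{k+1}, M_{k+2}, \ldots$ playing the role of $M_1, M_2, \ldots$), yielding in particular \eqref{mukleq} and \eqref{mukleq2}, the latter with room to spare since $N - 2d \geq 100$. The main obstacle is thus not mathematical but expository: making sure the reader sees that reindexing is harmless and that no circularity is introduced, which I would handle with one or two sentences rather than a full rerun of the argument.
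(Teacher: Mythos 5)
Your proposal is correct and matches the paper's approach: the paper itself omits the proof, saying only that "an analog of Lemma \ref{mu_k_lemma} for $\mu_{l,k}$" follows from "a virtually identical argument," which is precisely your route (b) — rerun the induction with $\mu_{k-1}$ as the starting measure and $M_{k+1}, M_{k+2}, \ldots$ replacing $M_1, M_2, \ldots$, carrying the high-frequency tail estimate (the analog of \eqref{mu-4'}) through the induction so that hypothesis \eqref{convcond4} of Lemma \ref{convstab lemma} is available at each step. You correctly identified that boundedness alone is not enough to feed the Convolution Stability Lemma and that the tail clause must ride along, which is the one non-obvious point; the rest (telescoping at $s=0$ to recover boundedness, absorbing the implied constant into $M_l^{-100}$ by choosing $N > 100 + 2d$) is handled as in Lemma \ref{mu_k_lemma}.
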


The following estimate will be used several times. 

\begin{lemma}\label{H mu convolution lemma}
Let $l > k \geq 1$.  
Let $C_k > 0$ and $H_k:\ZZ^d \to \RR$ such that, for all $u \in \ZZ^d$, 
\begin{align}\label{H mu convolution 1}
|H_k(u)| \leq C_k (1+\eta_k|u|)^{-10d}. 
\end{align}
Then, for all $s \in \ZZ^d$ such that $|s| \leq M_k^{1+\tau}$, 
\begin{align*}
\sum_{t \in \ZZ^d} |H_k(s-t)| 
| \widehat{\mu_{l,k}} (t) - \widehat{\mu_{k-1}}(t) |
\lesssim C_k M_{k}^{-80(1+\tau)d}. 
\end{align*}
\end{lemma}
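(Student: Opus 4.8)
The plan is to split the sum over $t \in \ZZ^d$ according to the size of $|t|$, using the rapid decay of $H_k$ from hypothesis \eqref{H mu convolution 1} for large $t$ and the near-constancy of $\widehat{\mu_{l,k}}$ on low frequencies (via Lemma \ref{mu_k_ell_lemma}) for small $t$. Set $A_l = \tfrac{1}{2} C_0 M_l^{1-\rho^-/d}$. The key observation is that $\mu_{l,k}$ and $\mu_{k-1}$ differ only in the factors $F_{M_{k+1}}, \ldots, F_{M_l}$, so $\widehat{\mu_{l,k}} - \widehat{\mu_{k-1}}$ is a sum of telescoping differences $\widehat{\mu_{j,k}} - \widehat{\mu_{j-1,k}}$ for $k < j \le l$, each of which is $\lesssim \log(M_j) M_j^{-100}$ on the ball $|t| \le A_j$ by \eqref{mukleq2}, and $O(1)$ everywhere by \eqref{mukleq}. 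Since the $M_j$ grow rapidly, the smallest relevant threshold is $A_{k+1}$, and one should expect $|\widehat{\mu_{l,k}}(t) - \widehat{\mu_{k-1}}(t)| \lesssim \log(M_{k+1}) M_{k+1}^{-100}$ for $|t| \le A_{k+1}$ and $\lesssim 1$ for all $t$.

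First I would split $\sum_t = \sum_{|t| \le A_{k+1}} + \sum_{|t| > A_{k+1}}$. For the \emph{near region} $|t| \le A_{k+1}$, I bound $|\widehat{\mu_{l,k}}(t) - \widehat{\mu_{k-1}}(t)|$ by the telescoping estimate just described, getting a factor $\lesssim \log(M_{k+1}) M_{k+1}^{-100}$, and I bound $\sum_{t \in \ZZ^d} |H_k(s-t)| \lesssim C_k \eta_k^{-d} = C_k M_k^{(1+\tau)d}$ using that $(1+\eta_k|u|)^{-10d}$ is summable against the lattice with the stated normalization (a shift by $s$ does not change the sum's magnitude up to constants). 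Combining, this piece is $\lesssim C_k M_k^{(1+\tau)d} \log(M_{k+1}) M_{k+1}^{-100}$, which is far smaller than the claimed bound once $M_{k+1}$ is large relative to $M_k$. For the \emph{far region} $|t| > A_{k+1}$, I use $|\widehat{\mu_{l,k}}(t) - \widehat{\mu_{k-1}}(t)| \lesssim 1$ and exploit the decay of $H_k(s-t)$: since $|s| \le M_k^{1+\tau} = \eta_k^{-1}$ and $|t| > A_{k+1} \gg M_k^{1+\tau}$ (again by rapid growth of $M_{k+1}$), we have $|s - t| \gtrsim |t|$, so $|H_k(s-t)| \lesssim C_k (\eta_k |t|)^{-10d}$, and $\sum_{|t| > A_{k+1}} (\eta_k|t|)^{-10d} \lesssim (\eta_k A_{k+1})^{-10d + d} \cdot \eta_k^{-d}$ or more crudely $\lesssim C_k \eta_k^{-d}(\eta_k A_{k+1})^{-9d}$, which is again dominated by a large negative power of $M_{k+1}$ times $C_k M_k^{(1+\tau)d}$.

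In both regions the resulting bound is of the form $C_k M_k^{(1+\tau)d} M_{k+1}^{-\gamma}$ for some $\gamma$ bounded below (e.g. $\gamma \ge 90 d$ after accounting for the $\log$), and since the sequence $(M_k)$ is assumed to grow as rapidly as we like, we may arrange $M_{k+1}^{-\gamma} \le M_k^{-(80(1+\tau)d + (1+\tau)d)}$, so that the total is $\lesssim C_k M_k^{-80(1+\tau)d}$ as desired. The main obstacle — really the only subtle point — is making the far-region estimate clean: one must verify that the threshold $A_{k+1}$ genuinely exceeds $|s|$ (so that $|s-t| \asymp |t|$ there) and that the tail sum $\sum_{|t| > A_{k+1}} |H_k(s-t)|$ is controlled with enough room to absorb the $O(1)$ factor from $\widehat{\mu_{l,k}} - \widehat{\mu_{k-1}}$; both follow from the growth hypothesis on $(M_k)$, which we are free to strengthen as needed, exactly as in the proof of Lemma \ref{mu_k_lemma}.
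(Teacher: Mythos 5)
Your proposal is correct and follows essentially the same two-region decomposition as the paper: split the sum at a threshold far exceeding $M_k^{1+\tau}$, use $|\widehat{\mu_{l,k}} - \widehat{\mu_{k-1}}| \lesssim 1$ together with the decay of $H_k$ (via $|s-t| \gtrsim |t|$) in the far region, and use the telescoping of Lemma \ref{mu_k_ell_lemma} in the near region. The only cosmetic differences are that the paper chooses the split point $M_k^{10(1+\tau)}$ (which makes the far-region tail depend only on $M_k$), and in the near region simply counts lattice points against $|H_k| \lesssim C_k$ rather than using $\sum|H_k| \lesssim C_k\eta_k^{-d}$; both variants close the estimate once $(M_k)$ grows rapidly enough.
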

\begin{proof}
Since $|s| \leq M_{k}^{1+\tau}$, 
if $|t| \geq M_k^{10(1+\tau)}$, then 
$|s-t| \geq |t|/2$, and so \eqref{H mu convolution 1} implies 
$$
|H_k(s-t)| \lesssim C_k (1+\eta_k |t|)^{-10d}. 
$$
We combine this with \eqref{mu-1} and \eqref{mukleq} to obtain 
\begin{align*}
&\sum_{|t| \geq M_k^{10(1+\tau)}} 
|H_k(s-t)| |\widehat{\mu_{l,k}} (t) - \widehat{\mu}_{k-1}(t)| 
\\
&
\lesssim \sum_{|t| \geq M_k^{10(1+\tau)}} C_k  \left( \frac{M_k^{1+\tau}}{|t|} \right)^{10d} 
\lesssim C_k M_k^{-80(1+\tau)d}.
\end{align*}
If $|t| \leq M_k^{10(1+\tau)}$,
since $M_{k}^{10(1+\tau)} \leq %M_{k+1}(\rho)/2$,
\frac{1}{2}C_0 M_{k+1}^{1-\rho^{-}/d}$, 
\eqref{mukleq2} implies that
$$
|\widehat{\mu_{l,k}} (t) - \widehat{\mu}_{k-1}(t)| \leq \sum_{j=k+1}^l \log(M_j) M_j^{-100} \lesssim M_{k+1}^{-99}.
$$
Then, since \eqref{H mu convolution 1} implies $|H_k| \lesssim C_k$, 
we obtain
\begin{align*}
&\sum_{|t| \leq M_k^{10(1+\tau)}}  |H_k (s-t)| |\widehat{\mu_{l,k}} (t) - \widehat{\mu}_{k-1}(t)| \\
&\lesssim C_k M_k^{10(1+\tau) d} M_{k+1}^{-99} \lesssim C_k M_k^{-80(1+\tau)d}.
\end{align*}
\end{proof}

\section{Construction and Fourier Analysis of  \texorpdfstring{$\mu$}{mu}}\label{sec_measure}

In this section we construct the measure $\mu$ and prove it satisfies 
the desired support and Fourier decay conditions (namely, conditions (i) and (iii) of Theorem \ref{mainthm3}). 

\subsection{Construction of  \texorpdfstring{$\mu$}{mu}}

\begin{lemma}\label{weakconv}
The sequence $(\mu_k)$ converges weakly to a non-zero finite Borel measure $\mu$. 
\end{lemma}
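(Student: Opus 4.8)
The plan is to exploit that every $\mu_k$ is a \emph{non-negative} measure supported in the fixed compact set $X = \supp(\phi_0) \subseteq [-3/8,3/8]^d$ --- its density $\phi_0 F_{M_1}\cdots F_{M_k}$ is non-negative since $\phi_0 \geq 0$ and each $F_{M_j}\geq 0$ --- so that the whole question reduces to controlling the Fourier coefficients $\widehat{\mu_k}(s)$, $s\in\ZZ^d$. First I would extract two consequences of Lemma \ref{mu_k_lemma}. From \eqref{mu-1}, the total masses $\mu_k(\RR^d)=\widehat{\mu_k}(0)$ are uniformly bounded. From \eqref{mu-2}, for each fixed $s\in\ZZ^d$ one has $|s|\leq \tfrac12 C_0 M_k^{1-\rho^-/d}$ for all $k$ past some $k_0(s)$ (here $\rho^-<d$ is used), and then $\sum_{k>k_0(s)}|\widehat{\mu_k}(s)-\widehat{\mu_{k-1}}(s)| \lesssim \sum_k M_k^{-(N-2d)}\log(M_k) <\infty$ by the rapid growth of $(M_k)$ and $N\geq 100d>2d$; the finitely many initial increments are harmless, so $(\widehat{\mu_k}(s))_k$ is Cauchy and converges, say to $c(s)$. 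In particular, the telescoping bound for $s=0$ already established inside the proof of \eqref{mu-1} gives $|c(0)-1| = \lim_k|\widehat{\mu_k}(0)-\widehat{\mu_0}(0)| \lesssim \sum_k M_k^{-(N-2d)}\log(M_k) < 1$ once $M_1$ is large enough, so $c(0)>0$.

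Next I would upgrade coefficientwise convergence to weak convergence by density of trigonometric polynomials. Given $g\in C_c(\RR^d)$, the integral $\int g\,d\mu_k$ depends only on $g|_X$; extending $g|_X$ to a $\ZZ^d$-periodic continuous function (possible since $X\subseteq(-1/2,1/2)^d$) and applying the Stone--Weierstrass theorem on the torus $\RR^d/\ZZ^d$, for each $\varepsilon>0$ there is a trigonometric polynomial $P=\sum_{|s|\leq R}a_s e(s\cdot x)$ with $\|g-P\|_{L^\infty(X)}<\varepsilon$. Then for $k,l$ large,
\[
\Big|\int g\,d\mu_k-\int g\,d\mu_l\Big| \leq \varepsilon\big(\mu_k(X)+\mu_l(X)\big) + \sum_{|s|\leq R}|a_s|\,\big|\widehat{\mu_k}(s)-\widehat{\mu_l}(s)\big|,
\]
where the first term is $\lesssim\varepsilon$ and the second tends to $0$ as $k,l\to\infty$. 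Hence $(\int g\,d\mu_k)_k$ is Cauchy for every $g\in C_c(\RR^d)$; call its limit $\Lambda(g)$. The functional $\Lambda$ is linear, positive, bounded ($|\Lambda(g)|\leq\|g\|_\infty\sup_k\mu_k(X)\lesssim\|g\|_\infty$), and vanishes on functions supported off $X$, so by the Riesz representation theorem there is a finite non-negative Borel measure $\mu$ supported in $X$ with $\Lambda(g)=\int g\,d\mu$. By construction $\mu_k\to\mu$ weakly; testing against any $g\in C_c(\RR^d)$ with $g\equiv 1$ on $X$ gives $\mu(\RR^d)=\mu(X)=\lim_k\widehat{\mu_k}(0)=c(0)>0$, so $\mu$ is non-zero and finite.

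I do not expect a serious obstacle: the substantive work is already packaged in Lemma \ref{mu_k_lemma}, and what remains is the standard reduction of weak convergence of uniformly bounded, compactly supported, non-negative measures to convergence of Fourier coefficients. The only points needing care are (a) arranging that all $\mu_k$ lie inside one fundamental domain of $\ZZ^d$, so that "Fourier coefficients" and weak limits are compatible --- which holds since $\supp(\phi_0)\subseteq(-1/2,1/2)^d$ --- and (b) the summability of the increments in \eqref{mu-2}, which is precisely where the rapid-growth hypothesis on $(M_k)$ is invoked.
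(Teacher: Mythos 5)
Your proof is correct, and it takes a genuinely different final step from the paper's. Both proofs begin identically: use \eqref{mu-2} to show that $(\widehat{\mu_k}(s))_k$ is Cauchy for each $s\in\ZZ^d$ (exploiting $1-\rho^-/d>0$ so that every fixed $s$ eventually enters the regime where \eqref{mu-2} applies, and rapid growth of $M_k$ with $N\geq 100d$ for summability), and use the $s=0$ telescoping estimate to bound the total masses above and away from zero. The difference is how weak convergence is then concluded. The paper upgrades coefficientwise convergence on $\ZZ^d$ to pointwise convergence of $\widehat{\mu_k}(\xi)$ on all of $\RR^d$ via a Poisson-summation argument (Lemma \ref{convseries}), checks tightness, and invokes the L\'evy continuity theorem. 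You instead stay on the lattice: since all $\mu_k$ are non-negative measures living in a fixed compact subset of $(-1/2,1/2)^d$, you extend any test function periodically, approximate by trigonometric polynomials via Stone--Weierstrass on the torus, deduce that $(\int g\,d\mu_k)_k$ is Cauchy for every $g\in C_c(\RR^d)$, and apply the Riesz representation theorem. Your route avoids both L\'evy continuity and the auxiliary Lemma \ref{convseries}, and is arguably more elementary and more self-contained for this specific situation where all measures are supported in a common fundamental domain; the paper's route is more modular (Lemma \ref{convseries} and L\'evy's theorem are quoted as black boxes and the former is reusable). The one point where your write-up is slightly terse is the periodic extension of $g|_X$: since $X=\supp(\phi_0)$ is a compact subset of the open cube $(-1/2,1/2)^d$, one should multiply by a continuous cutoff equal to $1$ on $X$ and $0$ near $\partial[-1/2,1/2]^d$ before periodizing, but that is standard and you correctly flagged the needed containment.
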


The proof of Lemma \ref{weakconv} requires the Lévy continuity theorem. \begin{thm}[Lévy Continuity Theorem]\label{Levy}
Let $(\nu_k)$ be a sequence of Borel measures on $\RR^d$ such that $0 < c \leq \nu_k(\RR^d) \leq C < \infty$ for all $k$. 
Suppose $(\widehat{\nu_k}(\xi))$ converges for each $\xi \in \RR^d$ 
and 
suppose $(\nu_k)$ is tight (i.e., for every $\epsilon>0$ there is a compact set $K_{\epsilon} \subseteq \RR^d$ such that $\nu_k(\RR^d \setminus K_{\epsilon}) < \epsilon$ for all $k$). Then there exists a finite non-zero Borel measure $\nu$ on $\RR^d$ such that $\nu_k$ converges to $\nu$ weakly (i.e., in distribution). 
\end{thm}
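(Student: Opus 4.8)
The plan is to verify the three hypotheses of the Lévy Continuity Theorem (Theorem~\ref{Levy}) for the sequence $(\mu_k)$: a two-sided bound $0 < c \le \mu_k(\RR^d) \le C < \infty$ uniform in $k$, tightness, and convergence of $\widehat{\mu_k}(\xi)$ for every $\xi \in \RR^d$. Tightness is immediate, since each $\mu_k$ has density $\phi_0 F_{M_1}\cdots F_{M_k}$ and hence $\supp(\mu_k) \subseteq \supp(\phi_0)$, a fixed compact set, so one may take $K_\epsilon = \supp(\phi_0)$ for every $\epsilon > 0$. For the masses, $\mu_k(\RR^d) = \widehat{\mu_k}(0)$; since $\widehat{\mu_0}(0) = \widehat{\phi_0}(0) = 1$, telescoping \eqref{mu-2} of Lemma~\ref{mu_k_lemma} at $s=0$ gives $|\widehat{\mu_k}(0)-1| \lesssim \sum_{j\ge 1} M_j^{-(N-2d)}\log(M_j) < 1/2$ provided $M_1$ is large and $(M_k)$ grows rapidly, so $\mu_k(\RR^d) \in (1/2,3/2)$ for all $k$.

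The crux is convergence of $\widehat{\mu_k}(\xi)$ at an arbitrary real $\xi$; I would show that the series $\sum_{k\ge 1}\big(\widehat{\mu_k}(\xi)-\widehat{\mu_{k-1}}(\xi)\big)$ converges. Set $G_{k-1} = F_{M_1}\cdots F_{M_{k-1}}$, so that $\mu_{k-1}$ has density $\phi_0 G_{k-1}$ and $\mu_k$ has density $\phi_0 G_{k-1} F_{M_k}$. Since $F_{M_k}$ is $\ZZ^d$-periodic and $C^\infty$, it has an absolutely convergent Fourier series; by \eqref{F-1*} its mean is $1$, and by \eqref{F-3*} its nonzero frequencies all have $|s| > C_0 M_k^{1-\rho^-/d}$. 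Expanding $F_{M_k}-1$ in this series and interchanging sum and integral (justified by $\sum_s |\widehat{F_k}(s)| < \infty$ and $\phi_0 G_{k-1} \in L^1$) yields
\[
\widehat{\mu_k}(\xi) - \widehat{\mu_{k-1}}(\xi)
= \int_{\RR^d} e^{-2\pi i x\cdot\xi}\,\phi_0(x) G_{k-1}(x)\big(F_{M_k}(x)-1\big)\,dx
= \sum_{|s| > C_0 M_k^{1-\rho^-/d}} \widehat{F_k}(s)\,\widehat{\mu_{k-1}}(\xi - s).
\]
Here \eqref{F-2*}, \eqref{F-4*} and \eqref{F-5*} give $S_k := \sum_{s\in\ZZ^d} |\widehat{F_k}(s)| \lesssim M_k^{\tau d + \rho^+}\log(M_k)$, while the density $\phi_0 G_{k-1}$ is $C^\infty$ with compact support, so $\widehat{\mu_{k-1}}$ is a Schwartz function and, for each $N$, $|\widehat{\mu_{k-1}}(\eta)| \le C_{k-1,N}(1+|\eta|)^{-N}$ with $C_{k-1,N}$ depending only on $N$ and $M_1,\dots,M_{k-1}$. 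For fixed $\xi$ and every $k$ with $C_0 M_k^{1-\rho^-/d} > 2|\xi|$, each $s$ in the sum satisfies $|\xi - s| \ge |s|/2 > \tfrac12 C_0 M_k^{1-\rho^-/d}$, so
\[
\big|\widehat{\mu_k}(\xi) - \widehat{\mu_{k-1}}(\xi)\big|
\le S_k \sup_{|\eta| \ge \frac12 C_0 M_k^{1-\rho^-/d}} |\widehat{\mu_{k-1}}(\eta)|
\lesssim_N M_k^{\tau d + \rho^+}\log(M_k)\, C_{k-1,N}\, M_k^{-(1-\rho^-/d)N}.
\]
Since $\rho^- < d$ we have $1-\rho^-/d > 0$, so taking $N$ large enough that $(1-\rho^-/d)N > \tau d + \rho^+ + 2$ (which we may also assume in Lemma~\ref{mu_k_lemma}) and then imposing the growth condition that $M_k$ be sufficiently large relative to $M_1,\dots,M_{k-1}$, we may arrange this bound to be $\le 2^{-k}$ for every $k$. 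Hence, for each fixed $\xi$, the tail of $\sum_k\big(\widehat{\mu_k}(\xi)-\widehat{\mu_{k-1}}(\xi)\big)$ is absolutely convergent, and the finitely many remaining terms are each bounded by $\mu_k(\RR^d)+\mu_{k-1}(\RR^d)\le 3$; so $\widehat{\mu_k}(\xi)$ converges.

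With all three hypotheses verified, Theorem~\ref{Levy} produces a finite non-zero Borel measure $\mu$ with $\mu_k \to \mu$ weakly; $\mu$ is non-zero because $\widehat{\mu}(0) = \lim_k \widehat{\mu_k}(0) \ge 1/2 > 0$. The main obstacle this argument must navigate is that the estimates of Lemma~\ref{mu_k_lemma} are stated only on $\ZZ^d$, whereas Theorem~\ref{Levy} requires control of $\widehat{\mu_k}$ at all real frequencies: the resolution is to obtain the decay of $\widehat{\mu_{k-1}}(\xi-s)$ needed above not from those integer-frequency estimates but from the qualitative (yet faster-than-polynomial) Schwartz decay of $\widehat{\mu_{k-1}}$, which is why the rate of growth of $(M_k)$ is allowed to be chosen depending on the previously fixed terms $M_1,\dots,M_{k-1}$.
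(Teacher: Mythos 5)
There is a fundamental mismatch here: the statement you were asked to prove is the L\'evy Continuity Theorem itself --- a general assertion about an arbitrary sequence $(\nu_k)$ of Borel measures with masses bounded between $c$ and $C$, tight, and with pointwise convergent Fourier transforms --- but your proposal instead \emph{invokes} that theorem as a black box and verifies its hypotheses for the particular sequence $(\mu_k)$ constructed in the paper. In other words, you have written a proof of Lemma \ref{weakconv} (the weak convergence of $(\mu_k)$), not of Theorem \ref{Levy}. As a proof of the stated theorem the argument is circular: every step is devoted to checking that $(\mu_k)$ satisfies the hypotheses of the very result whose proof is being requested, and nothing is said about why those hypotheses force weak convergence for a general $(\nu_k)$.

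What is actually needed is the standard compactness-plus-uniqueness argument (or a reduction to the classical probability-measure case, which is what the paper does, citing Billingsley and Bogachev and normalizing by $a_k = 1/\nu_k(\RR^d)$): tightness together with the uniform mass bound $\nu_k(\RR^d)\le C$ gives, via Prokhorov's theorem, that every subsequence of $(\nu_k)$ has a further subsequence converging weakly to some finite Borel measure; the pointwise limit $\lim_k\widehat{\nu_k}(\xi)$ must equal $\widehat{\nu}(\xi)$ for any such subsequential limit $\nu$ (weak convergence implies convergence of the Fourier transforms, since $x\mapsto e^{-2\pi i x\cdot\xi}$ is bounded and continuous); since a finite Borel measure on $\RR^d$ is determined by its Fourier transform, all subsequential limits coincide, so the whole sequence converges weakly to $\nu$; and $\nu(\RR^d)=\widehat{\nu}(0)=\lim_k\nu_k(\RR^d)\ge c>0$ shows $\nu\neq 0$. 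None of this appears in your proposal. (Separately, your verification of the hypotheses for $(\mu_k)$ is a reasonable alternative route to Lemma \ref{weakconv} --- you handle non-integer frequencies via the Fourier series of $F_{M_k}$ and the Schwartz decay of $\widehat{\mu_{k-1}}$ with $k$-dependent constants absorbed by the rapid growth of $(M_j)$, whereas the paper first proves convergence on $\ZZ^d$ and then extends to all of $\RR^d$ by Lemma \ref{convseries} --- but that is an answer to a different question.)
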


For a proof of this theorem when $\nu_k$ are probability measures, see  \cite[Section 26]{billingsley} for $d=1$ and \cite[Section 26]{billingsley} for $d \geq 1$. The general case follows by considering $a_k\nu_k$,  where $1/a_k = \nu_k(\RR^d)$. 
Alternatively, for a direct proof of the general case, see \cite[Section 8.8]{bogachev}. 

We also need the following lemma. 

\begin{lemma}\label{convseries}
Let $(\nu_k)$ be a sequence of Borel measures with support contained in $[-\frac{1}{2},\frac{1}{2}]^d$ and with $\sup_k \nu_k(\RR^d) < \infty$. If $(\widehat{\nu_k}(s))$ converges for each $s \in \mathbb{Z}^d$, then $(\widehat{\nu_k}(\xi))$ converges for each $\xi \in \mathbb{R}^d$. 
\end{lemma}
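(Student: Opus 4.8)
The plan is to reduce convergence of $\widehat{\nu_k}(\xi)$ at an arbitrary real frequency $\xi$ to the already-assumed convergence at integer frequencies $s \in \ZZ^d$ by exploiting that all the $\nu_k$ are supported in the fixed compact box $[-\tfrac12,\tfrac12]^d$ and have uniformly bounded total mass. The mechanism is a single-scale Fourier-series approximation: since each $\nu_k$ lives on $[-\tfrac12,\tfrac12]^d$, for a fixed test function we can replace the exponential $e^{-2\pi i x \cdot \xi}$ by a trigonometric polynomial with integer frequencies that approximates it uniformly on the support, and the error is controlled uniformly in $k$ by $\sup_k \nu_k(\RR^d) < \infty$.

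Concretely, first I would fix $\xi \in \RR^d$ and $\epsilon > 0$. Choose a smooth (or merely continuous) cutoff $\chi$ that equals $1$ on $[-\tfrac12,\tfrac12]^d$ and is supported in, say, $(-1,1)^d$, so that $\int e^{-2\pi i x\cdot\xi}\,d\nu_k(x) = \int \chi(x) e^{-2\pi i x\cdot\xi}\,d\nu_k(x)$ for every $k$. The function $g(x) = \chi(x) e^{-2\pi i x\cdot\xi}$ is continuous and compactly supported, hence (after viewing it as a function on the torus $\RR^d/2\ZZ^d$, or by rescaling to period $1$) its Fourier series converges uniformly; by Fejér/Stone–Weierstrass there is a trigonometric polynomial $P(x) = \sum_{\text{finite}} a_m e^{-2\pi i x \cdot m}$ with $m$ ranging over $\tfrac12\ZZ^d$ (or over $\ZZ^d$ after the appropriate dilation) such that $\sup_{x \in [-1,1]^d}|g(x) - P(x)| < \epsilon$. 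I would set this up so that the frequencies appearing in $P$ lie in $\ZZ^d$ — this is where the normalization in the excerpt's definition of $\widehat{\,\cdot\,}$ on $[0,1)^d$ is the natural one; working on the period-$1$ torus $[0,1)^d$ after translating the support into $[0,1)^d$ makes the frequencies genuinely integral.

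Then the standard three-term estimate finishes it: for $j,k$ large,
\[
|\widehat{\nu_j}(\xi) - \widehat{\nu_k}(\xi)| \leq |\widehat{\nu_j}(\xi) - \textstyle\int P\,d\nu_j| + |\textstyle\int P\,d\nu_j - \textstyle\int P\,d\nu_k| + |\textstyle\int P\,d\nu_k - \widehat{\nu_k}(\xi)|.
\]
The first and third terms are each $\leq \epsilon \sup_k \nu_k(\RR^d)$ by the uniform approximation and the mass bound. The middle term is a finite linear combination $\sum_m a_m(\widehat{\nu_j}(m) - \widehat{\nu_k}(m))$ of differences of integer-frequency Fourier coefficients, each of which tends to $0$ as $j,k \to \infty$ by hypothesis; since the sum is finite, the whole middle term is $< \epsilon$ for $j,k$ large. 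Hence $(\widehat{\nu_k}(\xi))$ is Cauchy in $\CC$, so it converges. I would then remark that this shows the hypotheses of the Lévy continuity theorem (Theorem \ref{Levy}) are met, which is how Lemma \ref{convseries} feeds into the proof of Lemma \ref{weakconv}.

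The only mildly delicate point — the "main obstacle," though it is routine — is bookkeeping the periodization: the $\nu_k$ are supported in $[-\tfrac12,\tfrac12]^d$, so a clean way is to translate by $(\tfrac12,\dots,\tfrac12)$ so the support lies in $[0,1]^d$, approximate $x \mapsto \chi(x)e^{-2\pi i x\cdot\xi}$ uniformly by a $\ZZ^d$-frequency trigonometric polynomial there (this is legitimate because a continuous compactly supported function on $[0,1]^d$ extends to a continuous $1$-periodic function after a further smooth truncation away from the boundary, and Fejér sums of that converge uniformly), and absorb the translation into the coefficients $a_m$ and into a harmless phase factor $e^{\pi i \sum \xi_i}$. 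None of this affects the three-term argument. I should make sure the implied constant $\sup_k \nu_k(\RR^d)$ is used consistently and that $\epsilon$ is chosen before $P$ and before $j,k$.
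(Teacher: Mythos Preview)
Your argument is correct. The paper takes a different but closely related route: rather than approximating $e^{-2\pi i x\cdot\xi}$ by a finite trigonometric polynomial and running a Cauchy argument, it multiplies $\nu_k$ by a Schwartz cutoff $\psi$ equal to $1$ on $[-\tfrac12,\tfrac12]^d$, invokes the identity $\widehat{\psi\nu_k}(\xi) = \sum_{s\in\ZZ^d}\widehat{\nu_k}(s)\,\widehat{\psi}(\xi-s)$ (a Poisson-summation formula for compactly supported measures, cited from Wolff's lecture notes), and then passes to the limit in $k$ by dominated convergence, using the rapid decay of $\widehat\psi$ together with the uniform bound $|\widehat{\nu_k}(s)|\le\sup_k\nu_k(\RR^d)$. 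The two arguments are the same idea in different clothing: the coefficients $\widehat\psi(\xi-s)$ are exactly the $\ZZ^d$-Fourier coefficients of the periodization of $x\mapsto\psi(x)e^{-2\pi i x\cdot\xi}$, so the paper's identity is your uniform trigonometric expansion carried to infinite order, with dominated convergence replacing your $\epsilon$-truncation and three-term estimate. The paper's version sidesteps the periodization bookkeeping you flagged (the identity absorbs it), at the cost of quoting an external formula; yours is more self-contained. Incidentally, the boundary concern you raised is genuine---the lemma is literally false if the $\nu_k$ may carry mass on $\partial[-\tfrac12,\tfrac12]^d$ (alternate point masses at $(\tfrac12,0,\dots,0)$ and $(-\tfrac12,0,\dots,0)$)---but in the paper's application the supports lie inside $\supp(\phi_0)\subset(-\tfrac12,\tfrac12)^d$, so neither proof needs to confront it.
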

\begin{proof}
Let $\psi$ be a Schwartz function on $\RR^d$. 
For every $\xi \in \mathbb{R}^d$, we have (see \cite[Eq. (110)]{wolff-book}) 
\[\widehat{\psi \nu_k}(\xi) = \sum_{s \in \mathbb{Z}^d} \widehat{\nu_k}(s) \widehat{\psi}(\xi - s).\] 
Because $\widehat{\psi}$ is a Schwartz function and because $\sup_k |\widehat{\nu_k}(\xi)| \leq \sup_k \nu_k(\RR^d) < \infty$ for all $\xi \in \mathbb{R}^d$, the Lebesgue dominated convergence theorem lets us take the limit as $k \to \infty$. Thus, for each  $\xi \in \RR^d$, 
\[\lim_{k \to \infty} \widehat{\psi \nu_k}(\xi) = \lim_{k \to \infty} \sum_{s \in \mathbb{Z}^d} \widehat{\nu_k}(s) \widehat{\psi}(\xi - s) = \sum_{s \in \mathbb{Z}^d} \lim_{k \to \infty} \widehat{\nu_k}(s) \widehat{\psi}(\xi - s) \]
and the limit is finite. If $\psi$ is equal to 1 on $[-\frac{1}{2},\frac{1}{2}]^d$, then $\psi \nu_k = \nu_k$ for all $k$. 
\end{proof}
\begin{proof}[Proof of Lemma \ref{weakconv}] 
By construction, $\supp(\mu_k) \subseteq \supp(\phi_0) \subseteq [-\frac{1}{2},\frac{1}{2}]^d$ for all $k$. This also implies the sequence $(\mu_k)$ is tight. 
By \eqref{mu-2}, whenever $k \geq n$ and 
$|s| \leq C_0 M_k^{1-\rho^{-}/d}$,
we have 
\[\left|\widehat{\mu_{k}}(s) - \widehat{\mu_{n}}(s) \right| \leq \sum_{j=n}^{\infty} M_j^{-100} \log(M_j).\]
In particular, $|\widehat{\mu_k}(0) - \widehat{\mu_0}(0)| \leq 1/2$. 
Since $\widehat{\mu_0}(0)= \widehat{\phi_0}(0) = 1$,  we have $1/2 \leq \mu_k(\RR^d) = \widehat{\mu_k}(0) \leq 3/2$ for all $k$. 
Furthermore, since the sum above goes to $0$ as $n \to \infty$, the sequence $(\mu_k(s))$ is Cauchy and hence convergent for each $s \in \ZZ^d$. 
Lemma \ref{convseries} implies $(\mu_k(\xi))$ converges for each $\xi \in \RR^d$. 
The Lévy continuity theorem now implies the desired result. 
\end{proof}

\subsection{Support of \texorpdfstring{$\mu$}{mu}}

\begin{prop}\label{support of mu lemma}
$\supp(\mu) \subseteq \supp(\phi_0) \cap E(K,B,\tau).$ 
\end{prop}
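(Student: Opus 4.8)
\textbf{Plan for the proof of Proposition \ref{support of mu lemma}.}

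The containment $\supp(\mu)\subseteq\supp(\phi_0)$ is immediate: every $\mu_k$ has density supported in $\supp(\phi_0)$, which is closed, and weak limits cannot escape a fixed closed set, so $\supp(\mu)\subseteq\supp(\phi_0)$. The real content is the inclusion $\supp(\mu)\subseteq E(K,B,\tau)$. The plan is to show that if $x\in\supp(\mu)$ then for infinitely many $k$ there is a prime ideal $I$ (drawn from $Q(M_k)$, or possibly $J_k$) with $\dist(x,I^{-1})\le |N(I)|^{-(\tau+1)/d}$, which suffices since $|N(I)|\le M_k^d$ forces $|N(I)|^{-(\tau+1)/d}\ge M_k^{-(\tau+1)}=\eta_k$, so it is enough to find, for infinitely many $k$, some $r\in I^{-1}$ with $|x-r|\lesssim \eta_k$ (after absorbing the $\ell^\infty$-to-$\ell^2$ constant, or rather choosing the bump width so that $\supp(\phi)\subseteq[-1,1]^d\subseteq B(0,\sqrt d)$ and noting $\eta_k\sqrt d \le |N(I)|^{-(\tau+1)/d}$ up to adjusting constants in the definition — one should be a little careful here and perhaps use that $\dist$ is the Euclidean distance while the support of the $r$-th bump is an $\ell^\infty$-ball of radius $\eta_k$).

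The key mechanism is that $F_k$ is built from the functions $\Phi_{I,\eta_k}$ whose supports are $\eta_k$-neighborhoods (in $\ell^\infty$) of the lattices $I^{-1}$. Concretely, the set $U_k := \bigcup_{I\in\{J_k\}\cup Q(M_k)}\supp(\Phi_{I,\eta_k})$ is exactly the locus where $F_k>0$, and $U_k$ is contained in the $\eta_k\sqrt d$-neighborhood of $\bigcup_{I}I^{-1}$. Since $\mu_m = \phi_0 F_{M_1}\cdots F_{M_m}\,dx$ for $m\ge k$, the density of $\mu_m$ vanishes outside $U_k$; hence $\mu_m(\RR^d\setminus U_k)=0$ for every $m\ge k$. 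First I would argue that, because $U_k$ is (a finite union of closed cubes, hence) closed and $\mu_m\to\mu$ weakly with $\mu_m$ supported in the closed set $U_k$ for all large $m$, we get $\mu(\RR^d\setminus U_k)=0$, i.e.\ $\supp(\mu)\subseteq U_k$. This holds for \emph{every} $k\ge 1$, so $\supp(\mu)\subseteq\bigcap_{k\ge 1}\bigcup_{K\ge k}U_K$ — but actually we get the stronger statement $\supp(\mu)\subseteq U_k$ for all $k$, so $\supp(\mu)\subseteq\bigcap_{k}U_k$, which certainly lies in the limsup set. Then for $x\in\supp(\mu)$ and each $k$, $x\in U_k$ means there is an ideal $I=I(k)\in\{J_k\}\cup Q(M_k)$ and $r\in I^{-1}$ with $|x-r|_{\infty}\le\eta_k$, hence $|x-r|\le\sqrt d\,\eta_k$. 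Since $|N(I)|\le M_k^d$ (this holds for $I\in Q(M_k)$; for $I=J_k$ we have $|N(J_k)|\le M_k^{d+\rho}$, and if $\rho\ge 0$ one instead uses a cruder bound or simply discards the $J_k$ terms — note $Q(M_k)$ is nonempty for all large $k$ by \eqref{QMsize}, and in fact one can avoid $J_k$ entirely by observing the $Q(M_k)$-part of $\supp(F_k)$ already... ) we obtain $\eta_k = M_k^{-(1+\tau)}\le |N(I)|^{-(1+\tau)/d}$, and after absorbing $\sqrt d$ into a harmless rescaling of the problem (or by noting $\sqrt d\,\eta_k \le |N(I)|^{-(1+\tau)/d}$ for $k$ large since $M_k\to\infty$), we conclude $\dist(x,I^{-1})\le |N(I)|^{-(1+\tau)/d}$. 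As $k$ ranges over all positive integers and the ideals $I(k)$ have $|N(I(k))|\ge M_k^d/2\to\infty$, these are infinitely many distinct ideals, so $x\in E(K,B,\tau)$.

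The main obstacle — really the only subtlety — is the passage from "$\mu_m$ supported in the closed set $U_k$ for all $m\ge k$" to "$\mu$ supported in $U_k$." This follows from the portmanteau theorem: for a closed set $C$ and weak convergence $\mu_m\rightharpoonup\mu$, $\limsup_m\mu_m(C)\le\mu(C)$ is the wrong direction, but $\mu(\RR^d\setminus U_k)\le\liminf_m\mu_m(\RR^d\setminus U_k)=0$ since $\RR^d\setminus U_k$ is open — that is the correct application (open sets: $\mu(G)\le\liminf\mu_m(G)$). So $\mu(\RR^d\setminus U_k)=0$, and since this holds for a single fixed open set, $\supp(\mu)\subseteq\overline{U_k}=U_k$. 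The remaining bookkeeping (the $\ell^\infty$ versus $\ell^2$ constant $\sqrt d$, and the slight mismatch between $M_k^d$ and $|N(I)|$ for $I\in Q(M_k)$ where $|N(I)|$ could be as small as $M_k^d/2$) is absorbed either by taking $M_0$ large or by the trivial observation that $|N(I)|^{-(1+\tau)/d}\ge M_k^{-(1+\tau)}\cdot (1/2)^{-(1+\tau)/d} \ge \sqrt d\, \eta_k$ once $M_k$ exceeds a constant depending only on $d$ and $\tau$, which is guaranteed by our standing assumption that $M_0$ is large.
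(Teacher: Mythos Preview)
Your approach is essentially the same as the paper's: show $\supp(\mu)\subseteq\supp(\phi_0)\cap\bigcap_k\supp(F_k)$ via weak convergence (the paper asserts this; you supply the portmanteau justification, which is fine), then read off the Diophantine condition from the structure of $\supp(F_k)$.

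Two points where the paper is cleaner and your sketch wobbles. First, to avoid the $J_k$ term entirely the paper simply restricts to \emph{even} $k$: for even $k$ one has $P_k=0$, so $F_k$ involves only the ideals in $Q(M_k)$ and there is no $J_k$-bump to worry about. Your parenthetical attempts to ``discard $J_k$'' for odd $k$ are not quite right (for a fixed odd $k$, a point of $\supp(F_k)$ might lie only in the $J_k$-part), but passing to even $k$ makes the issue disappear and still gives infinitely many distinct ideals since the $Q(M_k)$ are pairwise disjoint.

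Second, your handling of the $\sqrt{d}$ constant contains an error: you write $|N(I)|^{-(1+\tau)/d}\ge M_k^{-(1+\tau)}\cdot 2^{(1+\tau)/d}$, but the bound $N(I)\ge M_k^d/2$ gives the \emph{upper} bound $|N(I)|^{-(1+\tau)/d}\le 2^{(1+\tau)/d}\eta_k$, not a lower bound; and since $N(I)$ can be as large as $M_k^d$, the ratio $|N(I)|^{-(1+\tau)/d}/\eta_k$ can be arbitrarily close to $1$ regardless of how large $M_k$ is. The clean fix is not to take $M_k$ large but to choose $\phi$ at the outset with $\supp(\phi)\subseteq B(0,1)$ rather than $[-1,1]^d$; then the bump at $r$ has Euclidean radius $\eta_k$ and the inequality $\dist(x,I^{-1})\le\eta_k\le N(I)^{-(1+\tau)/d}$ holds on the nose. (The paper itself glosses over this $\sqrt{d}$ factor.)
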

\begin{proof}
Because $\mu_k = \phi_0 F_{1} \cdots F_{k}$ and because $\mu$ is the weak limit of $\mu_k$,  
\[\supp (\mu) \subseteq \bigcap_{k=1}^{\infty} \supp (\mu_k) =  \supp(\phi_0) \cap \bigcap_{k=1}^{\infty} \supp (F_{k}).\]
For even values of $k$, 
$$\supp (F_k) = \bigcup_{I \in Q(M_k)} \supp(\Phi_{I, \eta_k}).$$
If $I \in Q(M_k)$ and $\Phi_{I, \eta_k}(x) > 0$, then 
$\dist(x,I^{-1}) \leq \eta_k \leq N(I)^{-(\tau+1)/d}.$  
Since $I^{-1}$ is discrete, it follows that 
$$
\supp(\Phi_{I,\eta_k}) \subseteq \cbr{x \in \RR^d : \dist(x,I^{-1}) \leq N(I)^{-(\tau+1)/d}} 
$$
for every $I \in Q(M_k)$. 
By putting everything together and using that the sets $Q(M_k)$ are disjoint, we have 
$$
\bigcap_{k=1}^{\infty} \supp(F_k) \subseteq E(K,B,\tau). 
$$
\end{proof}

\subsection{Fourier Decay of \texorpdfstring{$\mu$}{mu}}

\begin{prop}\label{Prop_mu_decay}
For all  $s \in \RR^d$ with $|s| \geq 2$, 
\begin{equation}\label{mu_decay}
|\widehat{\mu}(s)| \lesssim 
\log(|s|)|s|^{-\frac{d-\rho^+}{1 + \tau}}.
\end{equation}
\end{prop}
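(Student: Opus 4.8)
The plan is to deduce the Fourier decay of $\mu$ from the decay estimates for the approximating measures $\mu_k$ established in Lemma \ref{mu_k_lemma}, by choosing, for each frequency $s$, the appropriate scale $k$ at which to compare $\mu$ with $\mu_k$. First I would record that since $\mu$ is the weak limit of $(\mu_k)$ and all the measures are supported in $[-\tfrac12,\tfrac12]^d$, the argument of Lemma \ref{convseries} (via the Poisson-type summation formula $\widehat{\psi\nu_k}(\xi)=\sum_{s\in\ZZ^d}\widehat{\nu_k}(s)\widehat{\psi}(\xi-s)$ with $\psi\equiv 1$ on the cube) lets us pass the estimates for $\widehat{\mu_k}$ on $\ZZ^d$ to estimates for $\widehat{\mu}$ on all of $\RR^d$, at the cost of harmless constants and logarithmic factors; so it suffices to bound $|\widehat{\mu}(s)|$ for $s\in\ZZ^d$ and then absorb the interpolation over the unit cube. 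Alternatively one can first prove the bound on $\ZZ^d$ and then note that for general $\xi$ one writes $\widehat\mu(\xi)=\sum_{s}\widehat\mu(s)\widehat\psi(\xi-s)$ and uses the rapid decay of $\widehat\psi$ together with the $\ZZ^d$-bound.

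The core of the argument is scale selection. Given $s\in\ZZ^d$ with $|s|$ large, choose $k=k(s)$ to be the largest integer with $|s|\geq \tfrac12 C_0 M_k^{1-\rho^-/d}$; equivalently, $M_k \lesssim |s|^{1/(1-\rho^-/d)} \lesssim M_{k+1}$ up to constants. For this $k$, estimate
\[
|\widehat{\mu}(s)| \leq |\widehat{\mu_k}(s)| + |\widehat{\mu}(s) - \widehat{\mu_k}(s)|.
\]
For the first term, since $|s|\geq \tfrac12 C_0 M_k^{1-\rho^-/d}$, inequality \eqref{mu-3'} gives $|\widehat{\mu_k}(s)|\lesssim M_k^{-d+\rho^+}\log|s|$. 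For the second term, telescoping and using \eqref{mu-2} at scales $j>k$ — valid because, by maximality of $k$, we have $|s| < \tfrac12 C_0 M_{k+1}^{1-\rho^-/d} \leq \tfrac12 C_0 M_j^{1-\rho^-/d}$ for all $j\geq k+1$ — yields
\[
|\widehat{\mu}(s) - \widehat{\mu_k}(s)| \leq \sum_{j=k+1}^{\infty} |\widehat{\mu_j}(s) - \widehat{\mu_{j-1}}(s)| \lesssim \sum_{j=k+1}^{\infty} M_j^{-(N-2d)}\log(M_j) \lesssim M_{k+1}^{-(N-2d)}\log(M_{k+1}),
\]
where I use the rapid growth of $(M_j)$ for the last step. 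Since $N\geq 100d$, this second term is far smaller than the first (it is $\lesssim M_{k+1}^{-\text{huge}}$, which, even accounting for $M_{k+1}$ possibly being a large power of $M_k$, is dominated by $M_k^{-d+\rho^+}\log|s|$ once $M_k$ is large). Hence $|\widehat{\mu}(s)|\lesssim M_k^{-d+\rho^+}\log|s|$.

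Finally I would convert the bound in terms of $M_k$ into a bound in terms of $|s|$. Using $M_k \sim |s|^{1/(1-\rho^-/d)} = |s|^{d/(d-\rho^-)}$ from the scale selection, and the identity $d/(d-\rho^-) \cdot (d-\rho^+)/d = (d-\rho^+)/(d-\rho^-)$, I need to check that $(d-\rho^+)/(d-\rho^-) = (d-\rho^+)/(1+\tau) \cdot (1+\tau)/(d-\rho^-)$ reconciles with the target exponent $(d-\rho^+)/(1+\tau)$. Here one must be slightly careful: the scale selection threshold uses $M_k^{1-\rho^-/d}$, not $M_k^{1+\tau}$, so the exponent works out to $M_k^{-(d-\rho^+)} = |s|^{-(d-\rho^+)\cdot d/(d-\rho^-)}$, and since $d - \rho^- = d - \rho^+ $ when... no — I should instead observe that $\rho^+ + \rho^- = |\rho|$ is not what is needed; rather, exactly one of $\rho^+,\rho^-$ is zero, so $d-\rho^- \geq d-\rho^+$ always... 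This exponent bookkeeping — reconciling the threshold scale $M_k^{1-\rho^-/d}$ appearing in \eqref{mu-3'} with the claimed decay rate $(d-\rho^+)/(1+\tau)$ — is the one genuinely delicate point, and I expect the clean resolution is that one should instead select $k$ so that $|s| \sim M_k^{1+\tau}$ (the natural scale $\eta_k^{-1} = M_k^{1+\tau}$), apply \eqref{mu-3'} in the regime $\tfrac12 C_0 M_k^{1-\rho^-/d} \leq |s| \leq 2 M_k^{1+\tau}$ and \eqref{mu-4'} for $|s| \geq 2 M_k^{1+\tau}$, chaining both so that at every frequency one has $|\widehat\mu(s)| \lesssim \log|s|\,|s|^{-(d-\rho^+)/(1+\tau)}$; the competition between the $M_k^{-d+\rho^+}$ gain and the $(M_k^{1+\tau}/|s|)^{N-d}$ loss in \eqref{mu-4'} is what pins down the exponent, and this interplay is the main obstacle to get exactly right. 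Once the $\ZZ^d$ estimate holds with the correct exponent, the extension to $\RR^d$ via Schwartz comparison is routine.
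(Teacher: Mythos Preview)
Your overall strategy --- reduce to $\ZZ^d$, pick a scale $k$ for each $s$, bound $|\widehat{\mu_k}(s)|$ via \eqref{mu-3'} or \eqref{mu-4'}, and telescope to $\mu$ via \eqref{mu-2} --- is exactly the paper's. The unresolved exponent bookkeeping you flag is a genuine gap, and your instinct for the fix is correct; here is how it closes.

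The claim $M_k\sim |s|^{d/(d-\rho^-)}$ is false: your scale selection gives only $M_k^{1-\rho^-/d}\lesssim |s|\lesssim M_{k+1}^{1-\rho^-/d}$, and since $M_{k+1}$ is uncontrolled in terms of $M_k$, $|s|$ can be vastly larger than $M_k^{1+\tau}$. In that regime \eqref{mu-3'} yields only $M_k^{-(d-\rho^+)}$, far weaker than the target. So after selecting $k$ you must split further: if $|s|\le 2M_k^{1+\tau}$ (the paper's Case~2), then \eqref{mu-3'} gives $M_k^{-(d-\rho^+)}\log|s|$ and it is the \emph{upper} bound on $|s|$ that yields $M_k^{-(d-\rho^+)}\lesssim |s|^{-(d-\rho^+)/(1+\tau)}$; if $|s|\ge 2M_k^{1+\tau}$ (Case~3), then \eqref{mu-4'} together with
\[
M_k^{-d+\rho^+}\Bigl(\tfrac{M_k^{1+\tau}}{|s|}\Bigr)^{N-d}\le M_k^{-d+\rho^+}\Bigl(\tfrac{M_k^{1+\tau}}{|s|}\Bigr)^{(d-\rho^+)/(1+\tau)}=|s|^{-(d-\rho^+)/(1+\tau)}
\]
gives the bound directly. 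The paper packages all of this as an induction on $k$, proving $|\widehat{\mu_k}(s)|\lesssim \log|s|\,|s|^{-(d-\rho^+)/(1+\tau)}$ uniformly in $k$; your telescoping step is its Case~1, with the inductive hypothesis invoked at the scale $j_0$ where $|s|$ first lands in the Case~2/Case~3 window. The inductive framing is what sidesteps the $M_k$-versus-$M_{k+1}$ ambiguity that derailed your direct conversion.
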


\begin{proof}
Since $\supp(\mu) \subseteq [-1/2,1/2]^d$, by a standard argument (see, e.g., \cite[Lemma 1, p.252-253]{kahane-book} or \cite[Lemma 9.A.4, p.69]{wolff-book}), we only need to prove \eqref{mu_decay} for $s \in \ZZ^d$. 
Moreover, it 
suffices to prove  that, for all integers $k \geq 0$, 
\begin{equation}\label{mu_k_decay}
|\widehat{\mu_k}(s) | \lesssim 
\log(|s|)|s|^{-\frac{d-\rho^+}{1 + \tau}} 
\quad \text{for all $s \in \ZZ, |s| \geq 2$}
, 
\end{equation}
where the implicit constant  does not depend on $k$.

We will show \eqref{mu_k_decay} by induction. For $k = 0$, the estimate \eqref{mu_k_decay} follows 
%by taking a  sufficiently large implied constant, 
because $\mu_0 = \phi_0$ is a Schwartz function. Suppose $k \geq 1$ and suppose the statement \begin{equation}\label{mu_j_decay}
|\widehat{\mu_j}(s) | \lesssim 
\log(|s|)|s|^{-\frac{d-\rho^+}{1 + \tau}} \quad \text{for all $s \in \ZZ^d, |s| \geq 2$} 
\end{equation} holds for each $0 \leq j \leq k-1$ 
with implicit  constant independent of $j$. We consider three cases. 

Case 1: $2 \leq |s| \leq 
\frac{1}{2}C_0 M_k^{1-\rho^{-}/d}$. 
Let $0 \leq j_0 \leq k-1$ be such that 
$
\frac{1}{2}C_0 M_{j_0}^{1-\rho^{-}/d} 
\leq |s| \leq 
\frac{1}{2}C_0 M_{j_0+1}^{1-\rho^{-}/d}$,
taking $j_0 = 0$ if 
$|s| \leq 
\frac{1}{2}C_0 M_{0}^{1-\rho^{-}/d}$. 
By applying the triangle inequality, inequality \eqref{mu-2}, and inequality \eqref{mu_j_decay} with $j = j_0$, we have
\begin{align*}
|\widehat{\mu_k}(s)| & \leq |\widehat{\mu_{j_0}}(s)| + \sum_{j = j_0 + 1}^k |\widehat{\mu_j}(s) - \widehat{\mu_{j-1}}(s)| \\
& \lesssim 
\log(|s|)
|s|^{-\frac{d-\rho^+}{1 + \tau}}
+ \sum_{j=j_0 + 1}^k 
%%%M_j^{-(N - 2d-1)}\log(M_j)
M_j^{-(N - 2d)}\log(M_j)
\\
& \lesssim 
\log(|s|)
|s|^{-\frac{d-\rho^+}{1 + \tau}}. 
\end{align*}

Case 2:  
$\frac{1}{2}C_0 M_{k}^{1-\rho^{-}/d} 
\leq |s| \leq 2M_k^{1+\tau}$.  Here \eqref{mu-3'} immediately implies \eqref{mu_k_decay}. 

Case 3: $|s| \geq 2M_k^{1+\tau}$. By our choice of $N$, we have 
$N-d \geq (d-\rho^+)/(1+\tau)$. Then \eqref{mu-4'} gives 
\begin{align*}
|\widehat{\mu}_k(s)| 
&\lesssim 
%%\log(M_k)
\log |s| 
M_k^{-d+\rho^+}\left( \frac{M_k^{1+\tau}}{|s|} \right)^{N-d}
\\
&\leq 
%%\log(M_k)
\log |s| 
M_k^{-d+\rho^+}\left( \frac{M_k^{1+\tau}}{|s|} \right)^{\frac{d-\rho^+}{1+\tau}}
\\
&=
%%\log(M_k)
\log(|s|)
|s|^{-\frac{d-\rho^+}{1+\tau}}.
\end{align*}
This implies \eqref{mu_k_decay} for $|s| \geq 2M_k^{1+\tau}$. 
\end{proof}

\section{Regularity of \texorpdfstring{$\mu$}{mu}}\label{regularity_section}

In this section, we prove that $\mu$ satisfies (ii) of Theorem \ref{mainthm3}. We need several preparatory lemmas before getting to the main result, Proposition \ref{Prop_mu_reg}. 

\begin{lemma}\label{separation facts}
    Let $k \in \NN$.
\begin{enumerate}[(a)]
    \item Suppose $r_1, r_2 \in J_k^{-1}$. If $r_1 \neq r_2$,  then $|r_1 - r_2| \gtrsim M_{k}^{-1-\frac{\rho}{d}}$. 
    \item 
    %Suppose $(I_1,r_1) \in Q(M_k) \times I_1^{-1}$ and $(I_2,r_2) \in Q(M_k) \times I_2^{-1}$. 
    Suppose $I_1,I_2 \in Q(M_k)$, $r_1 \in I_1^{-1}$, $r_2 \in I_2^{-1}$. 
    If $r_1 \neq r_2$, then $|r_1 - r_2| \gtrsim M_{k}^{-2}$. 
     \item Suppose $I_1,I_2 \in Q(M_k)$, 
     $r_1 \in I_1^{-1}$, $r_2 \in I_2^{-1}$. 
     If $r_1 = r_2$ and $I_1 \neq I_2$, then 
     $r_1=r_2 \in \OK \isom \ZZ^d$ and (consequently) 
     $\dist(r_1,\supp(\phi_0))=
     \dist(r_2,\supp(\phi_0)) > 3\eta_k$. 
\end{enumerate}
\end{lemma}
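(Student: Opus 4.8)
The plan is to obtain (a) and (b) as immediate consequences of the separation lemma (Lemma~\ref{separation lemma}), and to prove (c) by a coprimality argument for the fractional ideals $I_1^{-1}$ and $I_2^{-1}$ followed by an elementary distance estimate. None of the three parts should require any genuinely new idea beyond quoting Lemma~\ref{separation lemma} and one standard fact about prime ideals.

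For (a), I would apply Lemma~\ref{separation lemma} with $I_1 = I_2 = J_k$. Since $r_1, r_2 \in J_k^{-1}$ are distinct, the lemma gives
\[
C_B |r_1 - r_2| \geq N(J_k \cap J_k)^{-1/d} = N(J_k)^{-1/d}.
\]
Because $J_k \in Q(M_k^{1+\rho/d})$ we have $N(J_k) \leq M_k^{d+\rho}$, hence $N(J_k)^{-1/d} \geq M_k^{-1-\rho/d}$ and therefore $|r_1 - r_2| \geq C_B^{-1} M_k^{-1-\rho/d} \gtrsim M_k^{-1-\rho/d}$. Part (b) is the same computation with the given $I_1, I_2 \in Q(M_k)$: Lemma~\ref{separation lemma} yields $C_B|r_1 - r_2| \geq (N(I_1) N(I_2))^{-1/d}$, and $N(I_1), N(I_2) \leq M_k^d$ gives $|r_1 - r_2| \geq C_B^{-1} M_k^{-2} \gtrsim M_k^{-2}$.

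For (c), the key point I would use is the identity $I_1^{-1} \cap I_2^{-1} = (I_1 + I_2)^{-1}$, valid for any nonzero fractional ideals $I_1, I_2$: for $x \in K$ we have $x(I_1 + I_2) \subseteq \OK$ if and only if $xI_1 \subseteq \OK$ and $xI_2 \subseteq \OK$. Now $I_1, I_2 \in Q(M_k)$ are distinct prime ideals of $\OK$, hence maximal, so $I_1 + I_2 = \OK$; consequently $I_1^{-1} \cap I_2^{-1} = \OK^{-1} = \OK$. Since $r_1 = r_2$ lies in $I_1^{-1} \cap I_2^{-1}$, we conclude $r_1 = r_2 \in \OK$, which under the integral-basis identification is a point of $\ZZ^d$.

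It then remains to estimate $\dist(r_1, \supp(\phi_0))$. Every coordinate of every point of $\supp(\phi_0)$ has absolute value between $1/8$ and $3/8$, so it lies at distance at least $1/8$ from $\ZZ$; since $r_1 \in \ZZ^d$, this gives $|r_1 - x| \geq 1/8$ for all $x \in \supp(\phi_0)$, i.e.\ $\dist(r_1, \supp(\phi_0)) \geq 1/8$. Finally $3\eta_k = 3M_k^{-(1+\tau)} < 1/8$ once $M_0$ (and hence every $M_k$) is taken large enough, which yields the claimed strict inequality. The only step that is not pure bookkeeping is the fractional-ideal identity $I_1^{-1} \cap I_2^{-1} = (I_1 + I_2)^{-1}$ together with the coprimality $I_1 + I_2 = \OK$ of distinct prime ideals, and I expect this to be the only (minor) obstacle.
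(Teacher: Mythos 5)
Your proposal is correct and follows essentially the same route as the paper: parts (a) and (b) by direct application of Lemma~\ref{separation lemma} together with the norm bounds on $J_k$ and on ideals in $Q(M_k)$, and part (c) by coprimality of distinct prime ideals (your phrasing via the identity $I_1^{-1}\cap I_2^{-1}=(I_1+I_2)^{-1}$ is just a slight repackaging of the paper's one-line computation $r = r\cdot 1 \in rI_1 + rI_2 \subseteq \OK$). The concluding distance estimate from the support of $\phi_0$ matches the paper as well.
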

\begin{proof}
Lemma \ref{separation lemma} directly implies (a) and (b). 
Now we prove (c). 
Since $I_1$ and $I_2$ are distinct prime ideals of $\OK$, they are relatively prime, i.e., $1 \in I_1+I_2$.
Set $r = r_1=r_2$. 
Since $r \in I_1^{-1}$ and $r \in I_2^{-1}$, we have $rI_1 \subseteq \OK$ and $rI_2 \subseteq \OK$. 
Thus 
$
r = r \cdot 1 \in r(I_1+I_2) = rI_1 + rI_2 \subseteq 
\OK \isom \ZZ^d. 
$
Recalling that $\supp(\phi_0) \subseteq ([-3/8,-1/8] \cap [1/8,3/8])^d$ gives the result. 
\end{proof}

\begin{lemma}\label{Fk nonzero terms lemma}
Let $k \in \NN$. Let  $G_k:\RR^d \to \RR$. 
Suppose $\supp(G_k)$ has diameter $\leq 2\eta_k$ and 
suppose $\supp(G_k)$ intersects $\supp(\phi_0)$. 
Then there exists $I \in Q(M_k)$, $r' \in I^{-1}$, and $r'' \in J_k^{-1}$ 
such that, for all $x \in \RR^d$, 
\begin{align*}
F_k(x) G_k(x) = c_k \eta_k^{-d} 
\rbr{ \phi\rbr{\dfrac{x-r'}{\eta_k}} + P_k \phi\rbr{\dfrac{x-r''}{\eta_k}}}
G_k(x). 
\end{align*}
\end{lemma}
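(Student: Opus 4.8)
The plan is to unwind the definition of $F_k$ from \eqref{Fk defn} and show that, under the support hypotheses on $G_k$, at most one term from the sum $\sum_{I \in Q(M_k)} \Phi_{I,\eta_k}$ and at most the single term $P_k \Phi_{J_k,\eta_k}$ can be nonzero on $\supp(G_k)$, and moreover each of those two surviving $\Phi$'s contributes a single translated bump $\eta_k^{-d}\phi((x-\wc)/\eta_k)$. The key point is that $\supp(G_k)$ has diameter $\leq 2\eta_k$, so it lies in a ball of radius $\eta_k$, while the centers $r$ appearing in the various $\Phi_{I,\eta_k}$ (for $I \in Q(M_k) \cup \{J_k\}$) are separated on the scale $M_k^{-2}$ or $M_k^{-1-\rho/d}$, both of which are enormous compared to $\eta_k = M_k^{-(1+\tau)}$ once $M_k$ is large (using $\tau > 1$).

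First I would expand $F_k(x)G_k(x) = c_k\big(P_k \sum_{r \in J_k^{-1}} \eta_k^{-d}\phi((x-r)/\eta_k) + \sum_{I \in Q(M_k)} \sum_{r \in I^{-1}} \eta_k^{-d}\phi((x-r)/\eta_k)\big)G_k(x)$. A term $\phi((x-r)/\eta_k)G_k(x)$ is not identically zero only if the $\ell^\infty$-ball of radius $\eta_k$ about $r$ meets $\supp(G_k)$; since $\supp(G_k)$ has diameter $\leq 2\eta_k$, any two centers $r, \tilde r$ with this property satisfy $|r - \tilde r| \lesssim \eta_k$. Now I invoke Lemma \ref{separation facts}: by (a), two distinct centers in $J_k^{-1}$ are $\gtrsim M_k^{-1-\rho/d}$ apart; by (b), two distinct centers arising from $Q(M_k)$-ideals (whether from the same or different ideals) are $\gtrsim M_k^{-2}$ apart. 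For $M_k$ sufficiently large both lower bounds dominate $C\eta_k = CM_k^{-(1+\tau)}$, since $1+\tau > 2 > 1 + \rho/d$ fails in general — here I need to be a little careful: $1 + \rho/d$ can be as large as $2$ (when $\rho \to d$), so I should instead note $\tau > 1$ gives $1 + \tau > 2 \geq 1 + \rho/d$ when $\rho \le d$, with the edge case $\rho = d$ excluded since $\rho < d$; in all cases $M_k^{-(1+\tau)}$ is the smaller quantity for large $M_k$. Hence among centers from $J_k^{-1}$ at most one survives (call it $r''$), and among centers from all $I \in Q(M_k)$ combined at most one survives (call it $r'$, belonging to some $I \in Q(M_k)$).

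The one subtlety is case (c) of Lemma \ref{separation facts}: a center $r$ could lie in $I_1^{-1} \cap I_2^{-1}$ for two distinct $I_1, I_2 \in Q(M_k)$, in which case it is counted in two terms of the outer sum. But (c) tells us that then $r \in \OK \cong \ZZ^d$ and $\dist(r, \supp(\phi_0)) > 3\eta_k$, so the bump $\phi((x-r)/\eta_k)$ (supported in the $\eta_k$-ball about $r$) does not meet $\supp(\phi_0) \supseteq \supp(G_k) \cap \supp(\phi_0)$ — wait, $\supp(G_k)$ only intersects $\supp(\phi_0)$, it need not be contained in it; but $\supp(G_k)$ has diameter $\le 2\eta_k$, so it lies within distance $2\eta_k$ of $\supp(\phi_0)$, hence within distance $2\eta_k + \eta_k = 3\eta_k$ one still cannot reach such an $r$. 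So these doubly-counted centers contribute nothing, and the surviving center $r'$ (if any) lies in exactly one $I^{-1}$. I expect this interplay — ruling out the lattice-point centers via the $\phi_0$-support gap while correctly accounting for the diameter-versus-containment distinction — to be the main thing to get right; the separation estimates themselves are immediate from Lemma \ref{separation facts}. Finally, if no center from $J_k^{-1}$ survives I may take any $r'' \in J_k^{-1}$ (the corresponding term is zero anyway since $P_k\phi((x-r'')/\eta_k)G_k(x) \equiv 0$), and similarly choose an arbitrary $I$ and $r'$ if the $Q(M_k)$-sum vanishes on $\supp(G_k)$; then the displayed identity holds for all $x$, completing the proof.
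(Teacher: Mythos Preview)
Your proposal is correct and follows essentially the same approach as the paper: expand $F_kG_k$ via \eqref{Phi defn} and \eqref{Fk defn}, then use Lemma \ref{separation facts}(a) to isolate at most one term in the $J_k^{-1}$ sum, Lemma \ref{separation facts}(b) to isolate at most one center among all $I \in Q(M_k)$, and Lemma \ref{separation facts}(c) together with the hypothesis that $\supp(G_k)$ meets $\supp(\phi_0)$ to rule out a center lying in two distinct $I^{-1}$'s. Your handling of the diameter-versus-containment issue in case (c) and the observation that one may choose $r'$, $r''$ arbitrarily when the corresponding sum vanishes are exactly the small points the paper leaves implicit.
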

\begin{proof}
For brevity, write $\phi_{r,\eta_k}(x)=\phi((x-r)/\eta_k)$. 
By \eqref{Phi defn} and \eqref{Fk defn}, 
\begin{align}\label{Fk expanded 3}
F_k G_k = 
\sum_{r \in J_k^{-1}} P_k c_k  \eta_k^{-d} \phi_{r,\eta_k} G_k + 
\sum_{I \in Q(M)} \sum_{r \in I^{-1}} c_k  \eta_k^{-d} \phi_{r,\eta_k} G_k. 
\end{align}
We assume $(M_k)$ has been chosen so that  $\eta_k=M_k^{-(1+\tau)}$ is much smaller than $M_{k}^{-2}$
and $M_k^{-1-\rho/d}$. 
Note $\supp(\phi_{r,\eta_k}) \subseteq B(r,\eta_k)$. 
By Lemma \ref{separation facts}(a), $\supp(G_k)$ intersects $\supp(\phi_{r,\eta_k})$ for at most one $r \in J_k^{-1}$. 
Thus  
the first sum in \eqref{Fk expanded 3} has at most one non-zero term. 
Now consider two distinct pairs $(I_1,r_1)$ and $(I_2,r_2)$, where 
$I_1,I_2 \in Q(M_k)$, $r_1 \in I_1^{-1}$, $r_2 \in I_2^{-1}$. 
If $r_1 \neq r_2$, Lemma \ref{separation facts}(b) implies 
$\supp(G_k)$ intersects at most one of $\supp(\phi_{r_1,\eta_k})$ and $\supp(\phi_{r_2,\eta_k})$. 
If $r_1 = r_2$ and $I_1 \neq I_2$, Lemma \ref{separation facts}(c) implies 
$$
\dist(\supp(\phi_{r_1,\eta_k}),\supp(\phi_0))
= 
\dist(\supp(\phi_{r_2,\eta_k}),\supp(\phi_0))
> 
\eta_k,$$
so $\supp(G_k)$ intersects neither of $\supp(\phi_{r_1,\eta_k})$ and $\supp(\phi_{r_2,\eta_k})$. 
We conclude that the second sum in \eqref{Fk expanded 3} has at most one non-zero term. 
\end{proof}

\begin{lemma}\label{Fphi_decay}
Let $\psi$ be a Schwartz function with compact support. 
Let $k \in \NN$. Let $B = B(x_0,\eta_k)$. Let $\psi_{B}(x) = \psi((x-x_0)/\eta_k)$. 
Suppose $\supp(\psi_B)$ intersects $\supp(\phi_0)$.
For all $Z \geq 0$ and $s \in \ZZ^d$, 
$$
\widehat{F_k \psi_{B}}(s)
\lesssim_{Z,\phi, \psi}c_k (1+P_k)(1+\eta_k|s|)^{-Z}.
$$
We emphasize that the implied constant does not depend on $x_0$ or $k$. 
\end{lemma}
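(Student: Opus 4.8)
The plan is to first reduce $F_k\psi_B$ to a sum of at most two rescaled copies of $\phi$ and then estimate the resulting Fourier integral by differentiating. I would invoke Lemma \ref{Fk nonzero terms lemma} with $G_k = \psi_B$; its hypothesis that $\supp(G_k)$ meets $\supp(\phi_0)$ is exactly what we assumed about $\psi_B$. Strictly speaking that lemma is stated for $G_k$ whose support has diameter $\leq 2\eta_k$, whereas $\supp(\psi_B)$ has diameter $2R_\psi\eta_k$ if $\supp(\psi)\subseteq B(0,R_\psi)$; but the only property used in its proof is that this diameter is much smaller than the separations $M_k^{-2}$ and $M_k^{-1-\rho/d}$ of Lemma \ref{separation facts}, which remains true for $M_k$ large since $\eta_k = M_k^{-(1+\tau)}$ and $R_\psi$ is a fixed constant. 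So the same argument produces $I \in Q(M_k)$, $r'\in I^{-1}$ and $r''\in J_k^{-1}$ with
\[
F_k\psi_B = c_k\eta_k^{-d}\Bigl(\phi_{r',\eta_k} + P_k\,\phi_{r'',\eta_k}\Bigr)\psi_B, \qquad \phi_{r,\eta_k}(x) := \phi\!\left(\tfrac{x-r}{\eta_k}\right),
\]
where a sum contributing no surviving term simply drops out, making the bound only easier.

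Next I would estimate, uniformly in $r$, $x_0$ and $k$, the quantity $\widehat{\phi_{r,\eta_k}\psi_B}(s)$. Substituting $x = x_0 + \eta_k u$ turns it into $\eta_k^d\, e(s\cdot x_0)\,\widehat{g_w}(\eta_k s)$, where $g_w(u) = \phi(u+w)\psi(u)$ and $w = (x_0-r)/\eta_k$. The function $g_w$ is $C^\infty$ with support inside the fixed compact set $\supp(\psi)$, and by the Leibniz rule together with the translation-invariance of the sup-norms of the derivatives of $\phi$, all derivatives of $g_w$ up to any fixed order are bounded by a constant depending only on $\phi$, $\psi$ and that order, with no dependence on $w$. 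Applying powers of the Laplacian (or integration by parts) then gives $|\widehat{g_w}(\xi)|\lesssim_{Z,\phi,\psi}(1+|\xi|)^{-Z}$ uniformly in $w$, hence $|\widehat{\phi_{r,\eta_k}\psi_B}(s)|\lesssim_{Z,\phi,\psi}\eta_k^d(1+\eta_k|s|)^{-Z}$. Feeding this into the displayed identity gives
\[
|\widehat{F_k\psi_B}(s)| \le c_k\eta_k^{-d}\Bigl(|\widehat{\phi_{r',\eta_k}\psi_B}(s)| + P_k\,|\widehat{\phi_{r'',\eta_k}\psi_B}(s)|\Bigr) \lesssim_{Z,\phi,\psi} c_k(1+P_k)(1+\eta_k|s|)^{-Z},
\]
which is the claim.

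The substantive point — and the only thing I expect to require care — is tracking that every implied constant is independent of $x_0$ and $k$. After the rescaling, all of the $x_0$-dependence is absorbed into the unimodular factor $e(s\cdot x_0)$ and into the translate $w$ of $g_w$, and the Schwartz-type decay of $\widehat{g_w}$ is uniform over all translates $w$; the $k$-dependence likewise disappears once the problem is placed at unit scale, the remaining bound being in terms of $\eta_k|s|$. The mild extension of Lemma \ref{Fk nonzero terms lemma} to supports of diameter $O_\psi(\eta_k)$ is the only bookkeeping subtlety, and it follows immediately from the standing assumption that $(M_k)$ grows rapidly.
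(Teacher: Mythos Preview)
Your proof is correct. Both you and the paper begin by invoking Lemma \ref{Fk nonzero terms lemma} to reduce to estimating the Fourier transform of a single product $\phi((x-r)/\eta_k)\,\psi((x-x_0)/\eta_k)$, but from there the arguments diverge. The paper writes this Fourier transform as a convolution $\widehat f * \widehat g$ and splits the convolution integral into the regions $\{|t|\geq |s|/2\}$ and $\{|s-t|\geq |s|/2\}$, using the separate Schwartz decay of $\widehat\phi$ and $\widehat\psi$ on each piece. You instead rescale to unit scale, reducing to the uniform Schwartz decay of the family $\widehat{g_w}$ with $g_w(u)=\phi(u+w)\psi(u)$, which follows from integration by parts and the translation-invariance of the sup-norms of the derivatives of $\phi$. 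Your route is a bit more direct and makes the uniformity in $x_0$ and $k$ transparent; the paper's route is more modular in that it treats $\phi$ and $\psi$ symmetrically via their individual transforms. You were also more careful than the paper in flagging that $\supp(\psi_B)$ has diameter $O_\psi(\eta_k)$ rather than exactly $2\eta_k$, and in noting that the proof of Lemma \ref{Fk nonzero terms lemma} goes through unchanged under the standing growth assumption on $(M_k)$.
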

\begin{proof}
In light of Lemma \ref{Fk nonzero terms lemma}, it suffices to estimate 
$\widehat{fg}(s)$, where $f(x) = \phi\rbr{({x-r})/{\eta_k}}$ and $g(x) = \psi\rbr{({x-x_0})/{\eta_k}}$. 
Note
$$
|\widehat{fg}(s)| 
= 
|\widehat{f} \ast \widehat{g}(s)| 
\leq 
\int_{T_1} |\widehat{f}(t)| |\widehat{g}(s-t)| dt
+
\int_{T_2} |\widehat{f}(t)| |\widehat{g}(s-t)| dt, 
$$
where $T_1 = \cbr{t \in \RR^d: |s|/2 \leq |t|}$ and $T_2 = \cbr{t \in \RR^d: |s|/2 \leq |s-t|}$. 
Also observe that 
$|\widehat{f}(\xi)| = \eta_k^d |\widehat{\phi}(\eta_k \xi)|$ and 
$|\widehat{g}(\xi)| = \eta_k^d |\widehat{\psi}(\eta_k \xi)|$. 
To bound the integral over $T_2$, use that 
$|\widehat{g}(s-t)| \lesssim_{Z,\psi} \eta_k^d (1+\eta_k|s-t|)^{-Z} \leq \eta_k^d (1+\eta_k|s|/2)^{-Z}$ for $t \in T_2$ and that $\|\widehat{f}\|_{L^1}=\|\widehat{\phi}\|_{L^1}$. The integral over $T_1$ is treated similarly. 
\end{proof}

\begin{lemma}\label{Fk bound lemma}
Let $k \in \NN$.  Let $x_0 \in \RR
^d$. 
\begin{enumerate}[(a)]
\item 
If $x_0 \in \supp(\phi_0)$, then 
\begin{align*}
F_k(x_0) \leq c_k \eta_k^{-d}(1+P_k) \|\phi\|_{\infty}. 
\end{align*}
\item 
If $x_0 \in \supp(\phi_0)$ and $x_0 \notin \supp(\Phi_{J_k,\eta_k})$, 
then 
$$
F_k(x_0) \leq c_k \eta_k^{-d} \|\phi\|_{\infty}. 
$$
\end{enumerate}
\end{lemma}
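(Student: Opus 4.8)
The plan is to read the value $F_k(x_0)$ directly off the definition \eqref{Fk defn} together with the structure result in Lemma \ref{Fk nonzero terms lemma}. First I would recall that
\[
F_k(x_0) = c_k \left( P_k \Phi_{J_k,\eta_k}(x_0) + \sum_{I \in Q(M_k)} \Phi_{I,\eta_k}(x_0) \right),
\]
and that each $\Phi_{I,\eta_k}(x_0) = \sum_{r \in I^{-1}} \eta_k^{-d}\phi((x_0-r)/\eta_k)$ is a sum of $L^1$-normalized bumps on disjoint $\eta_k$-balls (disjointness for $I \in Q(M_k)$ uses Lemma \ref{separation lemma} via $\eta_k < C_B^{-1} N(I)^{-1/d}$, which holds once $(M_k)$ grows fast enough). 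Hence at the single point $x_0$ at most one term of each such sum is non-zero, and that term is at most $\eta_k^{-d}\|\phi\|_\infty$.

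For part (a): I would apply Lemma \ref{Fk nonzero terms lemma} with $G_k$ a smooth bump supported on $B(x_0,\eta_k)$ and equal to $1$ near $x_0$ (so that $\supp(G_k)$ has diameter $\le 2\eta_k$ and meets $\supp(\phi_0)$, since $x_0 \in \supp(\phi_0)$), evaluated at $x = x_0$. This shows there exist $I \in Q(M_k)$, $r' \in I^{-1}$, $r'' \in J_k^{-1}$ with
\[
F_k(x_0) = c_k \eta_k^{-d}\left( \phi\!\left(\tfrac{x_0-r'}{\eta_k}\right) + P_k\,\phi\!\left(\tfrac{x_0-r''}{\eta_k}\right)\right) \le c_k \eta_k^{-d}(1+P_k)\|\phi\|_\infty,
\]
using $\phi \ge 0$ and $\phi \le \|\phi\|_\infty$. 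Alternatively, and perhaps more cleanly, I would argue directly: the bound $\Phi_{I,\eta_k}(x_0) \le \eta_k^{-d}\|\phi\|_\infty$ for each fixed $I$ (by disjointness of supports) combined with the fact that $x_0$ lies in the support of at most one $\Phi_{I,\eta_k}$, $I \in Q(M_k)$, gives $\sum_{I\in Q(M_k)}\Phi_{I,\eta_k}(x_0) \le \eta_k^{-d}\|\phi\|_\infty$; together with $P_k\Phi_{J_k,\eta_k}(x_0) \le P_k\eta_k^{-d}\|\phi\|_\infty$ this yields the claim. The fact that $x_0$ meets at most one support among $I \in Q(M_k)$ is exactly the content of Lemma \ref{separation facts}(b),(c): if $x_0$ were within $\eta_k$ of $r_1 \in I_1^{-1}$ and $r_2 \in I_2^{-1}$ with $I_1 \ne I_2$, then either $r_1 \ne r_2$ and $|r_1-r_2|\gtrsim M_k^{-2} \gg 2\eta_k$ forces a contradiction, or $r_1 = r_2 \in \ZZ^d$, which is at distance $> 3\eta_k$ from $\supp(\phi_0) \ni x_0$, again a contradiction.

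For part (b): the hypothesis $x_0 \notin \supp(\Phi_{J_k,\eta_k})$ kills the $P_k$-term outright, so $F_k(x_0) = c_k \sum_{I\in Q(M_k)}\Phi_{I,\eta_k}(x_0) \le c_k \eta_k^{-d}\|\phi\|_\infty$ by the same single-term argument. I do not anticipate a genuine obstacle here — the lemma is essentially a bookkeeping consequence of the disjoint-support structure established in Lemmas \ref{separation lemma}, \ref{separation facts}, and \ref{Fk nonzero terms lemma}; the only point requiring minor care is invoking the smallness assumption $\eta_k \ll M_k^{-2}, M_k^{-1-\rho/d}$ on the rapidly growing sequence $(M_k)$ so that the various $\eta_k$-balls around distinct lattice points are genuinely disjoint and separated from $\supp(\phi_0)$.
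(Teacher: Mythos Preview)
Your proposal is correct and follows essentially the same route as the paper: apply Lemma \ref{Fk nonzero terms lemma} at the single point $x_0$ and read off the bound. The only cosmetic difference is that the paper takes $G_k = \one_{\{x_0\}}$ (which already has diameter $0 \le 2\eta_k$ and meets $\supp(\phi_0)$), whereas you use a smooth bump supported near $x_0$; evaluated at $x = x_0$ the two choices give the same identity, and your alternative ``direct'' argument simply unwinds the proof of Lemma \ref{Fk nonzero terms lemma} in place.
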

\begin{proof}
Take $G_k = \one_{\cbr{x_0}}$ in Lemma \ref{Fk nonzero terms lemma} and note that the support of $\Phi_{J_k,\eta_k}$ is the union of the closed balls $B(r,\eta_k)$ for $r \in J_k^{-1}$. 
\end{proof}

\begin{lemma}\label{k-ball regularity}
Let $k \in \NN$. Let $B$ be any ball of radius $\eta_k$. 
Let $2B$ be the ball with the same center but twice the radius. 
Then 
\begin{equation} \label{mu(B)-3}
\mu(B) \lesssim \log^2(M_k) 
M_k^{-2d+\rho^-}.
\end{equation}
Furthermore, if $2B$ does \textit{not} intersect $\supp(\Phi_{J_k,\eta_k})$, then 
\begin{equation} \label{mu(B)-1}
\mu(B) \lesssim \log^2(M_k) M_k^{-2d}
\end{equation}
and
\begin{equation}\label{mu(B)-2}
\mu(B) \lesssim \log^2(M_{k-1}) 
%M_{k-1}^{d(\tau-1)}(1+M_{k-1}^{-\rho})
M_{k-1}^{d(\tau-1)+\rho^-}
|Q(M_k)|^{-1} M_k^{-d}.
\end{equation}
\end{lemma}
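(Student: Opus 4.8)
The plan is to estimate $\mu(B)$ by comparing $\mu$ with one of the intermediate measures $\mu_{l,k}$ (via the weak convergence $\mu_k \to \mu$ and the near-stability estimate \eqref{mukleq2}), and then to exploit the product structure $\mu_{l,k} = F_{M_l}\cdots F_{M_{k+1}}\mu_{k-1}$ together with the single-scale decay estimate in Lemma \ref{Fphi_decay}. Concretely, I would first fix a Schwartz bump $\psi$ adapted to $B$ with $\one_B \le \psi_B \le \one_{2B}$, so that $\mu(B) \le \int \psi_B \, d\mu$. Using $\mu = \lim_k \mu_k$ and the trivial bound $\mu(B) \le \mu_{k-1}$-mass plus telescoping, the real point is to bound $\int \psi_B \, d\mu_{k-1}$ and, more importantly, $\int \psi_B \, d\mu_l = \int \psi_B F_{M_k}\, d\mu_{l,k}$ for large $l$. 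Expanding in Fourier series on the torus, $\int \psi_B F_{M_k}\, d\mu_{l,k} = \sum_{s} \widehat{\psi_B F_{M_k}}(s)\, \overline{\widehat{\mu_{l,k}}(s)}$ (with the appropriate $\mathbb{Z}^d$-periodization convention from Subsection \ref{notation}), and Lemma \ref{Fphi_decay} with a large value of $Z$ gives $|\widehat{\psi_B F_{M_k}}(s)| \lesssim c_k(1+P_k)(1+\eta_k|s|)^{-Z}$. This is exactly the hypothesis \eqref{H mu convolution 1} of Lemma \ref{H mu convolution lemma}.

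The second step is to split the sum over $s$ into the "low-frequency" part $|s| \lesssim M_k^{1+\tau}$ and the tail. For the tail, the fast decay of $\widehat{\psi_B F_{M_k}}$ together with $|\widehat{\mu_{l,k}}| \lesssim 1$ (Lemma \ref{mu_k_ell_lemma}) makes the contribution negligible. For the low-frequency part, I would write $\widehat{\mu_{l,k}}(s) = \widehat{\mu_{k-1}}(s) + (\widehat{\mu_{l,k}}(s) - \widehat{\mu_{k-1}}(s))$; the difference term is controlled by Lemma \ref{H mu convolution lemma}, giving a contribution $\lesssim c_k(1+P_k) M_k^{-80(1+\tau)d}$, which is tiny. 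So the genuine contribution is $\sum_s \widehat{\psi_B F_{M_k}}(s)\,\overline{\widehat{\mu_{k-1}}(s)} = \int \psi_B F_{M_k}\, d\mu_{k-1}$. Thus, up to errors that vanish as $l \to \infty$ and harmless $\log$ factors from the telescoping of \eqref{mukleq2}, we get $\mu(B) \lesssim \int \psi_B F_{M_k}\, d\mu_{k-1} + (\text{small})$.

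The third step is to bound $\int \psi_B F_{M_k}\, d\mu_{k-1} = \int \psi_B F_{M_k} \phi_0 F_{M_1}\cdots F_{M_{k-1}}\, dx$ pointwise on $2B$. Here I use Lemma \ref{Fk bound lemma}: on $\supp(\phi_0)$ we have $F_k \lesssim c_k \eta_k^{-d}(1+P_k)$, and if $2B$ misses $\supp(\Phi_{J_k,\eta_k})$ then in fact $F_k \lesssim c_k \eta_k^{-d}$ there. For the factors $F_{M_j}$ with $j \le k-1$, I would need an $L^\infty$ bound on $\mu_{k-1}$'s density on the ball $2B$ of radius $\eta_k$ — but crucially $\eta_k$ is minuscule compared to all $\eta_j$, $j<k$, so $\mu_{k-1}$ restricted to $2B$ is essentially $\mu_{k-1}(2B)$ spread over a ball; more precisely I bound $\int_{2B}\phi_0\prod_{j<k}F_{M_j}\,dx$ either by $\|\phi_0\prod_{j<k}F_{M_j}\|_\infty \cdot |2B|$, or — to get the sharper \eqref{mu(B)-2} — by $\mu_{k-1}(2B)$, and then apply the lemma inductively at scale $k-1$. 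Combining: $\mu(B) \lesssim c_k \eta_k^{-d}(1+P_k) \cdot \eta_k^d \cdot (\text{bound on }\mu_{k-1}(2B)) = c_k(1+P_k)\cdot(\text{bound on }\mu_{k-1}(2B))$; plugging in $c_k \sim \log(M_k)M_k^{-2d}$ from \eqref{ck size} and $P_k = M_k^{\rho^-}$ for odd $k$ (and $0$ for even $k$) yields $c_k(1+P_k) \lesssim \log(M_k)M_k^{-2d+\rho^-}$, which together with the trivial $\mu_{k-1}(2B) \lesssim 1$ gives \eqref{mu(B)-3}. When $2B$ avoids $\supp(\Phi_{J_k,\eta_k})$, the factor $(1+P_k)$ is replaced by $1$, giving $c_k \lesssim \log(M_k)M_k^{-2d}$ and hence \eqref{mu(B)-1}; and feeding in $\mu_{k-1}(2B) \lesssim \log^2(M_{k-1}) M_{k-1}^{d(\tau-1)+\rho^-}|Q(M_{k-1})|^{-1}M_{k-1}^{-d}\cdot(\cdots)$ — i.e. the previous-scale version of the estimate, rewritten using $\eta_{k-1}/\eta_k$ and \eqref{QMsize} — yields \eqref{mu(B)-2}. (A cleaner route to \eqref{mu(B)-2}: bound $F_{M_{k-1}}$ on $2B$ using that $2B$ lies in the support of $\Phi_{I,\eta_{k-1}}$ for at most one $I \in Q(M_{k-1})$, so $\mu_{k-1}$'s density on $2B$ is $\lesssim c_{k-1}\eta_{k-1}^{-d}(1+P_{k-1})$ times the density of $\mu_{k-2}$ there, and iterate; the $\eta_{k-1}^{-d}|2B| = (\eta_k/\eta_{k-1})^d = M_{k-1}^{d(1+\tau)}M_k^{-d(1+\tau)}$ is reorganized using $M_k \gg M_{k-1}$.)

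The main obstacle I anticipate is the bookkeeping in Step 3 to extract the \emph{precise} exponents in \eqref{mu(B)-2}: one must carefully track how the volume factor $|2B| \sim \eta_k^d$ interacts with the densities $c_j\eta_j^{-d}(1+P_j)$ of the earlier factors, use the separation lemma (Lemma \ref{separation lemma}, via Lemma \ref{separation facts}) to ensure that on the tiny ball $2B$ only \emph{one} translate $\phi((\cdot-r)/\eta_j)$ per ideal is active, and correctly convert between $M_{k-1}$-powers and $M_k^{-d}|Q(M_k)|^{-1}\sim \log(M_k)M_k^{-2d}$ using \eqref{QMsize} and \eqref{ck size} — all while the rapid growth of $(M_k)$ absorbs every error term. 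The Fourier-analytic Step 2 (invoking Lemmas \ref{Fphi_decay} and \ref{H mu convolution lemma}) is the conceptual heart but is essentially routine given those lemmas; the delicate part is purely the arithmetic of exponents in the pointwise bound.
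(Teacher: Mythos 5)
Your approach is essentially the paper's proof with the two steps in reverse order: the paper first establishes pointwise bounds on $\phi_0F_1\cdots F_k$ to estimate $\mu_k(2B)$ (via Lemma~\ref{Fk bound lemma} and \eqref{ck size}), and then passes from $\mu_k(2B)$ to $\mu(B)$ via a Schwartz bump $\psi_B$, Plancherel, Lemma~\ref{Fphi_decay}, and Lemma~\ref{H mu convolution lemma}. You run the Fourier step first and the pointwise step second, which is fine, and the Fourier step is correct.

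However, the bookkeeping in your Step~3 for \eqref{mu(B)-3} has a slip. You write $\mu(B)\lesssim c_k\eta_k^{-d}(1+P_k)\cdot\eta_k^d\cdot(\text{bound on }\mu_{k-1}(2B))$ and then feed in ``the trivial $\mu_{k-1}(2B)\lesssim 1$''. The $\eta_k^d$ factor can only come from $|2B|\sim\eta_k^d$, so what must fill the third slot is an $L^\infty$ bound on the density $\phi_0F_1\cdots F_{k-1}$ of $\mu_{k-1}$, not the total mass $\mu_{k-1}(2B)$. That density is $\lesssim\prod_{j<k}c_j\eta_j^{-d}(1+P_j)$, which, under rapid growth of $(M_k)$, is $\lesssim\log(M_k)$ but certainly not $\lesssim 1$ (each factor is $\sim\log(M_j)M_j^{d(\tau-1)+\rho^-}\gg 1$ since $\tau>1$); this is exactly where the second $\log$ in $\log^2(M_k)$ comes from. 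If instead one literally plugs $\mu_{k-1}(2B)\lesssim 1$ into the correct inequality $\mu(B)\lesssim\|F_k\|_{L^\infty(2B\cap\supp\phi_0)}\,\mu_{k-1}(2B)$, one only gets $\mu(B)\lesssim\log(M_k)M_k^{d(\tau-1)+\rho^-}$, which is useless. Relatedly, your suggestion to bound $\mu_{k-1}(2B)$ by ``applying the lemma inductively at scale $k-1$'' does not apply: the lemma at scale $k-1$ controls the limit measure $\mu$ on balls of radius $\eta_{k-1}$, whereas you need the intermediate measure $\mu_{k-1}$ on a ball of radius $2\eta_k\ll\eta_{k-1}$. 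The paper's (and your ``cleaner'') route --- a direct pointwise bound on $\phi_0F_1\cdots F_{k-1}$ via Lemma~\ref{Fk bound lemma}(a), absorbing the $j<k-1$ factors into $\log(M_{k-1})$ and keeping the $j=k-1$ and $j=k$ factors explicit --- is what produces the exact exponents in \eqref{mu(B)-2} and \eqref{mu(B)-3}.
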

\begin{proof}
Note \eqref{mu(B)-1} follows from \eqref{mu(B)-2}, if $M_k$ is large enough. So we only need to prove \eqref{mu(B)-3} and \eqref{mu(B)-2}. 

\textit{Step 1: Estimate for $\mu_k(2B)$.} 
We first prove \eqref{mu(B)-3} and \eqref{mu(B)-2} with $\mu$ replaced by $\mu_k$ and $B$ replaced by $2B$. 
Note 
\begin{align*}
\mu_k(2B) = \int_{2B} \phi_0 F_1 \cdots F_k dx. 
\end{align*}
Since the Lebesgue measure of $2B$ is $\lesssim M_k^{-(1+\tau)d}$, it will suffice to obtain appropriate pointwise bounds on the integrand.   
If $x \in \supp(\phi_0)$, then, by Lemma \ref{Fk bound lemma}(a), 
\begin{align*}
\phi_0 F_1 \cdots F_k(x) 
&\lesssim \log(M_k) c_k \eta_k^{-d}(1+P_k) \\
\nonumber 
&\lesssim \log^2(M_k) 
M_k^{d(\tau-1)+\rho^-}, 
\end{align*}
assuming $M_k$ is large enough. 
This proves \eqref{mu(B)-3} with $\mu$ replaced by $\mu_k$ and $B$ replaced by $2B$.  
Now suppose $2B$ does not intersect $\supp(\Phi_{J_k,\eta_k})$. 
For each $x \in \supp(\phi_0) \cap 2B$, we apply Lemma \ref{Fk bound lemma}(a) to $F_1(x), \ldots, F_{k-1}(x)$ and apply Lemma \ref{Fk bound lemma}(b) to $F_k(x)$ to get  
\begin{align*}
\phi_0 F_1 \cdots F_k(x) &\lesssim  \log(M_{k-1}) c_{k-1} \eta_{k-1}^{-d} (1+P_{k-1}) c_{k} \eta_{k}^{-d} \\
\nonumber 
&\lesssim \log^2(M_{k-1}) 
M_{k-1}^{d(\tau-1)+\rho^-}
|Q(M_k)|^{-1}M_k^{\tau d}, 
\end{align*}
assuming $M_{k-1}$ and $M_k$ are large enough. 
This proves \eqref{mu(B)-2} with $\mu$ replaced by $\mu_k$ and $B$ replaced by $2B$.

\textit{Step 2: Estimate for $\mu(B)$.} 
We assume $B$ intersects the support of $\phi_0$ (otherwise $\mu_l(B)=0$ and there is nothing to prove). 
Suppose $B = B(x_0,\eta_k)$. So $2B = B(x_0,2\eta_k)$. 
Let $\psi$ be a Schwartz function with support contained in $B(0,2)$, with $\psi \geq 0$ everywhere, and with $\psi \gtrsim 1$ on $B(0,1)$.
Let $\psi_{B}(x) = \psi((x-x_0)/\eta_k)$. 
For arbitrary $l \geq k$, we write
$$
\mu_l(\psi_B) : = \int \psi_B \mu_l dx.
$$
By Plancherel's theorem, we have
$$
\mu_l(\psi_B) - \mu_k (\psi_B)= \sum_{t \in \ZZ^d} \widehat{F_k\psi_B} (t) (\widehat{\mu_{l,k}} (t) - \widehat{\mu}_{k-1}(t)). 
$$
Note that hypotheses of Lemma \ref{Fphi_decay} are satisfied. 
By Lemma \ref{Fphi_decay} with $Z=10d$, the hypotheses of 
Lemma \ref{H mu convolution lemma} are satisfied with $C_k = c_k (1+P_k)$, $H(u) = \widehat{F_k \psi_B}(-u)$, and $s=0$. From Lemma \ref{H mu convolution lemma}, we obtain 
$$
\mu_l(\psi_B) - \mu_k(\psi_B) \lesssim c_k (1+P_k) M_k^{-80(1+\tau)d}.
$$
Since $\one_{B} \lesssim \psi_B \lesssim \one_{2B}$, we have $\mu_{l}(B) \lesssim \mu_{l}(\psi_B)$
and 
$\mu_k(\psi_B) \lesssim \mu_k(2B)$. 
Therefore 
$$
\mu_l(B) 
\lesssim \mu_k(2B) + c_k (1+P_k) M_k^{-80(1+\tau)d}.
$$
Letting $l \rightarrow \infty$, we have established 
\eqref{mu(B)-2} and \eqref{mu(B)-3}. 
\end{proof}

Now we prove the regularity of the measure $\mu$.

\begin{prop}\label{Prop_mu_reg}
For each ball $B$ in $\RR^d$ of radius $0 < r < 1$,
\begin{equation*}
\mu(B) \lesssim 
\log^2(r^{-1})r^{\frac{2d-\rho^-}{1+\tau}}.
\end{equation*}
\end{prop}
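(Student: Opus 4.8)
The plan is to reduce the general ball estimate to the scale-$\eta_k$ estimates already proved in Lemma \ref{k-ball regularity}, by choosing the right value of $k$ for a given radius $r$. Recall $\eta_k = M_k^{-(1+\tau)}$, so given $0 < r < 1$ I would pick $k$ to be the largest index with $\eta_k \geq r$, i.e.\ $\eta_{k+1} < r \leq \eta_k$ (for $r$ below $\eta_0$; the case $r$ comparable to or larger than $\eta_0$ is handled by the trivial bound $\mu(B) \leq \mu(\RR^d) \lesssim 1$ together with the fact that $r^{(2d-\rho^-)/(1+\tau)}$ is then bounded below). The key point is that a ball $B$ of radius $r$ with $r \leq \eta_k$ is contained in a ball of radius $\eta_k$, so $\mu(B) \lesssim \log^2(M_k) M_k^{-2d + \rho^-}$ by \eqref{mu(B)-3}. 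Now I need to convert the right-hand side into the desired form $\log^2(r^{-1}) r^{(2d-\rho^-)/(1+\tau)}$.

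The conversion step is the heart of the argument. From $r > \eta_{k+1} = M_{k+1}^{-(1+\tau)}$ one gets $M_{k+1} > r^{-1/(1+\tau)}$, and from $r \leq \eta_k = M_k^{-(1+\tau)}$ one gets $M_k \leq r^{-1/(1+\tau)}$, hence $M_k^{-2d+\rho^-} \geq r^{(2d-\rho^-)/(1+\tau)}$ --- which is the \emph{wrong} direction. The resolution is the rapid growth of the sequence $(M_k)$: since $M_{k+1}$ is much larger than any fixed power of $M_k$ (this is exactly the kind of growth condition the paper reserves the right to impose, cf.\ the remark before Lemma \ref{mu_k_lemma} and the hypotheses invoked throughout Sections \ref{sec_Single-Scale}--\ref{regularity_section}), we have, say, $M_{k+1} \geq M_k^{2}$, and therefore
\[
r^{-1/(1+\tau)} < M_{k+1} \quad\text{does not directly help, but}\quad r \leq \eta_k = M_k^{-(1+\tau)}
\]
combined with $r > \eta_{k+1} \geq M_k^{-2(1+\tau)}$ gives $M_k^{-2(1+\tau)} < r$, i.e.\ $M_k > r^{-1/(2(1+\tau))}$, hence $M_k^{-2d+\rho^-} < r^{(2d-\rho^-)/(2(1+\tau))}$. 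That still loses a factor of $2$ in the exponent. To avoid this loss one sharpens the growth assumption: for any $\epsilon > 0$ one may assume $M_{k+1} \geq M_k^{1/(1-\epsilon)}$ for all $k$, equivalently $\eta_{k+1} \geq \eta_k^{1/(1-\epsilon)}$ only after taking logs carefully; then $r > \eta_{k+1} \geq \eta_k^{1/(1-\epsilon)}$ forces $\eta_k \geq r^{1-\epsilon}$, i.e.\ $M_k^{-(1+\tau)} \geq r^{1-\epsilon}$, so $M_k^{-2d+\rho^-} \leq r^{(1-\epsilon)(2d-\rho^-)/(1+\tau)}$. Since the target exponent in the proposition is an inequality with an implied constant and no claim of sharpness in $\epsilon$, and since in fact the intended reading is presumably that $(M_k)$ grows fast enough that the exponent is recovered exactly in the limit, I would instead argue cleanly as follows: fix the growth so that $M_k^{1+\tau} \leq \eta_{k+1}^{-1} \leq M_k^{(1+\tau)(1+o(1))}$ is too weak; the correct clean statement is that one chooses $k$ with $\eta_k \geq r$, notes $M_k \leq r^{-1/(1+\tau)}$, which by itself gives $M_k^{-2d+\rho^-} \geq r^{(2d-\rho^-)/(1+\tau)}$, so this choice is useless, and one must instead pick $k$ with $\eta_k$ just \emph{below} $r$ --- but then $B$ need not be a single $\eta_k$-ball.

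So the honest plan is: choose $k$ minimal with $\eta_k < r$ (equivalently $\eta_{k-1} \geq r$), cover $B$ by $O((r/\eta_k)^d)$ balls of radius $\eta_k$, apply \eqref{mu(B)-3} to each, and sum:
\[
\mu(B) \lesssim \left(\frac{r}{\eta_k}\right)^d \log^2(M_k)\, M_k^{-2d+\rho^-} = r^d\, \log^2(M_k)\, M_k^{d(1+\tau)}\, M_k^{-2d+\rho^-} = r^d \log^2(M_k)\, M_k^{d\tau - d + \rho^-}.
\]
Hmm, this is worse. The right covering is instead to observe that since $\supp(\phi_0) \subseteq ([-3/8,-1/8]\cap[1/8,3/8])^d$ and the relevant mass sits near the lattices, a ball of radius $r$ with $\eta_k < r \leq \eta_{k-1}$ is contained in an $\eta_{k-1}$-ball, giving $\mu(B) \lesssim \log^2(M_{k-1}) M_{k-1}^{-2d+\rho^-}$ by \eqref{mu(B)-3} at level $k-1$; and now $r \leq \eta_{k-1} = M_{k-1}^{-(1+\tau)}$ gives $M_{k-1}^{-1} \leq r^{1/(1+\tau)}$, hence $M_{k-1}^{-2d+\rho^-} \leq r^{(2d-\rho^-)/(1+\tau)}$ directly, and $\log(M_{k-1}) = \frac{1}{1+\tau}\log(\eta_{k-1}^{-1}) \leq \frac{1}{1+\tau}\log(r^{-1})$ since $\eta_{k-1}^{-1} \leq r^{-1}$. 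Therefore
\[
\mu(B) \lesssim \log^2(M_{k-1})\, M_{k-1}^{-2d+\rho^-} \lesssim \log^2(r^{-1})\, r^{\frac{2d-\rho^-}{1+\tau}},
\]
which is exactly the claim. The only remaining detail is the boundary case where no such $k \geq 1$ exists, i.e.\ $r > \eta_0 = M_0^{-(1+\tau)}$; there $r^{-1} < M_0^{1+\tau}$ is bounded, so $\log^2(r^{-1}) r^{(2d-\rho^-)/(1+\tau)} \gtrsim M_0^{-(2d-\rho^-)} \gtrsim 1 \gtrsim \mu(B)$ since $\mu$ is finite. The main obstacle, and the thing to get right, is precisely this selection of the scale $k$: one must land one step \emph{above} $r$ (using $\eta_{k-1} \geq r$) rather than below, so that the inequality $M_{k-1}^{-(1+\tau)} \geq r$ points in the direction that turns the power of $M_{k-1}$ into the desired power of $r$; no rapid-growth hypothesis on $(M_k)$ is actually needed for this argument, only monotonicity.
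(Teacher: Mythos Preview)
Your final argument repeats exactly the sign error you caught yourself making earlier. From $r \leq \eta_{k-1} = M_{k-1}^{-(1+\tau)}$ one gets $r^{1/(1+\tau)} \leq M_{k-1}^{-1}$, i.e.\ $M_{k-1}^{-1} \geq r^{1/(1+\tau)}$, and hence $M_{k-1}^{-2d+\rho^-} \geq r^{(2d-\rho^-)/(1+\tau)}$, the \emph{wrong} direction. This is the same situation as your first attempt, just with $k-1$ in place of $k$; nothing was gained by the relabelling. More fundamentally, applying \eqref{mu(B)-3} at a single coarse scale can never work: the bound $\log^2(M_{k-1})M_{k-1}^{-2d+\rho^-}$ is a fixed number, while the target $\log^2(r^{-1})r^{(2d-\rho^-)/(1+\tau)}$ must track $r$ over the entire range $\eta_k \leq r \leq \eta_{k-1}$, and the sequence $(M_k)$ is chosen to grow so fast that this range is enormous. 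For $r$ near $\eta_k$ the target is of order $M_k^{-2d+\rho^-}$, far smaller than anything level $k-1$ can see.

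The paper's proof works at scale $k$ (where $\eta_k \leq r \leq \eta_{k-1}$) and covers $B \cap \supp(\mu)$ by $k$-balls, i.e.\ balls of radius $\eta_k$ centred at points of $I^{-1}$ for $I \in Q(M_k) \cup \{J_k\}$. The point is that one can \emph{count} how many such balls meet $B$, using Lemma \ref{separation lemma}: the centres coming from the $Q(M_k)$ ideals are $\gtrsim M_k^{-2}$ separated overall and $\gtrsim M_k^{-1}$ separated within a single $I^{-1}$, while the $J_k^{-1}$ centres are $\gtrsim M_k^{-1-\rho/d}$ separated. One then splits the $k$-balls into those whose doubles avoid $\supp(\Phi_{J_k,\eta_k})$ (where the sharper bounds \eqref{mu(B)-1} and \eqref{mu(B)-2} apply) and those near the $J_k$ lattice (where \eqref{mu(B)-3} applies), and runs a case analysis on the position of $r$ relative to $M_k^{-2}$, $M_k^{-1}$, and $M_k^{-1-\rho/d}$. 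The finer estimate \eqref{mu(B)-2}, which carries a factor $|Q(M_k)|^{-1}M_k^{-d}$ together with powers of $M_{k-1}$, is precisely what bridges the gap between scales $k$ and $k-1$ in the range $M_k^{-1} \leq r \leq \eta_{k-1}$; without it the counting argument would fail there. Your covering idea was on the right track, but the naive count $(r/\eta_k)^d$ ignores the lattice structure of $\supp(F_k)$ and the distinction among the three bounds in Lemma \ref{k-ball regularity}.
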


\begin{proof}
\textit{Preliminaries.} Since $\mu$ is a probability measure, the result holds for $r \geq M_0^{-(1+\tau)}$ by taking a large enough implied constant. 
Thus we assume that $M_k^{-(1+\tau)} \leq r \leq M_{k-1}^{-(1+\tau)}$ for some $k \geq 1$. 

We introduce some terminology. 
A $k$-ball is closed ball of radius $\eta_k$ with center $r \in I^{-1}$ for some $I \in Q(M_k) \cup \cbr{J_k}$. 
We consider two kinds of $k$-balls. 
Let $\mathcal{J}_2$ be the set of $k$-balls $J$ such that $2J$ intersects $\supp(\Phi_{J_k,\eta_k})$ and let $\mathcal{J}_1$ be the remaining $k$-balls. (Recall that $2J$ is the ball with the same center as $J$ but with twice the radius.) Note that if $k$ is even, then $\mathcal{J}_2$ is empty. 

The support $\mu$ is contained in the support of $F_k$, which is the union of all the $k$-balls. 
Hence, we have the decomposition
$$
\mu(B) \leq \sum_{J \in \mathcal{J}_1, J \cap B \neq \emptyset} \mu(J) +\sum_{J \in \mathcal{J}_2, J \cap B \neq \emptyset} \mu(J).
$$

\textit{Sum over $J \in \mathcal{J}_1.$} 
First we estimate the sum over $J \in \mathcal{J}_1$. 
Since each ball in $\mathcal{J}_1$ is centered at an element of $I^{-1}$ for some $I \in Q(M_k)$, we will use Lemma \ref{separation lemma} to bound the number of terms in the sum. 
We consider three cases. 

Case 1: $M_k^{-(1+\tau)} \leq r \leq M_k^{-2} $. 
By Lemma \ref{separation lemma}, the centers of balls in $\mathcal{J}_1$ are separated by $\gtrsim M_k^{-2}$. 
Thus the number of terms in the sum over $\mathcal{J}_1$ is $\lesssim 1$. By \eqref{mu(B)-1}, 
$$
\sum_{J \in \mathcal{J}_1, J \cap B \neq \emptyset} \mu(J) 
\lesssim \log^2(M_k) M_k^{-2d} \lesssim \log^2(r^{-1}) r^{\frac{2d}{1+\tau}}.
$$

Case 2: $M_k^{-2} \leq r \leq M_k^{-1} $. Again, by Lemma \ref{separation lemma}, the centers of balls in $\mathcal{J}_1$ are separated by $\gtrsim M_k^{-2}$. Now the number of terms in the sum is $\lesssim (M_k^2r)^d$. By \eqref{mu(B)-1}, 
$$
\sum_{J \in \mathcal{J}_1, J \cap B \neq \emptyset} \mu(J) 
\lesssim \log^2(M_k) r^d \lesssim \log^2(r^{-1}) r^d \leq \log^2(r^{-1}) r^{\frac{2d}{1+\tau}}. 
$$

Case 3: $M_k^{-1} \leq r \leq M_{k-1}^{-(1+\tau)}$. Fix an ideal $I \in Q(M_k)$. By Lemma \ref{separation lemma}, the elements in $I^{-1}$ are at least $M_k^{-1}$ separated. 
So the number of $k$-balls in $\mathcal{J}_1$ that intersect $B$ and whose centers belong to $I^{-1}$ 
is $\lesssim (M_k r)^d$. 
Therefore the total number of $k$-balls in $\mathcal{J}_1$ that intersect $B$ is $\lesssim |Q(M_k)| (M_k r)^d$. 
Using  \eqref{mu(B)-2} and that $r \leq M_{k-1}^{-(1+\tau)}$, we get 
\begin{align*}
\sum_{J \in \mathcal{J}_1, J \cap B \neq \emptyset} \mu(J) \lesssim \log^2(M_{k-1}) 
M_{k-1}^{(\tau -1)d+\rho^-} 
r^d
 \lesssim \log^2 (r^{-1}) 
 r^{\frac{2d-\rho^-}{1+\tau}}. 
\end{align*}

\textit{Sum over $J \in \mathcal{J}_2.$} 
Now we consider the sum over $J \in \mathcal{J}_2$. 
Let $\mathcal{J}'_2$ be the subcollection of $\mathcal{J}_2$ consisting of the balls whose center is an element of $J_{k}^{-1}$. 
Let $\mathcal{J}''_2$ be the remaining balls in $\mathcal{J}_2$. 
By Lemma \ref{separation facts}(b), each ball in $\mathcal{J}'_2$ intersects at most one ball in $\mathcal{J}''_2$. 
Thus the cardinality of $\mathcal{J}_2$ is at most twice that of $\mathcal{J}'_2$. 
Moreover, by Lemma \ref{separation facts}(a), 
the balls in $\mathcal{J}'_2$ are separated by $\gtrsim M_k^{-1-\frac{\rho}{d}}$. 
Thus the number of terms in the sum over $\mathcal{J}_2$ is $ \lesssim \max\cbr{ 1, M_k^{d+\rho} r^d}$.

By \eqref{mu(B)-3}, 
\begin{equation}\label{meas_J_2}
\sum_{J \in \mathcal{J}_2, J \cap B \neq \emptyset} \mu(J) \lesssim \log^2(M_k)
%M_k^{-2d} (1+M_k^{-\rho}) 
M_k^{-2d+\rho^-}
\cdot 
\max\cbr{1, M_k^{d+\rho} r^d}.
\end{equation}
If the maximum 
in \eqref{meas_J_2} 
is $1$, then $M_k^{-(1+\tau)} \leq r \leq M_k^{-1-\frac{\rho}{d}}$, and so 
$$
\sum_{J \in \mathcal{J}_2, J \cap B \neq \emptyset} \mu(J) \lesssim \log^2(r^{-1}) 
r^{\frac{2d+\rho^-}{1+\tau}}.
$$
If the maximum 
in \eqref{meas_J_2} 
is $M_k^{d +\rho} r^d$, 
then 
$$
\sum_{J \in \mathcal{J}_2, J \cap B \neq \emptyset} \mu(J) 
\lesssim 
\log^2(M_k) 
M_k^{-d+\rho^+}
r^d. 
$$
Since the function  
$\log^2(x)x^{-d+\rho^+}$ 
is eventually decreasing and 
since $M_k^{-(1+\tau)} \leq r \leq M_{k-1}^{-(1+\tau)}$, 
the last expression is 
\begin{align*}
\lesssim 
\log^2(r^{-1}) 
r^{\frac{d-\rho^+}{1+\tau}} 
r^d 
\leq 
\log^2(r^{-1})  
r^{\frac{2d-\rho^-}{1+\tau}}. 
\end{align*}
\end{proof}

\section{Failure of the Extension Estimate}\label{sec_Failure_Estimate}

In this section, we show that $\mu$ satisfies (iv) of Theorem \ref{mainthm3} with the functions $f_k = \Phi_{J_k, \eta_k}$. 
Explicitly, our goal is to prove the following proposition. 

\begin{prop}\label{res fail prop}
For all $1 \leq q < \infty$ and all $p < p(\tau, \rho, q, d) := \frac{q(d \tau - \rho)}{(q-1) (d-\rho^+)}$, 
$$
\lim_{\substack{k \to \infty \\ k \text{ odd}}} \dfrac{\|\widehat{\Phi_{J_k,\eta_k} \mu}\|_{L^p}}{\|\Phi_{J_k,\eta_k}\|_{L^q(\mu)}} = \infty.
$$
\end{prop}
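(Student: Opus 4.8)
The plan is to estimate the numerator from below and the denominator from above for the test functions $f_k=\Phi_{J_k,\eta_k}$ with $k$ odd, so that the ratio blows up precisely when $p<p(\tau,\rho,q,d)$. First I would compute the denominator. Since $k$ is odd, $P_k = M_k^{\rho^-}$ and $J_k$ is a prime ideal with $N(J_k)\sim M_k^{d+\rho}$, and by Lemma \ref{separation facts}(a) the translates $\phi((\cdot-r)/\eta_k)$ over $r\in J_k^{-1}$ have disjoint supports (as $\eta_k=M_k^{-(1+\tau)}$ is far smaller than the separation $M_k^{-1-\rho/d}$). Thus $\Phi_{J_k,\eta_k}^q = \sum_{r\in J_k^{-1}}\eta_k^{-dq}\phi^q((\cdot-r)/\eta_k)$, and $\|\Phi_{J_k,\eta_k}\|_{L^q(\mu)}^q = \int \Phi_{J_k,\eta_k}^q\,d\mu$ is controlled by summing $\mu$ over the $k$-balls meeting $\mathrm{supp}(\Phi_{J_k,\eta_k})$ (these are exactly the balls in the family $\mathcal J_2'$ from the proof of Proposition \ref{Prop_mu_reg}), each weighted by $\eta_k^{-dq}\|\phi\|_\infty^q$ and measure $\lesssim \log^2(M_k) M_k^{-2d+\rho^-}$ from \eqref{mu(B)-3}. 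Counting the $\lesssim M_k^{d+\rho}$ such balls in $[-\tfrac12,\tfrac12]^d$ yields an upper bound of the shape $\|\Phi_{J_k,\eta_k}\|_{L^q(\mu)}\lesssim (\log M_k)^{2/q}\,\eta_k^{-d}\,M_k^{(\rho - d)/q}$, up to harmless powers; one should also check a matching lower bound is not needed — only an upper bound on the denominator is required.

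Next I would bound the numerator from below. The key is \textbf{constructive interference}: write $\widehat{\Phi_{J_k,\eta_k}\mu}(\xi) = \widehat{\Phi_{J_k,\eta_k}}*\widehat\mu\,(\xi)$, or better, exploit that $\mu$ is (essentially) $\mu_{k-1}$ at scales below $M_k$ so that $\Phi_{J_k,\eta_k}\,d\mu \approx \Phi_{J_k,\eta_k}\,\phi_0 F_1\cdots F_{k-1}\,dx$ up to a rapidly decaying error (this is the role of Lemma \ref{H mu convolution lemma} / the $\mu_{l,k}$ machinery). The Fourier transform of $\Phi_{J_k,\eta_k}$ is, by Lemma \ref{Phi-Fourier-Lemma}, supported (as a distribution on $\ZZ^d$, after periodization) on the lattice $T(\delta^{-1}J_k)$ with value $\widehat\phi(\eta_k s) N(J_k)\mathbf 1_{\delta^{-1}J_k}(T^{-1}s)$, so it is $\sim N(J_k)\sim M_k^{d+\rho}$ on a dual box of frequencies $|s|\lesssim \eta_k^{-1}=M_k^{1+\tau}$ lying in that lattice. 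Multiplying by the slowly varying factor $\phi_0 F_1\cdots F_{k-1}$ (whose mass is $\sim 1$) smears this only mildly, so one gets $|\widehat{\Phi_{J_k,\eta_k}\mu}(\xi)|\gtrsim M_k^{d+\rho}$ for $\xi$ in a set of measure $\gtrsim (\#\text{of lattice points})\cdot(\text{dual cube volume})$. Here the lattice $T(\delta^{-1}J_k)$ has covolume $\sim N(J_k)^{-1}\sim M_k^{-(d+\rho)}$, so in a ball of radius $\eta_k^{-1}=M_k^{1+\tau}$ it has $\sim M_k^{(1+\tau)d}M_k^{d+\rho}$ points, but since $\widehat\phi(\eta_k s)$ only decays for $|s|\gtrsim\eta_k^{-1}$, one should instead locate $\gtrsim M_k^{(1+\tau)d - (d+\rho)}$ frequency cubes of side $\sim \eta_k^{-1}$ each contributing $\gtrsim M_k^{d+\rho}$ on a set of measure $\gtrsim 1$ — I would set this up carefully to get $\|\widehat{\Phi_{J_k,\eta_k}\mu}\|_{L^p}^p \gtrsim M_k^{(d+\rho)p}\cdot(\text{volume of the good frequency set})$, taking the volume of the good set to be $\gtrsim \eta_k^{-d}=M_k^{(1+\tau)d}$ (a single dual cube suffices for a lower bound).

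Finally I would combine the two estimates, take logarithms in $M_k$, and read off the exponent. The ratio behaves like $M_k^{\alpha}$ for an explicit $\alpha=\alpha(\tau,\rho,q,d,p)$ that is linear in $1/p$; setting $\alpha>0$ should reduce exactly to $p<\frac{q(d\tau-\rho)}{(q-1)(d-\rho^+)}$, after using $\eta_k=M_k^{-(1+\tau)}$, $N(J_k)\sim M_k^{d+\rho}$, $P_k=M_k^{\rho^-}$, and $d-\rho^+ = d$ when $\rho\le 0$ versus $d-\rho$ when $\rho\ge 0$ (the $\rho^+$ vs $\rho^-$ bookkeeping is where the case split $\rho\gtrless 0$ enters). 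The main obstacle I anticipate is making the constructive-interference lower bound on $\|\widehat{\Phi_{J_k,\eta_k}\mu}\|_{L^p}$ rigorous: one must control the effect of multiplying $\Phi_{J_k,\eta_k}$ by the oscillating factor $F_1\cdots F_{k-1}$ (equivalently, convolving $\widehat{\Phi_{J_k,\eta_k}}$ with $\widehat{\mu_{k-1}}$) without destroying the coherent $\gtrsim N(J_k)$ lower bound on a frequency set of full dual-cube volume, and to argue that the replacement of $\mu$ by $\mu_{k-1}$ (or the relevant $\mu_{l,k}$) costs only a negligible error — this is precisely the kind of estimate Lemma \ref{H mu convolution lemma} is built to supply, but applying it at the frequency $s$ ranging over the good set rather than $s=0$ requires the full strength of its hypothesis $|s|\le M_k^{1+\tau}$, which matches the dual-cube scale exactly, so the bookkeeping must be done with care.
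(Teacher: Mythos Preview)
There is a genuine gap in the numerator lower bound, and it propagates to give the wrong threshold for $p$.

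Your approximation $\Phi_{J_k,\eta_k}\,d\mu \approx \Phi_{J_k,\eta_k}\,\phi_0 F_1\cdots F_{k-1}\,dx$ drops the factor $F_k$. The $\mu_{l,k}$ machinery does not say $\mu_l\approx\mu_{k-1}$; it says $\mu_{l,k}\approx\mu_{k-1}$, and since $\mu_l=F_k\,\mu_{l,k}$ one only gets $\Phi_{J_k,\eta_k}\,\mu_l\approx(\Phi_{J_k,\eta_k}F_k)\,\mu_{k-1}$. That missing $F_k$ is exactly where the constructive interference lives: $F_k$ contains the term $c_kP_k\Phi_{J_k,\eta_k}$, so $\Phi_{J_k,\eta_k}F_k$ contains $h_1=c_kP_k\Phi_{J_k,\eta_k}^2$, and Lemma~\ref{Phi-Fourier-Lemma} (the \emph{square} version \eqref{Phi-Fourier-eqn-2}) gives, at lattice frequencies $T^{-1}s\in\delta^{-1}J_k$ with $|s|\le cM_k^{1+\tau}$,
\[
|\widehat{h_1}(s)| \;\sim\; c_kP_k\,\eta_k^{-d}N(J_k)\;\sim\;\log(M_k)\,M_k^{-2d+\rho^-+d(1+\tau)+d+\rho}\;=\;\log(M_k)\,M_k^{d\tau+\rho^+}.
\]
Your proposed pointwise lower bound is only $N(J_k)\sim M_k^{d+\rho}$, which is smaller by a factor $\sim\log(M_k)\,M_k^{d(\tau-1)+\rho^-}$. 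If one runs the final arithmetic with $M_k^{d+\rho}$ in place of $M_k^{d\tau+\rho^+}$, using the (correct) count $\gtrsim M_k^{d\tau-\rho}$ of good lattice frequencies and the (correct) denominator bound, the ratio grows like $M_k$ to the power
\[
-(d\tau-\rho)\Bigl(1-\tfrac1p\Bigr)+\tfrac{d-\rho^+}{q},
\]
which is positive only for $p<\dfrac{q(d\tau-\rho)}{q(d\tau-\rho)-(d-\rho^+)}$, a strictly smaller range than the claimed $p<\dfrac{q(d\tau-\rho)}{(q-1)(d-\rho^+)}$ (since $d\tau-\rho>d-\rho^+$). So the plan as written does not reach the proposition.

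Two secondary issues. First, once the $F_k$ factor is restored, one must also show that the cross term $h_2=c_k\Phi_{J_k,\eta_k}\sum_{I\in Q(M_k)}\Phi_{I,\eta_k}$ does not swamp $h_1$; in the paper this is Lemma~\ref{h_2_est}, and it genuinely requires the Chinese remainder lemma (Lemma~\ref{concrete crt lemma}) to control how $\delta^{-1}I$ and $\delta^{-1}J_k$ intersect --- your outline does not address this. Second, your ``single dual cube of volume $\eta_k^{-d}$'' cannot serve as the good frequency set: the pointwise lower bound holds only at the $\gtrsim M_k^{d\tau-\rho}$ lattice frequencies, and one needs a Lipschitz bound (the paper's Lemma~\ref{Phierrorlem}) to pass to small neighborhoods of each before integrating. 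The denominator argument, by contrast, is essentially the paper's Lemma~\ref{PhiLqnorm} and is fine.
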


Our proof of this proposition requires establishing several intermediate results.  

\subsection{Pointwise Lower Bound}\label{Pointwise Lower Bound}

Fix a constant $0 < c < 1$ such that $\widehat{\phi^2}(\xi) \geq c$ for all $|\xi| \leq c$. (Such a constant exists because $\widehat{\phi^2}(0) = \int \phi^2(x) dx > 0$ and $\widehat{\phi^2}$ is continuous.)  
Fix an odd positive integer $k$. 
We will prove:  

\begin{lemma}\label{lem_lb_fkmu}
If $T^{-1} s \in \delta^{-1} J_k$ and $|s| \leq c M_k^{1 + \tau}$, then 
\begin{equation}\label{eq_lb_fkmu}
\left|\widehat{\Phi_{J_k, \eta_k} \mu}(s)\right| \gtrsim \log(M_k) 
M_k^{d \tau+\rho^+}.
\end{equation}
\end{lemma}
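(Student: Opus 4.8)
The plan is to compute $\widehat{\Phi_{J_k,\eta_k}\mu}(s)$ by writing it as a sum over the scales below $k$ and isolating the main term. Since $\mu$ is the weak limit of $\mu_l = \phi_0 F_{M_1}\cdots F_{M_l}$ and $k$ is fixed, I would first show that $\widehat{\Phi_{J_k,\eta_k}\mu}(s)$ can be computed by passing to the limit in $\widehat{\Phi_{J_k,\eta_k}\mu_l}(s)$; this is legitimate because $\Phi_{J_k,\eta_k}$ is smooth and bounded on $\supp(\phi_0)$ (using $\eta_k < C_B^{-1}N(J_k)^{-1/d}$ so the bumps are disjoint), so $\Phi_{J_k,\eta_k}\,d\mu_l \to \Phi_{J_k,\eta_k}\,d\mu$ weakly. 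Then by Plancherel on the torus (as in Step 2 of Lemma \ref{k-ball regularity}), writing $\Phi_{J_k,\eta_k}^2 \cdot (\text{rest})$, I would express $\widehat{\Phi_{J_k,\eta_k}\mu_l}(s)$ as a convolution sum $\sum_{t\in\ZZ^d}\widehat{\Phi_{J_k,\eta_k}^2 F_{M_k}^{-1}\cdots}$ — more precisely, the cleanest route is to note that $\Phi_{J_k,\eta_k}\,d\mu = \Phi_{J_k,\eta_k}\,\phi_0\,\prod_{j\le k}F_{M_j}\,dx$ "in the limit'', and $\Phi_{J_k,\eta_k} F_{M_k}$ contains the term $c_k P_k \Phi_{J_k,\eta_k}^2$ (with $P_k = M_k^{\rho^-}$ since $k$ is odd), plus cross terms $c_k\Phi_{J_k,\eta_k}\Phi_{I,\eta_k}$ which, by the separation Lemma \ref{separation lemma} and the smallness of $\eta_k$, have controlled overlap.

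The heart of the computation is the main term: $\widehat{(c_k P_k \Phi_{J_k,\eta_k}^2 \,\mu_{k,k})}(s)$, where $\mu_{k,k} = \mu_{k-1}$. Using \eqref{Phi-Fourier-eqn-2}, $\widehat{\Phi_{J_k,\eta_k}^2}(u) = \eta_k^{-d}\widehat{\phi^2}(\eta_k u)N(J_k)\mathbf{1}_{\delta^{-1}J_k}(T^{-1}u)$, so the convolution $\widehat{\Phi_{J_k,\eta_k}^2} * \widehat{\mu_{k-1}}$ evaluated at $s$ is $\sum_{t: T^{-1}(s-t)\in\delta^{-1}J_k} \eta_k^{-d}\widehat{\phi^2}(\eta_k(s-t))N(J_k)\widehat{\mu_{k-1}}(t)$. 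The dominant contribution is $t=0$ (recall $\widehat{\mu_{k-1}}(0)\approx 1$, and $\widehat{\phi^2}(\eta_k s)\ge c$ because $|s|\le cM_k^{1+\tau} = c\eta_k^{-1}$, while the hypothesis $T^{-1}s\in\delta^{-1}J_k$ makes the $t=0$ term admissible), giving roughly $c_k P_k \eta_k^{-d} N(J_k) \cdot c$. Plugging in $c_k \sim \log(M_k)M_k^{-2d}$ from \eqref{ck size}, $P_k = M_k^{\rho^-}$, $\eta_k^{-d} = M_k^{(1+\tau)d}$, and $N(J_k)\sim M_k^{d+\rho}$, the product is $\sim \log(M_k) M_k^{-2d+\rho^- + (1+\tau)d + d+\rho} = \log(M_k)M_k^{d\tau + \rho + \rho^-} = \log(M_k)M_k^{d\tau+\rho^+}$, which is exactly \eqref{eq_lb_fkmu}.

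The remaining work is to show all other contributions are genuinely lower order. There are two sources of error: (a) the terms $t\ne 0$ in the convolution sum for the main term, which I would bound using the decay estimates \eqref{mu-3'}, \eqref{mu-4'} on $\widehat{\mu_{k-1}}$ together with the Schwartz decay of $\widehat{\phi^2}(\eta_k(s-t))$ — the lattice constraint $T^{-1}(s-t)\in\delta^{-1}J_k$ forces $|s-t|\gtrsim N(J_k)^{1/d}\sim M_k^{1+\rho/d}$ for $t\ne s$-type terms, which combined with $|s|\le cM_k^{1+\tau}$ makes these negligible; and (b) the cross terms $\Phi_{J_k,\eta_k}\Phi_{I,\eta_k}$ for $I\in Q(M_k)$, together with the tail $\Phi_{J_k,\eta_k}\cdot(\mu_{l,k}-\mu_{k-1})$, which I would control exactly as in Lemma \ref{H mu convolution lemma} (with $C_k = c_k(1+P_k)$ and $H_k$ the Fourier transform of $\Phi_{J_k,\eta_k}$ times the relevant bump, using Lemma \ref{Fphi_decay}-style estimates), yielding a bound like $c_k(1+P_k)M_k^{-80(1+\tau)d}$, which is vastly smaller than $\log(M_k)M_k^{d\tau+\rho^+}$. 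I expect step (a) — carefully verifying that the lattice separation $T^{-1}u\in\delta^{-1}J_k$ really does push every $t\ne 0$ term below the main term, uniformly in the range $|s|\le cM_k^{1+\tau}$ — to be the main obstacle, since it is where the specific geometry of the ideal $J_k$ and the different ideal $\delta$ genuinely enters; everything else is bookkeeping with the estimates already established for $\mu_k$ and $\mu_{l,k}$.
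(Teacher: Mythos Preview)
Your decomposition matches the paper's: write $\Phi_{J_k,\eta_k}F_k = h_1 + h_2$ with $h_1 = c_kP_k\Phi_{J_k,\eta_k}^2$ and $h_2 = c_k\Phi_{J_k,\eta_k}\sum_{I\in Q(M_k)}\Phi_{I,\eta_k}$, isolate the $t=0$ term of $\widehat{h_1}*\widehat{\mu_{l,k}}$, and bound the rest. Your computation of the main term and your treatment of the $t\neq 0$ terms for $h_1$ (your step (a)) are correct and agree with the paper.

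The genuine gap is in your step (b). You lump the cross terms $h_2$ together with the tail $\mu_{l,k}-\mu_{k-1}$ and assert both are bounded by $c_k(1+P_k)M_k^{-80(1+\tau)d}$ via Lemma \ref{H mu convolution lemma}. But that lemma only controls $\sum_t|H_k(s-t)||\widehat{\mu_{l,k}}(t)-\widehat{\mu_{k-1}}(t)|$; it says nothing about $\sum_t|\widehat{h_2}(s-t)||\widehat{\mu_{k-1}}(t)|$. And this last sum is \emph{not} tiny: the paper shows (Lemma \ref{h_2_est} and \eqref{h_2_ub}) that it is $\lesssim \log^{1/10}(M_k)\,M_k^{d\tau}(1+M_k^{\rho-d(\tau-1)})$. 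When $\rho\le 0$ this is $\log^{1/10}(M_k)M_k^{d\tau}$, which beats the main term $\log(M_k)M_k^{d\tau}$ only by the factor $\log^{9/10}(M_k)$. So the cross terms are not ``vastly smaller''; they are nearly the same size, and a crude overlap argument (``controlled overlap by Lemma \ref{separation lemma}'') will not suffice.

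Obtaining the needed bound on $\widehat{h_2}$ is the real work in this lemma. One must compute $\widehat{h_2}(s)=\sum_I\sum_t c_kN(I)N(J_k)\widehat{\phi}(\eta_kt)\widehat{\phi}(\eta_k(s-t))$ with $t$ ranging over $\mathcal{T}(I)=\{t: T^{-1}t\in\delta^{-1}I\cap(T^{-1}s+\delta^{-1}J_k)\}$, and then use the Chinese remainder theorem (Lemma \ref{concrete crt lemma}) to see that $\mathcal{T}(I)$ is either empty or a translate of a lattice with spacing $\gtrsim M_k^{2+\rho/d}$. This count, together with the cancellation of $\log(M_k)$ between $c_k$ and $|Q(M_k)|$, is what produces the crucial sub-logarithmic bound on $h_2$. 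You have also misidentified the main obstacle: your step (a) is routine once the lattice constraint $T^{-1}t\in\delta^{-1}J_k$ forces $|t|\gtrsim M_k^{1+\rho/d}\gg M_{k-1}^{1+\tau}$; it is the $h_2$ estimate that requires a new idea.
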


The rest of the subsection is devoted to the proof of this lemma. It suffices to show that 
\begin{equation}\label{f_kmu_l_lb}
|\widehat{\Phi_{J_k, \eta_k} \mu_l} (s)| \gtrsim \log(M_k) 
%M_k^{d \tau }(1+M_k^\rho) 
M_k^{d \tau+\rho^+} 
\end{equation}
for $l \geq k$. 
Fix $l, k \in \NN$ with $l \geq k$. 
Write $\Phi_{J_k, \eta_k}F_k = h_1+h_2$ where
$$
h_1(x) = c_k P_k (\Phi_{J_k, \eta_k}(x))^2
$$
and
$$
h_2(x) = c_k \Phi_{J_k, \eta_k}(x) \sum_{I \in Q(M_k)} \Phi_{I, \eta_k}(x).
$$
Recall that 
$\mu_{l,k} = F_l F_{l-1} \cdots F_{k+1} \mu_{k-1}$. 
Therefore  $\Phi_{J_k, \eta_k} \mu_l  = h_1 \mu_{l,k} + h_2 \mu_{l,k}$, and hence 
\begin{align}\label{phi mu decomp}
\widehat{\Phi_{J_k, \eta_k} \mu_l}(s)
=
\widehat{h_1}(s) \widehat{\mu_{l,k}}(0)
+
\sum_{\substack{t \in \ZZ^d \\  t \neq 0}} \widehat{h_1}(s-t) \widehat{\mu_{l,k}}(t)
+
\sum_{t \in \ZZ^d} \widehat{h_2}(s-t) \widehat{\mu_{l,k}}(t). 
\end{align}

\begin{lemma}
If $T^{-1} s \in \delta^{-1} J$ and $|s| \leq c M_k^{1+\tau}$, then 
\begin{equation}\label{h_1_lb}
|\widehat{h_1} (s) \cdot  \widehat{\mu_{l,k}} (0) | \gtrsim \log(M_k) 
M_k^{d \tau+\rho^+}.
\end{equation}
\end{lemma}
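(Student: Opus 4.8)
The plan is to compute $\widehat{h_1}(s)$ essentially in closed form using the square formula from Lemma \ref{Phi-Fourier-Lemma}, and then separately note that the factor $\widehat{\mu_{l,k}}(0)$ is bounded below by an absolute constant.

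First I would unwind the definition: since $h_1 = c_k P_k \Phi_{J_k,\eta_k}^2$, we have $\widehat{h_1}(s) = c_k P_k \widehat{\Phi_{J_k,\eta_k}^2}(s)$. To invoke \eqref{Phi-Fourier-eqn-2} of Lemma \ref{Phi-Fourier-Lemma} with $I = J_k$ and $\eta = \eta_k$, one must check the threshold $\eta_k < C_B^{-1} N(J_k)^{-1/d}$. Using $N(J_k) \leq M_k^{d+\rho}$ and $\eta_k = M_k^{-(1+\tau)}$, this reduces to $M_k^{\rho/d - \tau} < C_B^{-1}$, which holds once $M_k$ is large enough because $\rho < d < d\tau$. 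Thus
\[
\widehat{\Phi_{J_k,\eta_k}^2}(s) = \eta_k^{-d}\,\widehat{\phi^2}(\eta_k s)\,N(J_k)\,\mathbf{1}_{\delta^{-1}J_k}(T^{-1}s).
\]
Now I plug in the hypotheses. Since $T^{-1}s \in \delta^{-1}J_k$, the indicator equals $1$. Since $|s| \leq c M_k^{1+\tau}$ we get $|\eta_k s| \leq c$, so by the choice of $c$ (namely $\widehat{\phi^2}(\xi) \geq c$ for $|\xi| \leq c$) we have $\widehat{\phi^2}(\eta_k s) \geq c$. Combining this with $\eta_k^{-d} = M_k^{(1+\tau)d}$ and $N(J_k) \geq M_k^{d+\rho}/2$ gives $\widehat{\Phi_{J_k,\eta_k}^2}(s) \gtrsim M_k^{(2+\tau)d + \rho}$. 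Using $c_k \sim \log(M_k) M_k^{-2d}$ from \eqref{ck size}, $P_k = M_k^{\rho^-}$ (valid since $k$ is odd), and the identity $\rho + \rho^- = \rho^+$, this yields
\[
\widehat{h_1}(s) = c_k P_k \widehat{\Phi_{J_k,\eta_k}^2}(s) \gtrsim \log(M_k)\, M_k^{-2d} \cdot M_k^{\rho^-} \cdot M_k^{(2+\tau)d + \rho} = \log(M_k)\, M_k^{d\tau + \rho^+}.
\]

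Finally, I would show $\widehat{\mu_{l,k}}(0) \gtrsim 1$ uniformly in $l$. Since $\mu_{l,k}$ has non-negative density, $\widehat{\mu_{l,k}}(0) = \mu_{l,k}(\RR^d)$. Writing the telescoping identity $\widehat{\mu_{l,k}}(0) = \widehat{\mu_{k-1}}(0) + \sum_{j=k+1}^{l}\bigl(\widehat{\mu_{j,k}}(0) - \widehat{\mu_{j-1,k}}(0)\bigr)$ (recall $\mu_{k,k} = \mu_{k-1}$) and bounding each difference by $\log(M_j)M_j^{-100}$ via \eqref{mukleq2}, together with $\widehat{\mu_{k-1}}(0) \geq 1/2$ (established in the proof of Lemma \ref{weakconv}), we get $\widehat{\mu_{l,k}}(0) \geq 1/4$ provided the sequence $(M_j)$ grows fast enough. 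Multiplying the two lower bounds establishes \eqref{h_1_lb}. There is no substantial obstacle here: the only points requiring care are verifying the applicability threshold for \eqref{Phi-Fourier-eqn-2} and checking that the lower bound on $\widehat{\mu_{l,k}}(0)$ is uniform in $l$.
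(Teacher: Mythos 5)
Your proof is correct and follows essentially the same route as the paper: apply \eqref{Phi-Fourier-eqn-2} to compute $\widehat{h_1}(s)$ in closed form, use the hypotheses on $s$ to control the indicator and $\widehat{\phi^2}(\eta_k s)$, combine with \eqref{ck size} and the definition of $P_k$, and iterate \eqref{mukleq2} to get $\widehat{\mu_{l,k}}(0)\gtrsim 1$. You are somewhat more explicit than the paper in verifying the applicability threshold for \eqref{Phi-Fourier-eqn-2} and in tracing the lower bound on $\widehat{\mu_{l,k}}(0)$ back through $\widehat{\mu_{k-1}}(0)$, but the argument is the same.
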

\begin{proof}
By \eqref{Phi-Fourier-eqn-2},  
\eqref{ck size}, and the definition of $c$, 
\begin{align*}
|\widehat{h_1}(s)| 
= c_k P_k \eta_k^{-d}N(J_k) |\widehat{\phi^2}(\eta_k s)| 
\gtrsim  
\log(M_k) 
M_k^{d\tau+\rho^+}.
\end{align*}
Since $\widehat{\mu_{0}}(0) \widehat{\phi}_0(0)=1$, we iterate \eqref{mukleq2} and obtain  $|\widehat{{\mu}_{l,k}}(0)| \gtrsim 1$. 
\end{proof}

\begin{lemma}
If $T^{-1} s \in \delta^{-1} J$ and $|s| \leq M_k^{1+\tau}$, then 
\begin{equation}\label{h_1_ub}
\sum_{t \neq 0 }|\widehat{h_1} (s-t)   \widehat{\mu_{l,k}} (t) | \lesssim \log(M_k) 
M_k^{d \tau + \rho^+ - 80(d+\rho)}.
\end{equation}
\end{lemma}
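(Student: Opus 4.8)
The plan is to substitute the Fourier-side formula for $h_1$ and then split the cross sum into a piece governed by Lemma \ref{H mu convolution lemma} and a genuinely off-scale piece killed by the rapid growth of $(M_k)$. First I would note that $\eta_k = M_k^{-(1+\tau)} < C_B^{-1}N(J_k)^{-1/d}$ once $M_k$ is large (since $\tau > 1 > \rho/d$), so \eqref{Phi-Fourier-eqn-2} applies and
\[
\widehat{h_1}(u) = c_k P_k \eta_k^{-d} N(J_k)\,\widehat{\phi^2}(\eta_k u)\,\mathbf{1}_{\delta^{-1}J_k}(T^{-1}u).
\]
Using $c_k \sim \log(M_k)M_k^{-2d}$ from \eqref{ck size}, $P_k = M_k^{\rho^-}$, $\eta_k^{-d} = M_k^{(1+\tau)d}$, $N(J_k) \le M_k^{d+\rho}$, the identity $\rho + \rho^- = \rho^+$, and the Schwartz decay of $\widehat{\phi^2}$, this gives, for every $Z$,
\[
|\widehat{h_1}(u)| \lesssim_Z C_k (1+\eta_k|u|)^{-Z}, \qquad C_k := \log(M_k)\,M_k^{d\tau+\rho^+}.
\]

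Next I would write $\widehat{\mu_{l,k}}(t) = \widehat{\mu_{k-1}}(t) + \bigl(\widehat{\mu_{l,k}}(t) - \widehat{\mu_{k-1}}(t)\bigr)$ (recall $\mu_{k,k} = \mu_{k-1}$, so the case $l=k$ is trivial) and bound the two resulting sums separately. For the difference term, the hypothesis $|s| \le M_k^{1+\tau}$ together with the bound on $\widehat{h_1}$ just obtained places us exactly in the hypotheses of Lemma \ref{H mu convolution lemma} (with $H_k = \widehat{h_1}$ and the above $C_k$), whence
\[
\sum_{t} |\widehat{h_1}(s-t)|\,|\widehat{\mu_{l,k}}(t) - \widehat{\mu_{k-1}}(t)| \lesssim C_k M_k^{-80(1+\tau)d} \le \log(M_k)\,M_k^{d\tau+\rho^+-80(d+\rho)},
\]
where the last step uses $(1+\tau)d > d+\rho$, which holds since $\tau d > d > \rho$.

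It remains to estimate $\sum_{t\ne 0}|\widehat{h_1}(s-t)|\,|\widehat{\mu_{k-1}}(t)|$. The key point is that $\widehat{h_1}(s-t)\ne 0$ forces $T^{-1}(s-t)\in\delta^{-1}J_k$, which combined with the hypothesis $T^{-1}s\in\delta^{-1}J_k$ gives $T^{-1}t\in\delta^{-1}J_k$; for $t\ne 0$, the norm computation from the proof of \eqref{F-3*} (via Lemma \ref{idealquotient}, Lemma \ref{C1 bounds lemma}, and \eqref{field-norm-ideal-norm-formula}) then yields $|t|\gtrsim N(J_k)^{1/d}\gtrsim M_k^{1+\rho/d}$. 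If $(M_k)$ grows rapidly enough that $M_k^{1+\rho/d}\ge 2M_{k-1}^{1+\tau}$, estimate \eqref{mu-4'} applies to $\widehat{\mu_{k-1}}$ and gives $|\widehat{\mu_{k-1}}(t)|\lesssim M_{k-1}^{-d+\rho^+}\log|t|\,(M_{k-1}^{1+\tau}/|t|)^{N-d}$. Splitting $(M_{k-1}^{1+\tau}/|t|)^{N-d}$ into two factors of exponent $(N-d)/2$, bounding the first by $(M_{k-1}^{1+\tau}/M_k^{1+\rho/d})^{(N-d)/2}$ — an arbitrarily large negative power of $M_k$ by the growth of $(M_k)$ and $N\ge 100d$ — while the second, together with the Schwartz decay of $\widehat{h_1}$ and the elementary fact that $|s-t|\ge|t|/2$ when $|t|>2|s|$, makes $\sum_t|\widehat{h_1}(s-t)|\log|t|\,(M_{k-1}^{1+\tau}/|t|)^{(N-d)/2}$ at most a fixed power of $M_k$, we conclude this sum is $\lesssim$ any prescribed negative power of $M_k$, in particular $\ll\log(M_k)M_k^{d\tau+\rho^+-80(d+\rho)}$. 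The main obstacle is purely bookkeeping: pinning down the growth conditions on $(M_k)$ so that both off-scale contributions are genuinely negligible; conceptually everything follows from combining the Fourier-support constraint $T^{-1}t\in\delta^{-1}J_k$ with the decay estimates \eqref{mu-4'} and Lemma \ref{H mu convolution lemma}.
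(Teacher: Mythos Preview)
Your proposal is correct and follows essentially the same route as the paper: compute $\widehat{h_1}$ via \eqref{Phi-Fourier-eqn-2}, apply Lemma \ref{H mu convolution lemma} to handle the $\widehat{\mu_{l,k}}-\widehat{\mu_{k-1}}$ difference, and then use the support constraint $T^{-1}t\in\delta^{-1}J_k$ to force $|t|\gtrsim M_k^{1+\rho/d}$ and finish with \eqref{mu-4'}. The only cosmetic difference is that the paper bounds the tail sum more crudely, using $|\widehat{h_1}|\lesssim C_k$ throughout and summing $\sum_{|t|\gtrsim M_k^{1+\rho/d}}|\widehat{\mu_{k-1}}(t)|$ directly to get $C_kM_k^{-97(d+\rho)}$, rather than your exponent-splitting maneuver.
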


\begin{proof}
By \eqref{Phi-Fourier-eqn-2} and \eqref{ck size}, 
\begin{align*}
\widehat{h_1}(u) & = c_k P_k \widehat{\Phi_{J_k, \eta_k}^2}(u) \\
& \lesssim c_k \eta_k^{-d} P_k \widehat{\phi^2}(\eta_k u) N(J_k) \mathbf{1}_{\delta^{-1} J_k}  (T^{-1} u) \\
& \lesssim 
\log (M_k) 
M_k^{d \tau+\rho^+} 
\left(1 + \frac{|u|}{M_k^{1 + \tau}} \right)^{-10d} \mathbf{1}_{\delta^{-1} J_k}(T^{-1} u).
\end{align*}
Therefore the hypotheses of Lemma \ref{H mu convolution lemma} are satisfied with 
$C_k = \log (M_k) 
%M_k^{d \tau}  (1 + M_k^{\rho})
M_k^{d \tau+\rho^+}$  
and $H_k(u) = \widehat{h_1}(u) \one_{\ZZ^d \setminus \cbr{s}}(u)$.  
Consequently, using the assumption $|s| \leq M_k^{1+\tau}$, we get 
$$
\sum_{t \neq 0} |\widehat{h_1}(s-t) \widehat{\mu_{l,k}}(t)|
\lesssim C_k M_{k}^{-80(1+\tau)d} + \sum_{t \neq 0} |\widehat{h_1}(s-t) \widehat{\mu_{k-1}}(t)|. 
$$
Since $T^{-1}s \in \delta^{-1} J$ and since $\widehat{h}_1(s-t)$ is zero when $T^{-1}(s-t) \notin \delta^{-1} J$, we conclude that $T^{-1}t \in \delta^{-1} J$ for those $t$ such that $\widehat{h_1}(s - t) \neq 0$. 
Thus, by \eqref{C1 norm bound} and Lemma \ref{idealquotient}, 
$|t|^d \gtrsim N(J) \gtrsim M_k^{d+ \rho}$, 
which implies $|t| \geq 2M_{k-1}^{1+\tau}$. 
Thus, by \eqref{mu-4'}, we have
\begin{align*}
|\widehat{\mu}_{k-1}(t) | 
\lesssim 
%\log(M_{k-1}) 
M_{k-1}^{-d+\rho^+} 
\log(|t|)  ( M_{k-1}^{1+\tau} / |t|)^{99d}. 
\end{align*}
Recall that $|\widehat{h}_1 (u)| \lesssim \log(M_k) 
M_k^{d \tau + \rho^+} 
= C_k$. 
Therefore 
$$
\sum_{t \neq 0} |\widehat{h_1}(s-t) \widehat{\mu_{k-1}}(t)|
= 
\sum_{|t| \gtrsim M_k^{1+\frac{\rho}{d}}} |\widehat{h_1}(s-t) \widehat{\mu_{k-1}}(t)|
\lesssim C_k M_k^{-97(d + \rho)}. 
$$
Since $80(1+\tau)d$ and $97(d+\rho)$ are both larger than $80(d+\rho)$, 
we have established \eqref{h_1_ub}. 
\end{proof}

To bound the sum involving $\widehat{h_2}$ in \eqref{phi mu decomp}, we first prove the following lemma. 
\begin{lemma}\label{h_2_est}
Suppose $\rho \neq 0$. 
For all 
$s \in \ZZ^d$, 
$$
|\widehat{h_2}(s)| \lesssim  M_k^{d\tau} (1+ M_k^{\rho -d(\tau-1)})(1 + \eta_k |s|)^{-10d}.
$$
\end{lemma}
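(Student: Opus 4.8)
The plan is to unwind $h_2$ into a superposition of smooth bumps at scale $\eta_k$, count how many such bumps land in a fundamental domain for $\ZZ^d$ (with multiplicity), and then bound $\widehat{h_2}(s)$ by the triangle inequality, since every bump has a rapidly decaying Fourier transform. By \eqref{Phi defn} and \eqref{Fk defn},
$$
h_2(x) = c_k \sum_{I \in Q(M_k)} \sum_{r \in J_k^{-1}} \sum_{r' \in I^{-1}} \eta_k^{-2d}\,\phi\!\left(\tfrac{x-r}{\eta_k}\right)\phi\!\left(\tfrac{x-r'}{\eta_k}\right),
$$
and a summand is nonzero only if $\|r-r'\|_\infty \le 2\eta_k$. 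Since $\rho \neq 0$, $N(J_k) \sim M_k^{d+\rho} \neq M_k^d \sim N(I)$, so the prime ideal $J_k$ is distinct from — hence coprime to — each prime $I \in Q(M_k)$; therefore $J_k^{-1}\cap I^{-1} = \OK$ and $J_k^{-1}+I^{-1} = (J_kI)^{-1}$. The separation lemma (Lemma~\ref{separation lemma}) then forces a nonzero summand to be either \emph{diagonal}, $r = r' \in \OK$ (contributing the bump $c_k\eta_k^{-2d}\phi^2((x-r)/\eta_k)$ once for each of the $|Q(M_k)|$ ideals), or \emph{off-diagonal}, $r \neq r'$ with $w := r-r'$ a nonzero vector of the lattice $(J_kI)^{-1}$ of length $\le 2\sqrt d\,\eta_k$; in the off-diagonal case one checks $r,r' \notin \OK$, since otherwise $w$ would be a nonzero element of $I^{-1}$ or of $J_k^{-1}$ of length $\lesssim \eta_k$, contradicting the separation bound together with $\tau > 1$ once $M_k$ is large.

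Next I would count the off-diagonal bumps. Fixing $I$, for each admissible $w$ the set of $r$ that occurs — namely $r \in J_k^{-1}$ with $r - w \in I^{-1}$ — is, if nonempty, a single coset of $J_k^{-1}\cap I^{-1} = \OK \isom \ZZ^d$, hence meets a fundamental domain in exactly one point. The number of admissible $w$ is $\lesssim \max\{1,\ \eta_k^{\,d}N(J_k)N(I)\} \sim \max\{1,\ M_k^{\rho-d(\tau-1)}\}$: any nonzero $w \in (J_kI)^{-1}$ satisfies $|N(w)| = N(\abr{w}) \ge N((J_kI)^{-1}) = (N(J_k)N(I))^{-1}$ by Lemma~\ref{idealquotient} and $|N(w)| \le C_B^d|w|^d$ by Lemma~\ref{C1 bounds lemma}, so the shortest vector of $(J_kI)^{-1}$ has length $\gtrsim (N(J_k)N(I))^{-1/d}$, and a standard packing bound turns this into the claimed count of its points in a ball of radius $2\sqrt d\,\eta_k$. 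Summing over the $|Q(M_k)|$ ideals $I$ and including the $|Q(M_k)|$ diagonal copies, $h_2$ is (up to $\ZZ^d$-periodization, as in Lemma~\ref{Phi-Fourier-Lemma}) a sum of $\lesssim |Q(M_k)|\max\{1, M_k^{\rho-d(\tau-1)}\}$ bumps of the form $c_k\eta_k^{-2d}\Psi((x-x_0)/\eta_k)$, where $\Psi$ is $\phi^2$ or $\phi(\cdot+a)\phi(\cdot-a)$ with $|a|\le\sqrt d$ — in every case a compactly supported smooth function whose $C^Z$-norms are bounded uniformly in $k$ and $a$. Hence $|\widehat b(s)| \lesssim_Z c_k\eta_k^{-d}(1+\eta_k|s|)^{-Z}$ for each bump $b$.

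Finally, the triangle inequality together with $c_k \sim \log(M_k)M_k^{-2d}$ from \eqref{ck size}, $|Q(M_k)| \sim M_k^d/\log(M_k)$ from \eqref{QMsize}, and $\eta_k^{-d} = M_k^{(1+\tau)d}$ yields
$$
|\widehat{h_2}(s)| \lesssim |Q(M_k)|\max\{1, M_k^{\rho-d(\tau-1)}\}\,c_k\eta_k^{-d}(1+\eta_k|s|)^{-Z} \lesssim M_k^{d\tau}\bigl(1+M_k^{\rho-d(\tau-1)}\bigr)(1+\eta_k|s|)^{-Z},
$$
which is the assertion upon taking $Z \ge 10d$ (the decay power just reflects how many derivatives of $\phi$ are spent), with implied constant independent of $s$. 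The hard part will be the middle step: pinning down how many short nonzero vectors the fractional-ideal lattice $(J_kI)^{-1}$ carries — this is where the arithmetic really enters, through the norm inequalities that control its shortest vector — and carefully accounting for the off-diagonal pairs (one per $(I,w)$ per fundamental domain) so that no spurious factor creeps in.
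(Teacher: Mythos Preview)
Your argument is correct, but it proceeds quite differently from the paper's. The paper expands $\widehat{h_2}(s)$ as a discrete convolution and invokes Lemma~\ref{Phi-Fourier-Lemma} to restrict the sum over $t$ to the set $\mathcal{T}(I)=\{t:\ T^{-1}t\in\delta^{-1}I\cap(T^{-1}s+\delta^{-1}J_k)\}$; Lemma~\ref{concrete crt lemma} identifies this (when nonempty) with a translate of the lattice $\delta^{-1}(I\cap J_k)=\delta^{-1}IJ_k$, whose separation $\gtrsim M_k^{2+\rho/d}$ bounds the number of $t$ with $|t|\lesssim\eta_k^{-1}$. You instead stay in physical space: you expand $h_2$ into bumps indexed by close pairs $(r,r')\in J_k^{-1}\times I^{-1}$, reduce the count to short vectors $w=r-r'$ of the lattice $(J_kI)^{-1}$, and bound that via the shortest-vector estimate plus packing. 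The two counts are dual---you are counting lattice points of $(IJ_k)^{-1}$ in a ball of radius $\eta_k$, the paper counts points of $\delta^{-1}IJ_k$ in a ball of radius $\eta_k^{-1}$---and they lead to the same per-ideal contribution $c_k(\eta_k^{-d}+N(IJ_k))$. Your route has the advantage of not touching the different ideal or the abstract Chinese remainder lemma, whereas the paper's route reuses machinery (Lemma~\ref{Phi-Fourier-Lemma}) already set up for the Fourier-decay estimates. The aside that $r,r'\notin\OK$ in the off-diagonal case is correct but unused; you could drop it.
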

\begin{proof}
Note that
\begin{align*}
\widehat{h_2}(s) 
= \sum_{I \in Q(M_k)} \sum_{t \in \mathbb{Z}^d} 
c_k \widehat{\Phi_{I, \eta_k}}(t) \widehat{\Phi_{J_k, \eta_k}}(s - t).
\end{align*}
By Lemma \ref{Phi-Fourier-Lemma}, 
\[\widehat{h_2}(s) = \sum_{I \in Q(M_k)} \sum_{t \in \mathcal{T}(I)} 
c_k 
N(I) 
 N(J_k) 
 \widehat \phi(\eta_k t)
\widehat \phi(\eta_k(s - t)),\]
where 
$$
\mathcal{T}(I)
=
\cbr{t \in \ZZ^d : T^{-1}t \in \delta^{-1}I \cap (T^{-1}s + \delta^{-1}J_k)}.
$$
Since at least one of $|s - t|$ and $|t|$ will be $\geq |s|/2$, 
the Schwartz bounds on $\widehat{\phi}$ 
imply 
$$
|\widehat \phi(\eta_k t)
\widehat \phi(\eta_k(s - t))| 
\lesssim (1+\eta_k|s|)^{-10d}(1+\eta_k|t|)^{-50d}.
$$
Therefore,
\begin{equation}\label{h2hatsum2}
|\widehat{h_2}(s)| \lesssim 
(1 + \eta_k |s|)^{-10d} \log (M_k) M_k^{\rho} 
\sum_{I \in Q(M_k)} 
\sum_{t \in \mathcal{T}(I)} 
(1+\eta_k|t|)^{-50d}. 
\end{equation}
Consider a fixed $I \in Q(M_k)$. 
By Lemma \ref{concrete crt lemma},  either the set 
$\delta^{-1} I \cap (T^{-1} s + \delta^{-1} J_k)$ is empty or 
there exists $a \in \delta^{-1}$ such that 
$$
\delta^{-1} I \cap (T^{-1} s + \delta^{-1} J_k)
=
a + \delta^{-1} I \cap \delta^{-1}J_k. 
$$ 
In the first case, $\mathcal{T}(I)$ is empty, so the sum over $t \in \mathcal{T}(I)$ in \eqref{h2hatsum2} is zero. 
In the second case, 
$\delta^{-1} I \cap (T^{-1} s + \delta^{-1} J_k)$ is a translate of 
$\delta^{-1} I \cap \delta^{-1}J_k$, 
so the separation between the elements of $\delta^{-1} I \cap (T^{-1} s + \delta^{-1} J_k)$
 is the same as the separation between the elements of $\delta^{-1} I \cap \delta^{-1}J_k$. 
By Lemma \ref{separation lemma} (with $I_1 = I_2 = ( \delta^{-1} I \cap \delta^{-1}J_k )^{-1}$), the elements of $\delta^{-1} I \cap \delta^{-1}J_k$ are separated by 
$
\gtrsim 
N(\delta^{-1} I \cap \delta^{-1}J_k)^{1/d}.
$ 
But, by Lemma \ref{idealquotient}, 
\begin{align}\label{I Jk norm calc}
N(\delta^{-1} I \cap \delta^{-1}J_k)^{1/d}
\gtrsim  
N(I \cap J_k)^{1/d}
=(N(I) N(J_k))^{1/d}
\gtrsim 
M_k^{2+\frac{\rho}{d}}, 
\end{align}
where the equality holds because $I$ and $J_k$ are relatively prime. 
Thus the elements of 
$\delta^{-1} I \cap (T^{-1} s + \delta^{-1} J)$ 
%are separated by the same amount.
are separated by 
$\gtrsim 
M_k^{2+\frac{\rho}{d}}.$ 
Since $T$ is a bounded linear isomorphism, 
the elements of $\mathcal{T}(I)$ 
are also separated by $\gtrsim M_k^{2+\frac{\rho}{d}}$. 
It follows that 
\begin{align*}
\sum_{t \in \mathcal{T}(I)} (1+\eta_k|t|)^{-50d}
&\lesssim
\sum_{t \in \ZZ^d} (1+\eta_k M_k^{2+\frac{\rho}{d}}|t|)^{-50d} 
\\
&\lesssim 
\left\{\begin{array}{cc}
    (M_k^{1+\tau} / M_k^{2+\frac{\rho}{d}})^d  & \text{if } \rho \leq d(\tau-1),  \\
    1 & \text{if } \rho > d(\tau-1). 
\end{array}\right.
\end{align*}
In any case, the sum is $\lesssim M_k^{d(\tau-1) -\rho}+1$. 
Plugging this estimate and \eqref{QMsize} into \eqref{h2hatsum2} 
completes the proof. 
\end{proof}

Now we are ready to bound the sum involving $\widehat{h_2}$ in \eqref{phi mu decomp}.

\begin{lemma}
Suppose $\rho \neq 0$. If $T^{-1} s \in \delta^{-1} J$ and $|s| \leq M_k^{1+\tau}$, then 
\begin{equation}\label{h_2_ub}
\sum_{t \in \ZZ^d} |\widehat{h_2}(s-t) \widehat{\mu_{l,k}}(t)| \lesssim \log^{1/10}(M_k) M_k^{d\tau} (1+ M_k^{\rho -d(\tau-1)}).  
\end{equation}
\end{lemma}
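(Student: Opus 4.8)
The plan is to mirror the proof of \eqref{h_1_ub}: push the pointwise bound of Lemma \ref{h_2_est} through the Convolution Stability Lemma to replace $\widehat{\mu_{l,k}}$ by the ``frozen'' measure $\widehat{\mu_{k-1}}$, and then estimate the resulting sum by playing the rapid decay of $\widehat{\mu_{k-1}}$ off against the rapid growth of the sequence $(M_k)$. (The hypothesis $T^{-1}s \in \delta^{-1}J$ plays no role in this estimate; it is harmless to carry it for uniformity with the other lemmas of this subsection.)

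First I would set $C_k = M_k^{d\tau}(1+M_k^{\rho-d(\tau-1)})$, so that Lemma \ref{h_2_est} reads $|\widehat{h_2}(u)| \lesssim C_k(1+\eta_k|u|)^{-10d}$ for all $u \in \ZZ^d$; hence $H_k(u) := |\widehat{h_2}(u)|$ satisfies hypothesis \eqref{H mu convolution 1} with this $C_k$. If $l = k$ then $\mu_{l,k} = \mu_{k-1}$ and there is nothing to reduce, so assume $l > k$. Since $|s| \leq M_k^{1+\tau}$, Lemma \ref{H mu convolution lemma} yields
\[
\sum_{t \in \ZZ^d} |\widehat{h_2}(s-t)|\,|\widehat{\mu_{l,k}}(t) - \widehat{\mu_{k-1}}(t)| \lesssim C_k M_k^{-80(1+\tau)d},
\]
which is far below the desired bound. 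By the triangle inequality it is therefore enough to show $\sum_{t \in \ZZ^d} |\widehat{h_2}(s-t)\,\widehat{\mu_{k-1}}(t)| \lesssim \log^{1/10}(M_k)\,C_k$. To prove this I would split the sum at $|t| = 2M_k^{1+\tau}$. In the range $|t| > 2M_k^{1+\tau}$ one has $|s-t| \geq |t|/2$ (using $|s| \leq M_k^{1+\tau}$), so $|\widehat{h_2}(s-t)| \lesssim C_k$ by Lemma \ref{h_2_est}, while $|t| \geq 2M_{k-1}^{1+\tau}$ lets us invoke \eqref{mu-4'} to get $|\widehat{\mu_{k-1}}(t)| \lesssim M_{k-1}^{-d+\rho^+}\log|t|\,(M_{k-1}^{1+\tau}/|t|)^{N-d}$; summing in $t$ and using $N \geq 100d$, $\rho^+ \leq d$, and that $M_{k-1}$ is tiny compared with $M_k$, this tail is $\lesssim C_k$ times a negative power of $M_{k-1}$, hence negligible. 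In the range $|t| \leq 2M_k^{1+\tau}$ I would use only the crude bound $|\widehat{h_2}(s-t)| \lesssim C_k$, reducing matters to $\sum_{|t| \leq 2M_k^{1+\tau}} |\widehat{\mu_{k-1}}(t)| \lesssim \log^{1/10}(M_k)$. Here the contribution of $|t| \leq 2M_{k-1}^{1+\tau}$ is $\lesssim M_{k-1}^{(1+\tau)d}$ by $|\widehat{\mu_{k-1}}(t)| \lesssim 1$ (from \eqref{mu-1}) together with a lattice-point count, and the contribution of $2M_{k-1}^{1+\tau} < |t| \leq 2M_k^{1+\tau}$ is $\lesssim M_{k-1}^{(1+\tau)d}\log(M_{k-1})$ by \eqref{mu-4'} again. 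Imposing on $(M_k)$ the (mild, and consistent with all previous) rapid-growth requirement $M_{k-1}^{(1+\tau)d}\log(M_{k-1}) \leq \log^{1/10}(M_k)$ then closes the estimate.

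The hard part is exactly the reduction $\sum_{|t| \leq 2M_k^{1+\tau}} |\widehat{\mu_{k-1}}(t)| \lesssim \log^{1/10}(M_k)$. Unlike $\widehat{h_1}$, the function $\widehat{h_2}$ is not supported on a sparse lattice, and the bound $C_k(1+\eta_k|u|)^{-10d}$ of Lemma \ref{h_2_est} is essentially of constant size $C_k$ throughout the whole relevant ball $|u| \lesssim M_k^{1+\tau} = \eta_k^{-1}$; so there is no sparsity or cancellation of $\widehat{h_2}$ to exploit, and one is forced to control the entire $\ell^1$-mass of $\widehat{\mu_{k-1}}$ over a ball of radius $M_k^{1+\tau}$. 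The one thing that rescues the estimate is that $\mu_{k-1}$ involves only the scales $\eta_1,\ldots,\eta_{k-1} \geq M_{k-1}^{-(1+\tau)}$, so this mass is merely polynomial in $M_{k-1}$, which is dwarfed by $\log^{1/10}(M_k)$ once $M_k$ is chosen huge relative to $M_{k-1}$; this is also why the lemma is phrased with the harmless $\log^{1/10}(M_k)$ slack rather than an $O(1)$ factor.
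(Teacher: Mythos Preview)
Your argument is correct and follows the same strategy as the paper: reduce via Lemma \ref{H mu convolution lemma} to the sum against $\widehat{\mu_{k-1}}$, then use the crude bound $|\widehat{h_2}| \lesssim C_k$ together with the rapid decay of $\widehat{\mu_{k-1}}$ from \eqref{mu-4'} and a super-exponential growth condition on $(M_k)$. The one tactical difference is the split point: the paper splits at $|t| = \log^{1/10d}(M_k)$ (having already imposed $\log^{1/10d}(M_k) \geq 2M_{k-1}^{1+\tau}$), so the small-$t$ range contains exactly $\lesssim \log^{1/10}(M_k)$ lattice points and \eqref{mu-1} finishes that range in one line, while \eqref{mu-4'} applies immediately in the large-$t$ range; your split at $|t| = 2M_k^{1+\tau}$ forces you to perform a secondary split and bound the full $\ell^1$-mass of $\widehat{\mu_{k-1}}$ over a huge ball, which costs you an extra paragraph but yields the same result under an equivalent growth hypothesis.
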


\begin{proof}
By Lemma \ref{h_2_est}, the hypotheses of Lemma \ref{H mu convolution lemma} are satisfied with 
$C_k = M_k^{d \tau} (1 + M_k^{\rho-d(\tau-1)})$ and $H_k = \widehat{h_2}$.  
Consequently, using the assumption $|s| \leq M_k^{1+\tau}$, we get 
\begin{align*}
\sum_{t \in \ZZ^d }|\widehat{h_2} (s-t) \widehat{\mu_{l,k}} (t) | 
\lesssim 
C_k M_k^{-80d(1+\tau)} 
+ 
\sum_{t \in \ZZ^d }
|\widehat{h_2} (s-t) 
\widehat{\mu_{k-1}} (t)|.  
\end{align*}
To bound the sum on the right, we consider separately the ranges $|t| < \log^{1/10d}(M_k)$ and $|t| \geq  \log^{1/10d}(M_k)$.  
In both cases, we shall use the bound  $|\widehat{h}_2(u)| \lesssim  M_k^{d\tau} (1+ M_k^{\rho -d(\tau-1)})$ furnished by Lemma \ref{h_2_est}. 
By \eqref{mu-1}, we have
\begin{align*}
\sum_{|t| < \log^{1/10d}(M_k)} |\widehat{h}_2(s-t) \widehat{\mu}_{k-1}(t)| 
\lesssim  \log^{1/10}(M_k) M_k^{d\tau} (1+ M_k^{\rho -d(\tau-1)}). 
\end{align*}
We assume $M_k$ is large enough that $\log^{1/10d}(M_k) \geq 2M_{k-1}^{1+\tau}$. 
Thus we can apply \eqref{mu-4'} (with $k-1$ in place of $k$) to obtain 
\begin{align*}
\sum_{|t| \geq \log^{1/10d}(M_k)} |\widehat{h}_2(s-t) \widehat{\mu}_{k-1}(t)| 
\lesssim M_k^{d\tau} (1+ M_k^{\rho -d(\tau-1)})\log^{-100}(M_k).
\end{align*}
\end{proof}

We now complete the proof of Lemma \ref{lem_lb_fkmu}. If $\rho \neq 0$, \eqref{f_kmu_l_lb} follows by plugging the estimates \eqref{h_1_lb}, \eqref{h_1_ub}, \eqref{h_2_ub} into 
\ref{phi mu decomp}. 
If $\rho =0$, it suffices to consider $h_1$ only. Indeed, 
\begin{align*}
h_2(x)
= 
c_k (\Phi_{J_k, \eta_k}(x))^2
+ 
\sum_{\substack{I \in Q(M) \\ I \neq J_k}} c_k \Phi_{J_k, \eta_k}(x) \Phi_{I, \eta_k}(x) 
\end{align*}
and the terms of the sum vanish  for $x \in \supp(\phi_0)$ by Lemma \ref{separation lemma}. Hence, if $\rho = 0$, we have
\[h_2(x) = c_k (\Phi_{J_k, \eta_k}(x))^2 = \frac{1}{P_k}  h_1(x) =  h_1(x). \]
So \eqref{f_kmu_l_lb} follows from \eqref{h_1_lb} and \eqref{h_1_ub}.

\subsection{Norm  Bounds}\label{norm bounds}

\begin{lemma}\label{PhiLqnorm}
For each $1 \leq q < \infty$, 
$$
\norm{\Phi_{J_k, \eta_k}}_{L^q(\mu)}^q
\lesssim 
\log^2(M_k) M_k^{qd(1 + \tau) - d + \rho^+}.
$$
\end{lemma}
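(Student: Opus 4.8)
The plan is to bound $\|\Phi_{J_k,\eta_k}\|_{L^q(\mu)}^q = \int \Phi_{J_k,\eta_k}^q \, d\mu$ by a pointwise bound on $\Phi_{J_k,\eta_k}^q$ on the support of $\mu$, together with the regularity estimate \eqref{mu(B)-3} for the $\mu$-measure of the relevant $\eta_k$-balls. First I would observe that, since $\eta_k = M_k^{-(1+\tau)}$ is much smaller than the separation $\gtrsim M_k^{-1-\rho/d}$ of the points of $J_k^{-1}$ given by Lemma \ref{separation facts}(a), for each $x$ at most one term of the sum defining $\Phi_{J_k,\eta_k}(x) = \sum_{r \in J_k^{-1}} \eta_k^{-d}\phi((x-r)/\eta_k)$ is nonzero; hence $\Phi_{J_k,\eta_k}(x) \leq \eta_k^{-d}\|\phi\|_\infty = M_k^{d(1+\tau)}\|\phi\|_\infty$ everywhere, and so $\Phi_{J_k,\eta_k}(x)^q \lesssim M_k^{qd(1+\tau)}$ pointwise.

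Next I would localize: $\Phi_{J_k,\eta_k}$ is supported on the union of the balls $B(r,\eta_k)$ with $r \in J_k^{-1}$. Only those balls meeting $\supp(\phi_0) \subseteq ([-3/8,-1/8]\cap[1/8,3/8])^d$ can contribute to the integral against $\mu$. The number of such $r \in J_k^{-1}$ is $\lesssim M_k^{d+\rho^+}$: indeed the points of $J_k^{-1}$ are $\gtrsim M_k^{-1-\rho/d}$-separated by Lemma \ref{separation facts}(a), so a fixed bounded region contains $\lesssim (M_k^{1+\rho/d})^d = M_k^{d+\rho}$ of them, and since $\rho^+ = \max\{\rho,0\} \geq \rho$ this is $\lesssim M_k^{d+\rho^+}$ (and in fact when $\rho<0$ one should be slightly careful, but $M_k^{d+\rho} \le M_k^{d+\rho^+}$ holds regardless, which is all that is needed for an upper bound). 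Each such ball $B(r,\eta_k)$ has radius $\eta_k$, so by \eqref{mu(B)-3} of Lemma \ref{k-ball regularity}, $\mu(B(r,\eta_k)) \lesssim \log^2(M_k) M_k^{-2d+\rho^-}$.

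Putting these together,
\[
\int \Phi_{J_k,\eta_k}^q \, d\mu
\lesssim M_k^{qd(1+\tau)} \cdot \#\{r \in J_k^{-1} : B(r,\eta_k) \cap \supp(\phi_0) \neq \emptyset\} \cdot \max_r \mu(B(r,\eta_k))
\lesssim M_k^{qd(1+\tau)} \cdot M_k^{d+\rho^+} \cdot \log^2(M_k) M_k^{-2d+\rho^-}.
\]
Since $\rho^+ + \rho^- = |\rho|$ and, more to the point, $\rho^+ - 2d + (d + \rho^+) - \text{(the exponent we want)}$ must match: the target exponent is $qd(1+\tau) - d + \rho^+$, and indeed $qd(1+\tau) + (d+\rho^+) + (-2d+\rho^-) = qd(1+\tau) - d + \rho^+ + \rho^-$. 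So I would need $\rho^- = 0$ here — which is true precisely when $\rho \geq 0$, i.e. $\rho^- = 0$ and $\rho^+ = \rho$. When $\rho < 0$ instead $\rho^+ = 0$ and the count of balls is genuinely $\lesssim M_k^{d+\rho} = M_k^{d - \rho^-}$, not $M_k^{d}$, which again kills the extra $M_k^{\rho^-}$: the number of contributing centers is $\lesssim M_k^{d+\rho} = M_k^{d-\rho^-}$, so the product is $M_k^{qd(1+\tau)} \cdot M_k^{d-\rho^-} \cdot \log^2(M_k) M_k^{-2d+\rho^-} = \log^2(M_k) M_k^{qd(1+\tau)-d}$, matching the claimed bound with $\rho^+ = 0$. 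In both cases the bound is $\lesssim \log^2(M_k) M_k^{qd(1+\tau)-d+\rho^+}$.

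The main obstacle is the bookkeeping with $\rho^+$ versus $\rho^-$: one must count the contributing centers as $\lesssim M_k^{d+\rho}$ (not merely $\lesssim M_k^{d+\rho^+}$) so that the $M_k^{\rho^-}$ from the regularity estimate \eqref{mu(B)-3} is absorbed, and one must verify the arithmetic $qd(1+\tau) + (d+\rho) + (-2d+\rho^-) = qd(1+\tau) - d + \rho^+$ using $\rho + \rho^- = \rho^+$. Once this is checked, the rest is a direct combination of the pointwise bound on $\Phi_{J_k,\eta_k}$, the separation lemma, and Lemma \ref{k-ball regularity}.
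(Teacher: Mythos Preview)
Your proposal is correct and follows essentially the same approach as the paper: bound $\Phi_{J_k,\eta_k}$ pointwise by $\eta_k^{-d}$, count the $\lesssim M_k^{d+\rho}$ balls of radius $\eta_k$ meeting $\supp(\mu)$ via the separation from Lemma~\ref{separation facts}(a), and apply the ball estimate \eqref{mu(B)-3}, using the identity $\rho+\rho^-=\rho^+$ to combine the exponents. The paper's proof is simply a cleaner write-up of the same argument without the initial detour through the looser count $M_k^{d+\rho^+}$; you should streamline your exposition to use $M_k^{d+\rho}$ from the start.
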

\begin{proof}
Note $\Phi_{J_k, \eta_k}(x)$ is supported on balls of radius $\eta_k = M_k^{-(1+\tau)}$ which 
(by Lemma \ref{separation facts}) are separated by $\gtrsim M_k^{-1-\frac{\rho}{d}}$. 
As the support of $\mu$ is compact, it intersects $\lesssim M_{k}^{d+\rho}$ of these balls. 
So, by \eqref{mu(B)-3} of Lemma \ref{k-ball regularity}, 
$$
\mu\rbr{\supp\rbr{\Phi_{J_k, \eta_k}}}
\lesssim 
\log^2(M_k)M_{k}^{-d+\rho^+}. 
$$
Since $\Phi_{J_k, \eta_k}$ is bounded above by $\eta_k^{-d} = M_k^{d(1 + \tau)}$, 
the desired result is immediate. 
\end{proof}

\begin{lemma}\label{Phierrorlem}
For all $s \in \ZZ^d$ and all $\xi \in \RR
^d$, 
\begin{equation}\label{Phierroreq}
\left| \widehat{\Phi_{J_k, \eta_k} \mu}(s + \xi) - \widehat{\Phi_{J_k, \eta_k} \mu}(s) \right| \lesssim |\xi| \log^2(M_k) 
M_k^{d \tau+\rho^+}. 
\end{equation}
\end{lemma}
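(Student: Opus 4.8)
The plan is to reduce the difference of Fourier transforms to an integral against the finite measure $\nu := \Phi_{J_k,\eta_k}\,\mu$ and then exploit the compact support of $\mu$ to turn the oscillatory factor into a genuine gain of $|\xi|$. Since $\Phi_{J_k,\eta_k}$ is bounded and $\mu$ is a probability measure, $\nu$ is a finite Borel measure on $\RR^d$, and by definition of the Fourier transform of a measure,
\[
\widehat{\nu}(s+\xi) - \widehat{\nu}(s)
= \int e^{-2\pi i x\cdot s}\left(e^{-2\pi i x\cdot\xi}-1\right) d\nu(x).
\]
Using the elementary inequality $|e^{-2\pi i u}-1|\leq 2\pi|u|$ together with Cauchy--Schwarz gives $|e^{-2\pi i x\cdot\xi}-1|\leq 2\pi|x|\,|\xi|$. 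By Proposition \ref{support of mu lemma}, $\supp(\nu)\subseteq\supp(\mu)\subseteq\supp(\phi_0)\subseteq[-\tfrac12,\tfrac12]^d$, so $|x|\leq\sqrt d/2\lesssim 1$ on the support of $\nu$, and therefore
\[
\left|\widehat{\nu}(s+\xi)-\widehat{\nu}(s)\right|
\lesssim |\xi|\int \Phi_{J_k,\eta_k}\,d\mu
= |\xi|\,\norm{\Phi_{J_k,\eta_k}}_{L^1(\mu)}.
\]

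It then remains to bound $\norm{\Phi_{J_k,\eta_k}}_{L^1(\mu)}$, and this is exactly the $q=1$ case of Lemma \ref{PhiLqnorm}, which yields $\norm{\Phi_{J_k,\eta_k}}_{L^1(\mu)}\lesssim \log^2(M_k)\,M_k^{d(1+\tau)-d+\rho^+}=\log^2(M_k)\,M_k^{d\tau+\rho^+}$, using $d(1+\tau)-d=d\tau$. (If one prefers a self-contained argument: $\Phi_{J_k,\eta_k}\leq\eta_k^{-d}=M_k^{d(1+\tau)}$ pointwise; its support is a union of $\eta_k$-balls centered at $J_k^{-1}$, which are $\gtrsim M_k^{-1-\rho/d}$-separated by Lemma \ref{separation facts}(a), so at most $\lesssim M_k^{d+\rho}$ of them meet the compact set $\supp\mu$; each such $k$-ball has $\mu$-measure $\lesssim\log^2(M_k)M_k^{-2d+\rho^-}$ by \eqref{mu(B)-3}; hence $\mu(\supp\Phi_{J_k,\eta_k})\lesssim\log^2(M_k)M_k^{-d+\rho^+}$, and multiplying by the sup bound gives the same estimate.) Combining this with the previous display completes the proof.

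I do not anticipate any real obstacle here; the estimate is routine once one observes that the compact support of $\mu$ converts the phase difference into a factor of $|\xi|$. The only points needing care are recording the correct power of $M_k$ and noting that $\widehat{\Phi_{J_k,\eta_k}\mu}$ is the Fourier transform of the \emph{measure} (evaluated at the possibly non-integer point $s+\xi$), not the periodic Fourier-coefficient convention. This lemma is the Lipschitz companion of the pointwise lower bound in Lemma \ref{lem_lb_fkmu}: since that lower bound holds on the lattice-type set of frequencies $s$ with $T^{-1}s\in\delta^{-1}J_k$ and $|s|\leq cM_k^{1+\tau}$, Lemma \ref{Phierrorlem} shows it persists on balls of radius $\sim M_k^{-d\tau-\rho^+}$ about each such $s$, which is precisely what is needed to pass from a pointwise lower bound to a lower bound on $\norm{\widehat{\Phi_{J_k,\eta_k}\mu}}_{L^p}$ in the proof of Proposition \ref{res fail prop}.
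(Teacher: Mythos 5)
Your proof is correct and follows essentially the same route as the paper: factor out the phase, bound $|e^{-2\pi i x\cdot\xi}-1|\lesssim|x||\xi|$ using compact support to get $\lesssim|\xi|\norm{\Phi_{J_k,\eta_k}}_{L^1(\mu)}$, and invoke Lemma \ref{PhiLqnorm} with $q=1$. The arithmetic $d(1+\tau)-d+\rho^+=d\tau+\rho^+$ checks out.
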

\begin{proof}
Since $\mu$ is supported on 
a compact set, 
we obtain that
\begin{align*}
\left| \widehat{\Phi_{J_k, \eta_k} \mu}(s + \xi) - \widehat{\Phi_{J_k, \eta_k} \mu}(s) \right| & = \left| \int \Phi_{J_k, \eta_k}(x) \left(e^{-2 \pi i (\xi + s) \cdot x} - e^{-2 \pi i s \cdot x} \right) \, d \mu(x) \right|  \\
& \leq \int \Phi_{J_k, \eta_k}(x) 2 \pi |x| |\xi| \, d \mu(x) \\
& \lesssim |\xi| \int \Phi_{J_k, \eta_k}(x) \, d \mu(x). \\
\end{align*}
We apply Lemma \ref{PhiLqnorm} with $q = 1$ to conclude \eqref{Phierroreq}.
\end{proof}

\begin{lemma}\label{count lemma}
Let $S(J_k)$ be the set of $s \in \ZZ^d$ such that $T^{-1} s \in \delta^{-1} J_k$ and $|s| \leq cM_k^{1+\tau}$. The number of elements in $S(J_k)$ is 
$
\gtrsim M_k^{d\tau - \rho}. 
$
\end{lemma}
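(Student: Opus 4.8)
The plan is to recognize $S(J_k)$ as the set of lattice points of a suitable lattice inside a Euclidean ball, to observe that this lattice is geometrically ``round'' of covolume $\sim M_k^{d+\rho}$, and then to run a standard lattice-point count, which yields $\gtrsim M_k^{d(1+\tau)}/M_k^{d+\rho} = M_k^{d\tau-\rho}$ points. Concretely, I would set $\Lambda = T(\delta^{-1}J_k)$. Since $J_k\subseteq\OK$ we have $\delta^{-1}J_k\subseteq\delta^{-1}$, and since $T^{-1}(\ZZ^d)=\delta^{-1}$, the linear map $T$ restricts to a group isomorphism $\delta^{-1}\to\ZZ^d$ carrying the subgroup $\delta^{-1}J_k$ onto $\Lambda$. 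Thus $\Lambda$ is a full-rank sublattice of $\ZZ^d$, so the requirement $s\in\ZZ^d$ is automatic once $T^{-1}s\in\delta^{-1}J_k$, and hence $S(J_k)=\Lambda\cap B(0,cM_k^{1+\tau})$. Moreover $[\ZZ^d:\Lambda]=[\delta^{-1}:\delta^{-1}J_k]=N(J_k)$ by Lemma \ref{idealquotient}, so $\Lambda$ has covolume $N(J_k)$, with $M_k^{d+\rho}/2\le N(J_k)\le M_k^{d+\rho}$.

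Next I would pin down the shape of $\Lambda$. Applying Lemma \ref{separation lemma} with $I_1=I_2=\delta J_k^{-1}$ (so that $I_1^{-1}=I_2^{-1}=\delta^{-1}J_k$), with $r_1=t$ an arbitrary nonzero element of $\delta^{-1}J_k$, and with $r_2=0$, gives $C_B\abs{t}\ge N(\delta J_k^{-1})^{-1/d}=(N(J_k)/N(\delta))^{1/d}\gtrsim M_k^{1+\rho/d}$. Since $\abs{t}\le\norm{T^{-1}}\,\abs{Tt}$, every nonzero vector of $\Lambda$ has length $\gtrsim M_k^{1+\rho/d}$; that is, the first successive minimum satisfies $\lambda_1(\Lambda)\gtrsim M_k^{1+\rho/d}$, where $\lambda_1(\Lambda)\le\dots\le\lambda_d(\Lambda)$ denote the successive minima. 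Combining this with the covolume bound and Minkowski's second theorem, $\lambda_1(\Lambda)\cdots\lambda_d(\Lambda)\sim\operatorname{covol}(\Lambda)=N(J_k)\lesssim M_k^{d+\rho}$, one obtains
\[
\lambda_d(\Lambda)\le\lambda_1(\Lambda)^{-(d-1)}\prod_{i=1}^d\lambda_i(\Lambda)\lesssim\frac{M_k^{d+\rho}}{M_k^{(d-1)(1+\rho/d)}}=M_k^{1+\rho/d}.
\]
Hence all successive minima of $\Lambda$ are $\sim M_k^{1+\rho/d}$, so the covering radius $\kappa$ of $\Lambda$ satisfies $\kappa\le\tfrac{\sqrt d}{2}\lambda_d(\Lambda)\lesssim M_k^{1+\rho/d}$.

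Finally I would carry out the count. Write $R=cM_k^{1+\tau}$. Because $\tau>1>\rho/d$, we have $R/\kappa\gtrsim M_k^{\tau-\rho/d}\to\infty$, so $R-\kappa\ge R/2$ once $M_k$ is large. Every point of $B(0,R-\kappa)$ lies within $\kappa$ of some $x\in\Lambda$, and any such $x$ lies in $B(0,R)$; hence $B(0,R-\kappa)$ is covered by the balls $B(x,\kappa)$ with $x\in\Lambda\cap B(0,R)$, and comparing volumes,
\[
|S(J_k)|=|\Lambda\cap B(0,R)|\ge\frac{\operatorname{vol}(B(0,R-\kappa))}{\operatorname{vol}(B(0,\kappa))}\ge\Bigl(\frac{R}{2\kappa}\Bigr)^{d}\gtrsim\frac{R^{d}}{M_k^{d+\rho}}\gtrsim M_k^{d\tau-\rho}.
\]
The main obstacle is the ``roundness'' step: leveraging the separation bound $\lambda_1(\Lambda)\gtrsim N(J_k)^{1/d}$ together with Minkowski's second theorem to force all successive minima of $\Lambda$ to be comparable (this is exactly what rules out the degenerate possibility of $\Lambda$ being a long thin sublattice of $\ZZ^d$, in which case the count would fail). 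Once that is granted, the identification of $S(J_k)$ with a lattice-point set and the ball-covering estimate are routine, the only quantitative input being that $M_k$ is large enough that $M_k^{1+\tau}$ dominates $M_k^{1+\rho/d}$, which holds by $\tau>1$, $\rho<d$, and the standing assumption that the $M_k$ grow rapidly.
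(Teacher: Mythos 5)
Your proof is correct, and it takes a genuinely different route from the paper. The paper first finds, by pigeonhole in $\OK/J_k$, a single short nonzero element $y_0 \in J_k$ with $|y_0| \lesssim N(J_k)^{1/d} \lesssim M_k^{1+\rho/d}$; it then exploits the ring structure of $\OK$ by multiplying $y_0$ by the $\sim M_k^{d\tau-\rho}$ integers $z \in \ZZ^d \isom \OK$ with $|z| \lesssim M_k^{\tau - \rho/d}$, checking via the multiplicativity bound $|zy_0| \lesssim |z||y_0|$ and the injectivity of $z \mapsto T(zy_0)$ that this produces $\gtrsim M_k^{d\tau-\rho}$ distinct elements of $S(J_k)$. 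You instead identify $S(J_k) = \Lambda \cap B(0,cM_k^{1+\tau})$ with $\Lambda = T(\delta^{-1}J_k)$, pin down $\operatorname{covol}(\Lambda) = N(J_k)$ via Lemma~\ref{idealquotient}, derive $\lambda_1(\Lambda) \gtrsim M_k^{1+\rho/d}$ from the separation lemma, and invoke Minkowski's second theorem to force $\lambda_d(\Lambda) \lesssim M_k^{1+\rho/d}$, after which a standard covering-radius volume comparison gives the count. In effect, the covolume bound plus the $\lambda_1$ lower bound play the role that the explicit short vector $y_0$ plays in the paper, and the pigeonhole-plus-multiplication construction is replaced by purely lattice-theoretic geometry of numbers; your approach requires no multiplicative structure on $\OK$ beyond what already went into the separation lemma, at the modest cost of bringing in Minkowski's second theorem and a covering-radius estimate as external tools.
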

\begin{proof}
Consider $\cbr{y+J_k : y \in \ZZ^d, \, |y| \leq C_d N(J_k)^{1/d}} \subseteq  \OK/J_k$. 
Here, the constant $C_d$ is chosen so that the number of points $y \in \ZZ^d$ such that $|y| \leq C_d N(J_k)^{1/d}$ is strictly greater than $N(J_k) = |\OK/J_k|$. 
By the pigeonhole principle, there exist distinct $y_1,y_2 \in \ZZ^d$ such that $|y_1|,|y_2| \leq N(J_k)$ and $y_1+J_k = y_2+J_k$. 
Then $y_0 := y_1 - y_2$ is a non-zero element of $J_k$ such that $|y_0| \leq 2C_dN(J_k)^{1/d} \lesssim M_k^{1+\frac{\rho}{d}}$. 
Recall that $\ZZ^d \subseteq \delta^{-1}$ and $T(\delta^{-1}) = \ZZ^d$. 
Thus $zy_0 \in \delta^{-1} J_k$ and $T(zy_0) \in \ZZ^d$ for all $z \in \ZZ^d$. 
Moreover, for all $z \in \ZZ^d$, 
$|T(zy_0)|
\lesssim 
|zy_0|
\lesssim 
|z||y_0|
\lesssim |z|M_k^{1+\frac{\rho}{d}}$. 
So there exists a constant $C>0$ such that $|T(zy_0)| \leq c M_k^{1+\tau}$ whenever $z \in \ZZ^d$ and $|z| \leq CM_k^{\tau-\frac{\rho}{d}}$. 
Therefore $T(zy_0) \in S(J_k)$ for all  $z \in \ZZ^d$ with $|z| \leq  CM_k^{\tau - \frac{\rho}{d}}$. 
The number of $z \in \ZZ^d$ with $|z| \leq C M_k^{\tau - \frac{\rho}{d}}$ is  $\gtrsim M_k^{d\tau - \rho}$. 
Since $T$ is invertible and $y_0$ is non-zero, $T(z_1 y_0) \neq T(z_2 y_0)$ whenever $z_1,z_2$ are distinct elements of $\ZZ^d$. 
Therefore the number of elements of $S(J_k)$ is also $\gtrsim M_k^{d\tau - \rho}$. 
\end{proof}

\begin{lemma}\label{PhihatLpnorm v2}
Let $1 \leq p < \infty$. Then 
\[\norm{\widehat{\Phi_{J_k, \eta_k} \mu} }_{L^p}^p \gtrsim \log^{{p - 2d}}(M_k)  
M_k^{(d \tau + \rho^+) p}
M_k^{ (d \tau - \rho)}
\]
\end{lemma}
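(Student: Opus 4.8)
The plan is to upgrade the pointwise lower bound of Lemma~\ref{lem_lb_fkmu}, which holds on the discrete set $S(J_k)$, to an $L^p$ bound by a standard propagation-to-balls argument, combining it with the Lipschitz-type estimate of Lemma~\ref{Phierrorlem} and the counting bound of Lemma~\ref{count lemma}.

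First I would fix the radius $\sigma_k := \log^{-2}(M_k)$ and show that $|\widehat{\Phi_{J_k,\eta_k}\mu}|$ remains comparably large on each ball $B(s,\sigma_k)$ with $s \in S(J_k)$. Every $s \in S(J_k)$ satisfies the hypotheses $T^{-1}s \in \delta^{-1}J_k$ and $|s| \le cM_k^{1+\tau}$ of Lemma~\ref{lem_lb_fkmu}, so $|\widehat{\Phi_{J_k,\eta_k}\mu}(s)| \gtrsim \log(M_k)M_k^{d\tau+\rho^+}$; and for $|\xi| \le \sigma_k$ Lemma~\ref{Phierrorlem} gives
\[
\bigl|\widehat{\Phi_{J_k,\eta_k}\mu}(s+\xi) - \widehat{\Phi_{J_k,\eta_k}\mu}(s)\bigr| \lesssim \sigma_k \log^2(M_k) M_k^{d\tau+\rho^+} \lesssim M_k^{d\tau+\rho^+}.
\]
Since $\log(M_k) \to \infty$, for $M_k$ large the error is at most half the main term, whence $|\widehat{\Phi_{J_k,\eta_k}\mu}(s+\xi)| \gtrsim \log(M_k)M_k^{d\tau+\rho^+}$ for all $s \in S(J_k)$ and $|\xi| \le \sigma_k$.

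Next, the points of $S(J_k) \subseteq \ZZ^d$ are pairwise at distance $\ge 1$ and $\sigma_k < 1/2$ for $M_k$ large, so the balls $\cbr{B(s,\sigma_k) : s \in S(J_k)}$ are pairwise disjoint, each of Lebesgue measure $\gtrsim \sigma_k^d = \log^{-2d}(M_k)$. Summing the pointwise lower bound over these balls,
\[
\norm{\widehat{\Phi_{J_k,\eta_k}\mu} }_{L^p}^p \ge \sum_{s \in S(J_k)} \int_{B(s,\sigma_k)} \bigl|\widehat{\Phi_{J_k,\eta_k}\mu}(y)\bigr|^p\,dy \gtrsim |S(J_k)|\,\log^{-2d}(M_k)\,\bigl(\log(M_k)M_k^{d\tau+\rho^+}\bigr)^p,
\]
and inserting $|S(J_k)| \gtrsim M_k^{d\tau-\rho}$ from Lemma~\ref{count lemma} yields exactly $\log^{p-2d}(M_k)M_k^{(d\tau+\rho^+)p}M_k^{d\tau-\rho}$.

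There is no genuine obstacle here; the only point requiring care is the bookkeeping of the logarithmic factors. The radius $\sigma_k \sim \log^{-2}(M_k)$ is forced by the quality of the Lipschitz estimate in Lemma~\ref{Phierrorlem}, and it is precisely this choice that costs a factor $\log^{-2d}(M_k)$ and hence produces the exponent $p-2d$ on the logarithm in the final bound. Finally, all implied constants are $k$-independent since each of the three input lemmas has $k$-independent constants.
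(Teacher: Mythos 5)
Your proposal is correct and follows exactly the same route as the paper: propagate the pointwise lower bound from Lemma \ref{lem_lb_fkmu} to balls of radius $\log^{-2}(M_k)$ around each $s\in S(J_k)$ using the Lipschitz estimate of Lemma \ref{Phierrorlem}, then sum over the $\gtrsim M_k^{d\tau-\rho}$ disjoint balls provided by Lemma \ref{count lemma}. The only difference is that you spell out the disjointness of the balls and the bookkeeping of the logarithmic factors, which the paper leaves implicit.
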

\begin{proof}
Let $S(J_k)$ be as in Lemma \ref{count lemma}. 
By Lemma \ref{lem_lb_fkmu} and Lemma \ref{Phierrorlem}, if $s \in S(J_k)$ and $|\xi| \leq {1}/{\log^2(M_k)}$, 
then 
\[
\left|\widehat{\Phi_{J_k, \eta_k} \mu}(s + \xi)\right| \gtrsim 
\log(M_k) 
M_k^{d \tau + \rho^+}.
\]
Combining this with Lemma \ref{count lemma} implies the desired result. 
\end{proof}

\begin{proof}[Proof of Proposition \ref{res fail prop}]
By Lemma \ref{PhiLqnorm} and Lemma \ref{PhihatLpnorm v2}, for each odd $k$, 
\begin{align*}
\frac{\norm{\widehat{\Phi_{J_k, \eta_k} \mu}}_{L^p}}{\norm{\Phi_{J_k, \eta_k}}_{L^q(\mu)}} 
\gtrsim  
(\log(M_k))^{\frac{p - 2d}{p} - \frac{2}{q}}
M_k^{\frac{1}{p}(d\tau - \rho)  + \frac{1}{q}(q-1)(\rho^+ - d)}
\end{align*}
If $p < p(\tau, \rho, q, d) := \frac{q(d \tau - \rho)}{(q-1) (d-\rho^+)}$, the exponent of $M_k$ is positive. 
\end{proof}

\section{General Convolution Stability Lemma}\label{convstab section}

\begin{lemma}\label{convstab lemma}
Let $G,H:\ZZ^d \to \CC$ be functions such that $|G| \leq G(0)=1$ and $H$ is bounded.
Let $a > 0$. Let $N$ be a positive integer such that 
$N>2d+2a$. Let $1 \leq A \leq B$. 
Let $D$ be a number satisfying 
\begin{equation}\label{Dcond}
B^{-(N - d)} \leq A^{-(N-d)} \leq D
\end{equation} 
Let $L: (1,\infty) \to (0,\infty)$ be a function such that, for all sufficiently large $x$, 
\begin{align}\label{Mcond6}
\text{$L(x)$ is increasing 
}
\end{align}
\begin{align}\label{Mcond2} 
\text{$L(x)/x^a$ is decreasing } 
\end{align}

Suppose the following estimates hold for all $s \in \ZZ^d$: 
\begin{align}
\label{convcond1} 
G(s) = 0 \quad \text{if } 1 \leq |s| \leq A; 
\end{align}
\begin{align}
\label{convcond2}
|G(s)| \lesssim D L(|s|) \quad \text{if } |s| \geq A;  
\end{align}
\begin{align}
\label{convcond3}
|G(s)| \lesssim D (B / |s|)^N L(|s|)  \quad \text{if } |s| \geq B;  
\end{align}
\begin{align}
\label{convcond4}
|H(s)| \lesssim |s|^{-(N-d)} L(|s|)  \quad \text{if } |s| \geq 1. 
\end{align}
If $A$ and $B$ 
are sufficiently large 
depending 
on 
the other parameters  
and on the implicit constants in \eqref{convcond2}, \eqref{convcond3}, and \eqref{convcond4}, then the following estimates hold for all $s \in \ZZ^d$:
\begin{align}\label{convlemmares1}
|G \ast H(s) - H(s)| 
\lesssim 
A^{-(N-2d)} L(A/2)
%\leq
%A^{-(N-2d-a)} L(A/2)
\quad \text{if } |s| \leq A/2; 
\end{align}
\begin{align}\label{convlemmares2}
|G \ast H(s)| %\leq  
\lesssim 
D 
%L(A) 
L(2|s|)
\quad \text{if } |s| \geq A/2 ; 
\end{align}
\begin{align}\label{convlemmares3}
|G \ast H(s)| 
%\leq 
\lesssim 
D (B/|s|)^{N-d} %L(A) 
L(|s|/2)
\quad \text{if } |s| \geq 2 B. 
\end{align}
\end{lemma}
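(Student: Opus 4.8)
The plan is to isolate the value of $G$ at the origin: since $G(0)=1$, $|G|\le 1$, and $G(t)=0$ for $1\le|t|\le A$ by \eqref{convcond1}, we have $G\ast H(s)=H(s)+\sum_{|s-u|>A}G(s-u)H(u)$, so it suffices to control $H(s)$ and the ``tail sum'' $\sum_{|s-u|>A}G(s-u)H(u)$ separately. The one elementary fact used throughout is that, for $R$ large,
\[
\sum_{|v|\ge R}|v|^{-(N-d)}L(|v|)\ \lesssim\ R^{-(N-2d)}L(R),
\]
which follows from $L(|v|)\lesssim L(R)(|v|/R)^{a}$ (monotonicity of $L(x)/x^{a}$) together with the convergence of $\sum_{|v|\ge R}|v|^{a-(N-d)}$, guaranteed by $N>2d+a$, a consequence of the hypothesis $N>2d+2a$. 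Given this, \eqref{convlemmares1} is immediate: when $|s|\le A/2$ the term $H(s)$ cancels, and $|s-u|>A$ forces $|u|>A/2$, so $|G\ast H(s)-H(s)|\le\sum_{|u|>A/2}|H(u)|\lesssim A^{-(N-2d)}L(A/2)$, using only $|G|\le 1$ and \eqref{convcond4}.

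For \eqref{convlemmares2} and \eqref{convlemmares3} the term $H(s)$ is handled by \eqref{Dcond}: when $|s|\ge A/2$ one has $|s|^{-(N-d)}\le(A/2)^{-(N-d)}\lesssim D$, so $|H(s)|\lesssim D\,L(|s|)\lesssim D\,L(2|s|)$; when $|s|\ge 2B$ one writes $|s|^{-(N-d)}=(B/|s|)^{N-d}B^{-(N-d)}$ with $B^{-(N-d)}\le A^{-(N-d)}\le D$, and uses $L(|s|)\lesssim L(|s|/2)$ (again monotonicity of $L(x)/x^{a}$) to get $|H(s)|\lesssim D(B/|s|)^{N-d}L(|s|/2)$.

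The real work is the tail sum for $|s|$ large. I would decompose it dyadically in $|s-u|$: for dyadic $R\ge A$ let $\Sigma_{R}=\{u:R\le|s-u|<2R\}$, where by \eqref{convcond2}--\eqref{convcond3} and monotonicity of $L$ and $L(x)/x^{a}$, $|G(s-u)|\lesssim D\,L(R)\min\{1,(B/R)^{N}\}$, so $\Sigma_{R}$ contributes $\lesssim D\,L(R)\min\{1,(B/R)^{N}\}\sum_{u\in\Sigma_{R}}|H(u)|$. The inner sum is estimated by comparing $R$ with $|s|$: if $R\le|s|/2$ then every $u\in\Sigma_{R}$ has $|u|\sim|s|$ and $\#\Sigma_{R}\lesssim R^{d}$, giving $\sum_{\Sigma_{R}}|H(u)|\lesssim R^{d}|s|^{-(N-d)}L(|s|)$; if $|s|/2<R<2|s|$ then $\Sigma_{R}$ reaches down to $|u|\sim 1$, but $\sum_{\Sigma_{R}}|H(u)|\lesssim\|H\|_{\infty}+\sum_{|u|\ge 1}|u|^{-(N-d)}L(|u|)\lesssim 1$; and if $R\ge 2|s|$ then every $u\in\Sigma_{R}$ has $|u|\sim R$, giving $\sum_{\Sigma_{R}}|H(u)|\lesssim R^{-(N-2d)}L(R)$. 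Summing over dyadic $R$: the range $A\le R\le|s|/2$ contributes $\lesssim D|s|^{-(N-d)}L(|s|)\sum_{R}L(R)R^{d}\lesssim D|s|^{-(N-2d)}L(|s|)^{2}$ (the series dominated by its top term), while the range $R\ge|s|/2$ contributes $\lesssim D\sum_{R}L(R)^{2}\min\{1,(B/R)^{N}\}R^{-(N-2d)}$ --- and here $N>2d+2a$ is exactly what makes $L(R)^{2}R^{-(N-2d)}$ eventually decreasing, so this series is dominated by its \emph{smallest} scale $R\sim\max\{|s|,B\}$, producing $\lesssim D\,L(2|s|)^{2}|s|^{-(N-2d)}$ for \eqref{convlemmares2} and $\lesssim D\,B^{N}L(2|s|)^{2}|s|^{-(2N-2d)}$ for \eqref{convlemmares3}. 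Finally, each output is brought to the asserted form by the single observation $x^{-(N-2d)}L(x)\lesssim 1$ for $x$ large --- valid here since $A$ is taken large depending on $L$ --- which absorbs one factor of $L$ and reduces $L^{2}$ to $L$; for \eqref{convlemmares3} the powers of $B$ are reconciled using $B/|s|\le\tfrac{1}{2}$ (so $(B/|s|)^{N}\le(B/|s|)^{N-d}$) and $D\ge B^{-(N-d)}$.

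The main obstacle is precisely this control of the $L$-factor: pairing the $L$ from $G$ with the $L$ from $\sum_{\Sigma_{R}}|H(u)|$ naively produces an $L^{2}$ at the output scale, which is fatal because in the application $L$ is a constant times $\log$, so $L^{2}/L$ is unbounded. The remedy is structural --- split the dyadic sum so that at most one $L$-factor lives at the running scale $R$ while the other is frozen at scale $|s|$, invoke $N>2d+2a$ to force the $R$-sums carrying $L(R)^{2}$ onto their smallest scale, and then spend the surplus decay $x^{-(N-2d)}$ against the surplus logarithm. Everything else --- matching the three regimes of $G$ (identically zero / size $\sim D\,L(|s-u|)$ / size $\sim D\,L(|s-u|)(B/|s-u|)^{N}$) against the position of $u$, and checking that the finitely many threshold inequalities hold uniformly in $s$ once $A$ and $B$ are large enough (in that order) --- is routine.
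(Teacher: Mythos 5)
Your overall strategy parallels the paper's — isolate the $G(0)=1$ contribution, then control the tail sum using the three regimes of $G$ and the decay of $H$ — but you replace the paper's coarse two- or three-way split of the dual variable by a dyadic decomposition in $|s-u|$. This is a legitimate alternative, and your treatment of \eqref{convlemmares1} and \eqref{convlemmares2} is correct, including the key $L^2 \to L$ absorption via $x^{-(N-2d)}L(x)\lesssim 1$. The approaches are genuinely different in mechanics (dyadic bookkeeping versus a one-shot estimate on each piece), though they rest on the same two facts: $N-d-2a>d$ to make tail sums converge at the right rate, and $L(x)/x^a$ decreasing to move $L$-factors between scales.

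There is, however, a genuine gap in your proof of \eqref{convlemmares3}. In the dyadic range $A\le R\le|s|/2$ you discard the factor $\min\{1,(B/R)^N\}$ coming from \eqref{convcond3}, replacing it by $1$; you then obtain $\lesssim D|s|^{-(N-2d)}L(|s|)^2$, which after absorption is $\lesssim DL(|s|)$. But when $|s|\ge 2B$ the required bound $D(B/|s|)^{N-d}L(|s|/2)$ is far smaller than $DL(|s|)$ (the ratio is $(B/|s|)^{N-d}$, which $\to 0$), so this range is not under control — precisely because for $B\le R\le|s|/2$ the factor $(B/R)^N$ is where all the extra decay lives. The fix is to keep the $\min$ factor: since $L(R)/R^a$ is decreasing and $d-N+a<-d<0$, the sum $\sum_{A\le R\le|s|/2}L(R)\min\{1,(B/R)^N\}R^d$ (over dyadic $R$) is dominated by $R\sim B$, giving $\lesssim L(B)B^d$, and then $L(B)B^d\le L(A)A^{-a}B^{N-d}\lesssim B^{N-d}$; plugging this in gives the first range $\lesssim D(B/|s|)^{N-d}L(|s|)\lesssim D(B/|s|)^{N-d}L(|s|/2)$ as required. (This is exactly the role of the paper's $L(B)B^d$ factor in the estimate for $S_1$ of \eqref{convlemmares3}.) One minor quibble elsewhere: your claim that $R\le|s|/2$ forces $|u|\sim|s|$ really needs $R\le|s|/4$; this is a harmless constant adjustment.
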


\begin{proof}[Proof of \eqref{convlemmares1}] 
Assume $|s| \leq \dfrac{1}{2} A$. 
By \eqref{convcond1} and because  
$|G| \leq G(0)=1$, 
we have 
$$
|G \ast H(s) - H(s)| 
= \abs{\sum_{|t| \geq A} G(t) H(s-t)}
\leq \sum_{|t| \geq A} |H(s-t)|. 
$$
The assumption $|s| \leq \dfrac{1}{2} A$ and the condition $|t| \geq A$ on the sum imply that 
$|s-t| \geq A/2$. 
Thus 
$$
|G \ast H(s) - H(s)| \leq \sum_{ |s-t| \geq A/2 } |H(s-t)|. 
$$
Now \eqref{convcond4} implies 
\begin{align*}
|G \ast H(s) - H(s)| 
&\lesssim 
\sum_{ |s-t| \geq A/2  }|s-t|^{-(N-a-d)} \dfrac{L(|s-t|)}{|s-t|^a}.  
\end{align*} 
By \eqref{Mcond2}, assuming $A$ is sufficiently large, 
$L(x)/x^a$ is decreasing for all $x \geq \dfrac{1}{2} A$. 
Moreover, $N - a - d > d$ by assumption. 
Therefore 
$$
|G \ast H(s) - H(s)| 
\lesssim 
A^{-(N-2d)} L(A/2). 
$$
\end{proof}

\begin{proof}[Proof of \eqref{convlemmares2}] 
Assume $|s| \geq A/2$. Assume $A$ is large enough that 
$L(x)$ is increasing 
and $L(x)/x^a$ is decreasing for all $x \geq A/2$. 
By \eqref{convcond1} and because $G(0)=1$, we have 
$$
G \ast H(s) = H(s) + S_1 + S_2 %H(s) + \sum_{|s-t| \geq A} G(s-t) H(t),
$$
where 
\begin{align*}
S_1 &= \sum_{\substack{|s-t| \geq A \\ |t| \leq |s|}} G(s-t) H(t), \\
S_2 &= \sum_{\substack{|s-t| \geq A \\ |t| \geq |s|}} G(s-t) H(t).
\end{align*}
By  \eqref{convcond4}, 
$$
|H(s)| \lesssim  |s|^{-(N-d)} L(|s|) \lesssim 
A^{-(N-d)} L(2|s|). 
$$
By the assumption that 
$A^{-(N-d)} \leq D$, we have 
$$|H(s)| \lesssim  DL(2|s|).$$
By \eqref{convcond2}, 
$$
|S_1| \lesssim 
D L(2|s|) \sum_{\substack{|s-t| \geq A \\ |t| \leq |s|}} |H(t)|
\leq 
D L(2|s|) \sum_{t \in \ZZ^d} |H(t)|.  
$$
Because $H$ is bounded and because of \eqref{Mcond2} and \eqref{convcond4}, 
the last sum is $\lesssim 1$. 
Thus 
$$
|S_1| \lesssim D L(2|s|). 
$$

On the other hand, by \eqref{convcond2} and \eqref{convcond4}, 
$$
|S_2| \lesssim 
D \sum_{|t| \geq |s|} |t|^{-(N-d-2a)} \dfrac{L(2|t|)}{(2|t|)^a} \dfrac{L(|t|)}{|t|^a}. 
$$
By \eqref{Mcond2} and because $N-d-2a>d$ by assumption, 
$$
|S_2| \lesssim 
D |s|^{-(N-2d)} L(2|s|) L(|s|)  
\lesssim 
D |A|^{-(N-2d)} L(2|s|) L(A). 
$$
Using \eqref{Mcond2} and $N-d-2a>d$ again, if $A$ is large enough, we have 
$$
|S_2| \leq DL(2|s|).
$$
\end{proof}

\begin{proof}[Proof of \eqref{convlemmares3}] 
Assume $|s| \geq 2B$. Assume $A$ is large enough that $L(x)$ is increasing and $L(x)/x^a$ is decreasing for all $x \geq A$. 
By \eqref{convcond1} and because $G(0)=1$, we have 
$$
G \ast H(s) = H(s) + S_1 + S_2 + S_3,
$$
where 
\begin{align*}
S_1 &= \sum_{\substack{A \leq |t| < B \\ |s - t| \geq |s|/2}} G(t) H(s-t),  
\\
S_2 &= \sum_{\substack{|t| \geq |s|/2 \\ |s - t| \leq |s|/2}} G(t) H(s-t), \\
S_3 &= \sum_{\substack{|t| \geq B \\ |s - t| \geq |s|/2}} G(t) H(s-t). 
\end{align*}
By \eqref{convcond4}, 
$$
|H(s)| \lesssim |s|^{-(N-d)} L(|s|) \leq (|s|/2)^{-(N-d)} L(|s|/2). 
$$
Because of the assumption $B^{-(N-d)} \leq D$, we have 
$$
|H(s)| \lesssim D (B/|s|)^{N-d} L(|s|/2). 
$$
By \eqref{convcond2} and \eqref{convcond4}, 
\begin{align*}
|S_1| 
&\lesssim |s|^{-(N-d)}L(|s|/2) \sum_{\substack{A \leq |t| < B \\ |s - t| \geq |s|/2}} |G(t)|
\\
&\lesssim |s|^{-(N-d)}L(|s|/2) 
\sum_{\substack{A \leq |t| < B \\ |s - t| \geq |s|/2}} DL(|t|) 
\\
&\lesssim 
D |s|^{-(N-d)} L(|s|/2) L(B) B^d
\end{align*}
Since $L(x)/x^a$ is decreasing for $x \geq A$ and since $N-d>d+a$, we have 
$L(B) B^d \leq L(A) A^{-a} B^{N-d}$. 
%%\begin{comment}
Moreover, by 
\eqref{Mcond2}, $L(A) A^{-a} \lesssim 1$.
%%\end{comment}
Thus 
$$
|S_1| \lesssim  D (B/|s|)^{N-d} L(|s|/2).
$$
By \eqref{convcond3},  
\begin{align*}
|S_2| &\lesssim D(B/|s|)^N L(|s|/2)
\sum_{\substack{|t| \geq |s|/2 \\ |s - t| \leq |s|/2}}  |H(s-t)| 
\\
&\leq 
D(B/|s|)^N L(|s|/2) \sum_{t \in \ZZ^d}  |H(s-t)|.
\end{align*}
Because $H$ is bounded, because $L(x)/x^a$ is eventually decreasing, and because of \eqref{convcond4}, the last sum is bounded by a constant independent of $s$. 
Thus 
$$
|S_2| \lesssim D (B/|s|)^{N-d} L(|s|/2). 
$$
By \eqref{convcond3} and \eqref{convcond4}, 
\begin{align*} 
|S_3| 
&\lesssim 
|s|^{-(N - d)} L(|s|/2) \sum_{\substack{|t| \geq B \\ |s - t| \geq |s|/2}} |G(t)| \\
&\lesssim 
D B^N |s|^{-(N - d)} L(|s|/2) \sum_{\substack{|t| \geq B \\ |s - t| \geq |s|/2}}  |t|^{-(N-a)} \dfrac{L(|t|)}{|t|^a}. 
\end{align*}
Since $N-a>d$ and since $L(x)/x^a$ is decreasing for $x \geq B$, the last sum is $\lesssim B^{-(N-d)}L(B)$. Therefore 
$$
|S_3| 
\lesssim 
D |s|^{-(N - d)} L(|s|/2) L(B) B^d. 
$$
Since $L(x)/x^a$ is decreasing for $x \geq A$ and since $N-d>d+a$, we have 
$L(B) B^d \leq L(A)A^{-a}B^{N-d}$. 
Moreover, by \eqref{Mcond2}, $L(A) A^{-a} \lesssim 1$. 
Thus 
$$|S_3| \lesssim D (B/|s|)^{N-d} L(|s|/2).$$
\end{proof}

\section{Hausdorff Dimension Upper Bound}\label{Hausdorff Dimension Upper Bound section}

\begin{prop}\label{hausdorff upper bound}
$\dim_H(E(K,B,\tau)) \leq 2d/(1+\tau)$.  
\end{prop}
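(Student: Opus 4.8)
The plan is a routine covering argument that exploits the $\limsup$ structure of $E(K,B,\tau)$. Since there are only finitely many ideals of any bounded norm, if $x \in E(K,B,\tau)$ then the inequality $\dist(x,I^{-1}) \le |N(I)|^{-(\tau+1)/d}$ holds for ideals $I$ of arbitrarily large norm; hence, for every $m > 0$,
\[
E(K,B,\tau) \subseteq \bigcup_{N(I) \ge m} \cbr{x \in \RR^d : \dist(x,I^{-1}) \le |N(I)|^{-(\tau+1)/d}}.
\]
Moreover $E(K,B,\tau)$ is invariant under translation by $\ZZ^d$: for $n \in \ZZ^d \isom \OK$ and any non-zero ideal $I$ we have $n \in \OK \subseteq I^{-1}$, so $I^{-1}+n = I^{-1}$ and thus $\dist(x+n,I^{-1}) = \dist(x,I^{-1})$. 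By periodicity it therefore suffices to prove $\dim_H\big(E(K,B,\tau)\cap[0,1)^d\big) \le 2d/(1+\tau)$.

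The one quantitative input I would record is a lattice-point count: for every non-zero ideal $I$, any Euclidean ball of radius $O(1)$ contains $\lesssim_d N(I)$ points of $I^{-1}$. Indeed, Lemma \ref{idealquotient} applied to $\OK \subseteq I^{-1}$ gives $|I^{-1}/\OK| = N(\OK)/N(I^{-1}) = N(I)$; since $[0,1)^d$ is a fundamental domain for $\ZZ^d \isom \OK$ and $I^{-1}$ is $\OK$-translation invariant, $I^{-1}\cap[0,1)^d$ is a complete set of representatives of $I^{-1}/\OK$ and hence has exactly $N(I)$ elements, and covering a ball of radius $C$ by $O_d(C^d)$ unit cubes gives the bound. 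Consequently, writing $r_I := |N(I)|^{-(\tau+1)/d}$, the set $\cbr{x : \dist(x,I^{-1}) \le r_I} \cap [0,1)^d$ is covered by $\lesssim_d N(I)$ balls of radius $r_I$, namely the balls centred at the points of $I^{-1}$ within distance $r_I$ of $[0,1)^d$.

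Now fix $s > 2d/(1+\tau)$ and $m$ large. Using the displayed inclusion together with the previous paragraph as a $\delta_m$-cover with $\delta_m = 2m^{-(1+\tau)/d}$, I get
\[
\mathcal{H}^{s}_{\delta_m}\big(E(K,B,\tau)\cap[0,1)^d\big) \;\lesssim_{d}\; \sum_{N(I)\ge m} N(I)\, r_I^{\,s} \;=\; \sum_{N(I)\ge m} N(I)^{\,1-s(1+\tau)/d}.
\]
Since $s > 2d/(1+\tau)$, the exponent satisfies $\sigma := s(1+\tau)/d - 1 > 1$, so the right-hand side is a tail of the convergent Dirichlet series $\sum_{I} N(I)^{-\sigma} = \zeta_K(\sigma)$ and hence tends to $0$ as $m \to \infty$; equivalently, one may split the ideals dyadically by norm and invoke the bound $\#\{I : N(I)\le X\}\lesssim X$ coming from the Landau prime ideal theorem, as in \eqref{QMsize}. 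As $\delta_m \to 0$, this forces $\mathcal{H}^{s}\big(E(K,B,\tau)\cap[0,1)^d\big) = 0$ for every $s > 2d/(1+\tau)$, giving the bound on the Hausdorff dimension and, with the first paragraph, the proposition.

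No step presents a genuine obstacle here; the only points requiring a little care are the uniform lattice-point estimate for the lattices $I^{-1}$ in a bounded region (handled above via $|I^{-1}/\OK| = N(I)$ and $\ZZ^d$-periodicity) and the summability $\sum_I N(I)^{-\sigma} < \infty$ for $\sigma > 1$, which is the standard convergence of the Dedekind zeta function on $\Re s > 1$.
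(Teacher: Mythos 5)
Your proof is correct and follows essentially the same route as the paper: reduce to a fundamental domain via $\ZZ^d$-periodicity, cover the limsup set by balls of radius $N(I)^{-(1+\tau)/d}$ centered at lattice points of $I^{-1}$, bound the number of relevant lattice points by $\lesssim N(I)$ using $|I^{-1}/\OK| = N(I)$ (from Lemma \ref{idealquotient}), and invoke convergence of the Dedekind zeta function $\zeta_K$ for $\Re(z)>1$ to show the $s$-sum tends to zero when $s > 2d/(1+\tau)$. The only differences are cosmetic (choice of fundamental domain, explicitly naming the mesh $\delta_m$, and mentioning the Landau-theorem alternative).
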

\begin{proof}
Let $\mathcal{I}(\OK)$ denote the set of all ideals of $\OK$. 
Note $E(K,B,\tau)$ is invariant under translation by elements of $\ZZ^d$ because for $\ZZ^d \isom \OK \subseteq I^{-1}$ for every $I \in \mathcal{I}(\OK)$. 
Thus it suffices to show that $E(K,B,\tau) \cap [-1/2,1/2)^d$ has Hausdorff dimension at most $2d/(1+\tau)$. 
Let $B(r,M)$ denote the closed ball in $\RR^d$ with center $r$ and radius $M$. 
Note 
\begin{align*}
E(K,B,\tau) 
 = \bigcap_{n=1}^{\infty} \bigcup_{\substack{ I \in \mathcal{I}(\OK) \\ N(I) > n}}  \bigcup_{r \in I^{-1}} 
 B( r,N(I)^{-(1+\tau)/d} ). 
\end{align*}
So, for every $n \geq 1$,  
\begin{align*}
E(K,B,\tau) \cap [-1/2,1/2)^d 
\subseteq \bigcup_{\substack{ I \in \mathcal{I}(\OK) \\ N(I) > n}} 
\bigcup_{\substack{r \in I^{-1} \\ |r| < 2}} 
 B( r,N(I)^{-(1+\tau)/d} ). 
\end{align*}
Since $\OK \isom \ZZ^d$, for every $a \in \RR^d$, 
the set $I^{-1} \cap (a+[0,1)^d)$ is a complete set of representatives for $I^{-1}/\OK$. Thus the set of $r \in I^{-1}$ with $|r|<2$ is covered by at most $4^d$ complete sets of representatives of $I^{-1}/\OK$. By Lemma \ref{idealquotient}, 
$|I^{-1} / \OK| 
= {N(\OK)}/{N(I^{-1})} 
= N(I)$.  
Therefore the set of $r \in I^{-1}$ such that $|r| < 2$ has size $\lesssim N(I)$. Thus, for every $s \geq 0$, 
\begin{align*}
\sum_{\substack{ I \in \mathcal{I}(\OK) \\ N(I) > n}} 
\sum_{\substack{r \in I^{-1}\\ |r| < 2}} 
	\rbr{ \text{diam}( B( r,N(I)^{-(1+\tau)/d} ) ) }^s
\lesssim   
\sum_{\substack{ I \in \mathcal{I}(\OK) \\ N(I) > n}} N(I)^{1-s(1+\tau)/d}. 
\end{align*}
If $s > 2d/(1+\tau)$, the last sum goes to zero as $n \to \infty$. (This is because the sum $\zeta_{K}(z) := \sum_{ I \in \mathcal{I}(\OK) } N(I)^{-z}$, known as the Dedekind zeta function, converges whenever $\Re(z)>1$; see \cite[Ch.10]{Jarvis14} for a proof.)
Therefore the Hausdorff dimension of $E(K,B,\tau) \cap [-1/2,1/2]^d$ is at most $2d/(1+\tau)$. 
\end{proof}

\section{Appendix: modifications for \texorpdfstring{$E_{\prin}(K,B,\tau)$}{Eprin(K,B,tau)}}\label{sec_prin}
We will describe the modifications necessary to prove that Theorem \ref{mainthm3} holds for the set $E_{\prin}(K,B,\tau)$. The key 
%to adapting the argument to apply to $E_{\prin}$ 
is to modify the argument to apply to \textit{principal} ideals. This does not change the argument much because of the following standard fact from algebraic number theory.
\begin{thm}[Finiteness of the ideal class group]\label{idealclassthm}
Let $K$ be an algebraic number field. Then there exists a finite collection $\mathcal{H}$ of integral ideals such that if $I$ is an ideal of $\OK$, then there exists $H_I \in \mathcal{H}$ such that the ideal $I H_I$ is a principal ideal.
\end{thm}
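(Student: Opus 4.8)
The plan is to derive the statement from the finiteness of the ideal class group $\mathrm{Cl}(K)$, i.e.\ the quotient of the group of nonzero fractional ideals of $\OK$ by the subgroup of principal fractional ideals; I would prove this finiteness by an elementary pigeonhole (geometry of numbers) argument rather than by Minkowski's lattice point theorem. Granting that $\mathrm{Cl}(K)$ is finite, say with classes $c_1,\dots,c_h$, I would choose for each $i$ an integral ideal $H_i$ lying in the inverse class $c_i^{-1}$ (an integral representative exists: any class is the class of some fractional ideal $\frac{1}{r}I$ with $I$ integral, and $\abr{r}\cdot\frac{1}{r}I = I$ lies in the same class), and set $\mathcal{H} = \{H_1,\dots,H_h\}$. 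Then, given any nonzero ideal $I$ of $\OK$, it lies in some class $c_i$, so $I H_i$ lies in the trivial class $c_i c_i^{-1}$ and is hence a principal ideal; it is integral as a product of integral ideals. (The zero ideal is vacuous, since $\abr{0}$ is principal.) This completes the deduction, so everything reduces to the finiteness of $\mathrm{Cl}(K)$.

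To prove $\mathrm{Cl}(K)$ is finite, I would first establish a uniform estimate: there is a constant $C = C(K,B) > 0$ such that every nonzero integral ideal $J$ of $\OK$ contains a nonzero element $\alpha$ with $|N(\alpha)| \le C\, N(J)$. Setting $m = \lfloor N(J)^{1/d}\rfloor$ and using $(m+1)^d > N(J) = |\OK/J|$ (Lemma \ref{idealquotient}), two of the $(m+1)^d$ elements $\sum_{i=1}^d c_i \omega_i$ with $c_i \in \{0,1,\dots,m\}$ are congruent modulo $J$, and their difference $\alpha = \sum_i e_i \omega_i$ is a nonzero element of $J$ with $|e_i| \le m$. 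Then $|\alpha| = \left(\sum_i |e_i|^2\right)^{1/2} \le \sqrt{d}\, m \le \sqrt{d}\, N(J)^{1/d}$, so \eqref{C1 norm bound} gives $|N(\alpha)| \le C_B^d |\alpha|^d \le (\sqrt{d}\, C_B)^d N(J)$, which is the claim.

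With this in hand, I would argue: given any class $c$, pick an integral ideal $J$ in the inverse class $c^{-1}$ and let $\alpha \in J$ be as above. Since $\alpha \in J$ we have $\abr{\alpha} \subseteq J$, so $J' := J^{-1}\abr{\alpha} \subseteq J^{-1}J = \OK$ is an integral ideal; it lies in the class of $J^{-1}$, namely $c$, and by multiplicativity of the ideal norm (so $N(J^{-1}) = N(J)^{-1}$) together with \eqref{field-norm-ideal-norm-formula}, $N(J') = |N(\alpha)|/N(J) \le C$. Hence every ideal class contains an integral ideal of norm at most $C$. Finally, there are only finitely many integral ideals of norm at most $C$: if $J'$ is integral with $N(J') = n$, then $\OK/J'$ is a group of order $n$, so $n \in J'$ and thus $n\OK \subseteq J' \subseteq \OK$; as $\OK/n\OK$ is finite (isomorphic to $(\ZZ/n\ZZ)^d$ as a group), it has finitely many subgroups, hence there are finitely many such $J'$, and summing over $n \le C$ bounds the total. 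Combined with the previous sentence, $\mathrm{Cl}(K)$ is finite.

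The only step with genuine content is the uniform bound $|N(\alpha)| \le C\, N(J)$, which is precisely the Minkowski-bound input and the sole place where number theory really enters; everything else is bookkeeping with fractional versus integral ideals, handled by Lemmas \ref{idealquotient} and \ref{C1 bounds lemma} and by \eqref{field-norm-ideal-norm-formula}. A minor point to keep straight is that the statement wants $\mathcal{H}$ to consist of \emph{integral} ideals and $IH_I$ to be \emph{principal} (not norm-minimal), which is exactly why one takes representatives of the inverse classes $c_i^{-1}$ rather than of the $c_i$ themselves.
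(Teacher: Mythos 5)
The paper does not prove Theorem \ref{idealclassthm} at all: it is invoked as a ``standard fact from algebraic number theory'' (with \cite{Jarvis14} as the general reference for such material), so there is no internal proof to compare against. Your argument supplies a correct, self-contained proof. The reduction to finiteness of the class group by choosing integral representatives $H_i$ of the inverse classes $c_i^{-1}$ is exactly right, and your pigeonhole bound --- taking $m = \lfloor N(J)^{1/d}\rfloor$, noting $(m+1)^d > |\OK/J|$, and extracting a nonzero $\alpha = \sum_i e_i\omega_i \in J$ with $|e_i|\le m$ so that \eqref{C1 norm bound} gives $|N(\alpha)| \le (\sqrt{d}\,C_B)^d N(J)$ --- is the standard elementary route (the weak Hurwitz/Minkowski bound) that avoids Minkowski's convex body theorem. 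The remaining steps (passing to $J' = J^{-1}\langle\alpha\rangle$, which is integral, in the target class, and of norm $\le C$ by multiplicativity and \eqref{field-norm-ideal-norm-formula}; and finitely many integral ideals of bounded norm via $n\OK\subseteq J'\subseteq\OK$) are all sound and use only lemmas the paper already has, namely Lemma \ref{idealquotient} and Lemma \ref{C1 bounds lemma}. No gaps; one could tighten the constant with the Minkowski bound, but that is irrelevant here since only finiteness of $\mathcal{H}$ is needed.
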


Using Theorem \ref{idealclassthm}, we define the following analog of $Q(M)$ consisting of principal ideals: 
\[Q_{\prin}(M) = \left\{I H_I : I \in Q(M/R) \right\}\]
where 
$$
R = \max\cbr{ N((H_{1}^{-1} + H_{2}^{-1})^{-1}) : H_1, H_2 \in \mathcal{H} }. 
$$
Then $Q_{\prin}(M)$ is a collection of $\sim {M^d}/{\log M}$ principal ideals of norm $\sim M^d$ that are ``almost" prime ideals. 

Everywhere in the original argument, we replace $Q(M)$ by $Q_{\prin}(M)$.   
%We also replace $J_k$ with the  principal ideal $J_k H_{J_k}$. 
In particular, this means $J_k \in Q_{\prin}(M_k)$. 

Lemma \ref{primedivisorbound} is replaced by the following lemma (which is actually a corollary of Lemma \ref{primedivisorbound}). 

\begin{lemma}\label{principaldivisorbound}
Let $J$ be a nonzero integral ideal of $\OK$. Then 
\begin{equation}\label{principaldivisorineq}
|\{I \in Q_{\prin}(M) : J \subseteq I \}| \leq \frac{\log (2N(J))}{d \log (M/R)}.
\end{equation}
\end{lemma}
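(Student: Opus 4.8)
The plan is to deduce Lemma \ref{principaldivisorbound} directly from Lemma \ref{primedivisorbound}. The key observation is that each element $I' \in Q_{\prin}(M)$ is, by definition, of the form $I' = I H_I$ where $I \in Q(M/R)$ is a \emph{prime} ideal and $H_I \in \mathcal{H}$. So if $J \subseteq I H_I$, then in particular $J \subseteq I$ (since $I H_I \subseteq I$ because $H_I \subseteq \OK$). Thus the map $I H_I \mapsto I$ sends the set on the left-hand side of \eqref{principaldivisorineq} into the set $\{I \in Q(M/R) : J \subseteq I\}$ counted by Lemma \ref{primedivisorbound}.

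First I would argue that this map is injective: since $I$ is prime and $I H_I$ factors uniquely into primes, the prime $I$ is recoverable from $I H_I$ (it is the unique prime factor of $I H_I$ not dividing any element of $\mathcal{H}$, or more simply, distinct primes $I_1 \neq I_2$ in $Q(M/R)$ give distinct products $I_1 H_{I_1} \neq I_2 H_{I_2}$ because $I_j$ can be read off from the factorization). Actually, for the purposes of the inequality, injectivity is not even strictly needed if one is willing to phrase $Q_{\prin}(M)$ as an indexed family, but it is cleanest to note it. Hence
\[
|\{I' \in Q_{\prin}(M) : J \subseteq I'\}| \leq |\{I \in Q(M/R) : J \subseteq I\}|.
\]

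Then I would apply Lemma \ref{primedivisorbound} with $M$ replaced by $M/R$ to bound the right-hand side by $\frac{\log(2N(J))}{d \log(M/R)}$, which is exactly \eqref{principaldivisorineq}.

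I do not expect any serious obstacle here; this is genuinely a one-line corollary once the containment $J \subseteq I H_I \implies J \subseteq I$ is noted. The only point requiring a moment's care is making sure the counting is done with the correct parameter ($M/R$ rather than $M$) so that the denominator $d\log(M/R)$ appears, and confirming that the definition of $Q_{\prin}(M)$ indeed ranges over $I \in Q(M/R)$, which it does by construction.
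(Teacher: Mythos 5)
Your proof is correct and matches the paper's argument: both use the implication $J \subseteq IH_I \implies J \subseteq I$ (since $H_I \subseteq \OK$) to reduce to Lemma \ref{primedivisorbound} applied with $M/R$ in place of $M$. The injectivity remark is a nice clarification but, as you note, unnecessary since the inequality only requires the map $I \mapsto IH_I$ not to increase cardinality.
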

\begin{proof}
This follows from Lemma \ref{primedivisorbound} upon observing that 
\begin{align*}
|\cbr{I \in Q_{\text{prin}}(M) : J \subseteq I    }|
&=
|\cbr{I H_I : I \in Q(M/R), J \subseteq I H_I}| 
\\
&\leq 
|\cbr{I H_I : I \in Q(M/R), J \subseteq I}| 
\\
&\leq 
|\cbr{I : I \in Q(M/R), J \subseteq I}|.
\end{align*}
\end{proof}

The proof of Lemma \ref{separation facts}(c) must be changed because distinct ideals in $Q_{\prin}(M_k)$ may not be relatively prime. 
To deal with this, we modify the definition of $\phi_0$ so that its support is contained in $([-3/(8R^{1/d}), -1/(8R^{1/d})] \cap [1/(8R^{1/d}), 3/(8R^{1/d})])^d$. Here is a restatement of Lemma \ref{separation facts}(c) and its new proof. 
\begin{lemma}
Let $k \in \mathbb{N}$. Suppose $I_1, I_2 \in Q_{\prin}(M_k)$, $r_1 \in I_1^{-1}, I_{2}^{-1}$. 
Then $I_1 = I_1' H_{I_1'}$ and $I_2 = I_2' H_{I_2'}$ for some $I_1',I_2' \in Q(M_k/R)$.  
If $r_1 = r_2$ and $I_1 \neq I_2$, then 
$r_1 = r_2 \in H_{I_1'}^{-1} + H_{I_2'}^{-1}$
 and (consequently) 
$\dist(r_1,\supp(\phi_0)) = \dist(r_2,\supp(\phi_0)) > 3\eta_k$. 
\end{lemma}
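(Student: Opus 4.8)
The plan is to mimic the proof of Lemma~\ref{separation facts}(c) but replace the use of relative primality of distinct prime ideals $I_1,I_2$ with a statement about the ideals $H_{I_1'}, H_{I_2'}$ from the finite collection $\mathcal H$. Write $I_1 = I_1' H_{I_1'}$ and $I_2 = I_2' H_{I_2'}$ with $I_1', I_2' \in Q(M_k/R)$. From $r_1 = r_2 =: r \in I_1^{-1} \cap I_2^{-1}$ we get $r I_1 \subseteq \OK$ and $r I_2 \subseteq \OK$. Since $I_j' H_{I_j'} = I_j$, we have $r I_j' H_{I_j'} \subseteq \OK$, hence $r H_{I_j'} \subseteq (I_j')^{-1}\OK \cap K$; more directly, since $H_{I_j'} \subseteq I_j$ (as $I_j = I_j' H_{I_j'} \subseteq H_{I_j'}$ only if $I_j' \supseteq \OK$, which is false in general --- so one must instead argue via $r H_{I_j'}$ being contained in a fixed fractional ideal). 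The cleaner route: note $r \in (I_1' H_{I_1'})^{-1} = (I_1')^{-1} H_{I_1'}^{-1}$, and since $I_1' \subseteq \OK$ we have $(I_1')^{-1} \supseteq \OK$, so this does not immediately put $r$ in $H_{I_1'}^{-1}$. Instead I would use that the primes $I_1', I_2' \in Q(M_k/R)$ \emph{are} distinct (distinctness of $I_1, I_2$ forces distinctness of $I_1', I_2'$ once $M_k$ is large, since equal $I_1' = I_2'$ would force $H_{I_1'} = H_{I_2'}$ and hence $I_1 = I_2$), and therefore $I_1', I_2'$ are relatively prime: $I_1' + I_2' = \OK$.

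Then the key computation is: $r(I_1' + I_2') = \OK$ gives $r \in r\OK = r I_1' + r I_2'$. Now $r I_1' = r (I_1' H_{I_1'}) H_{I_1'}^{-1} = (r I_1) H_{I_1'}^{-1} \subseteq \OK \cdot H_{I_1'}^{-1} = H_{I_1'}^{-1}$, using $r I_1 \subseteq \OK$. Similarly $r I_2' \subseteq H_{I_2'}^{-1}$. Therefore $r \in H_{I_1'}^{-1} + H_{I_2'}^{-1}$, which is the claimed conclusion. For the distance statement, by the definition of $R$ we have $N((H_{I_1'}^{-1} + H_{I_2'}^{-1})^{-1}) \le R$, so by Lemma~\ref{separation lemma} (or directly Lemma~\ref{C1 bounds lemma} together with Lemma~\ref{idealquotient}) any nonzero element $t$ of the fractional ideal $H_{I_1'}^{-1} + H_{I_2'}^{-1}$ satisfies $|t| \ge C_B^{-1} N((H_{I_1'}^{-1}+H_{I_2'}^{-1}))^{-1/d} \ge C_B^{-1} R^{-1/d}$; but actually what is needed is a \emph{lower bound on the distance from $r$ to $\supp(\phi_0)$}, which comes instead from the coordinates of $r$ lying in the lattice $H_{I_1'}^{-1} + H_{I_2'}^{-1}$ being well-separated from the coordinates allowed in $\supp(\phi_0) \subseteq ([-3/(8R^{1/d}), -1/(8R^{1/d})] \cap [1/(8R^{1/d}), 3/(8R^{1/d})])^d$.

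More precisely, I would argue: the fractional ideal $\mathfrak a := H_{I_1'}^{-1} + H_{I_2'}^{-1}$ contains $\ZZ^d \isom \OK$ and, via the integral basis identification, corresponds to a lattice $L \supseteq \ZZ^d$ with $[L : \ZZ^d] = N(\mathfrak a^{-1})^{-1} \cdots$ --- here one uses Lemma~\ref{idealquotient} to control the covolume. The point is that every coordinate of every element of $L$ is a rational with bounded denominator, and in fact one shows that a suitable scaling $R^{1/d}$ clears denominators enough that $r$ cannot be within $3\eta_k$ of the annular box defining $\supp(\phi_0)$ --- precisely as in the original Lemma~\ref{separation facts}(c) where $r \in \OK \isom \ZZ^d$ forced each coordinate into $\ZZ$, hence at distance $\ge 1/8$ from $\supp(\phi_0)$. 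Here the coordinates need not be integers, but they lie in $\tfrac{1}{R}\ZZ$-type sets after clearing by the choice of $R$, and the shrinking of $\supp(\phi_0)$ by the factor $R^{1/d}$ exactly compensates. Since $\eta_k = M_k^{-(1+\tau)} \to 0$, once $M_k$ is large enough the separation $> 3\eta_k$ holds.

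The main obstacle I expect is the bookkeeping in this last step: making rigorous the claim that elements of the lattice $\mathfrak a = H_{I_1'}^{-1}+H_{I_2'}^{-1}$, written in the integral basis, have coordinates whose distance from the rescaled annular box $([-3/(8R^{1/d}),-1/(8R^{1/d})]\cap[1/(8R^{1/d}),3/(8R^{1/d})])^d$ is bounded below by an absolute constant (and in particular exceeds $3\eta_k$). The original argument had the luxury that $r \in \ZZ^d$ exactly; here one must track how the denominators appearing in $H^{-1}$ for $H \in \mathcal H$ interact with the definition of $R$ and the shrink factor $R^{1/d}$. Once that is set up --- essentially, $R$ is chosen so large that every coordinate of every element of every $H_1^{-1}+H_2^{-1}$ ($H_1,H_2\in\mathcal H$) lies in a subgroup of $\RR$ whose nonzero points stay outside the $R^{-1/d}$-scaled box by a definite margin --- the rest is routine and follows the template of Lemma~\ref{separation facts}(c).
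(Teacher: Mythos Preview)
Your argument that $r \in H_{I_1'}^{-1} + H_{I_2'}^{-1}$ is correct and is exactly the paper's proof: from $I_1' + I_2' = \OK$ you get $r = r\cdot 1 \in rI_1' + rI_2'$, and $rI_j' = (rI_j)H_{I_j'}^{-1} \subseteq H_{I_j'}^{-1}$ since $rI_j \subseteq \OK$. Your justification that $I_1' \neq I_2'$ (because $H_{I'}$ is determined by $I'$) is also fine.

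For the distance statement, however, you had the right idea and then talked yourself out of it. The bound $|r| \geq C_B^{-1} N\bigl((H_{I_1'}^{-1}+H_{I_2'}^{-1})^{-1}\bigr)^{-1/d} \geq C_B^{-1} R^{-1/d}$ for $r \neq 0$ (from Lemma~\ref{C1 bounds lemma} and Lemma~\ref{idealquotient}) is exactly what the paper uses: since $\supp(\phi_0)$ lives in a box of side $\sim R^{-1/d}$ around the origin, any nonzero $r$ with $|r| \gtrsim R^{-1/d}$ is at distance $\gtrsim R^{-1/d} > 3\eta_k$ from $\supp(\phi_0)$, and $r=0$ is handled by the inner radius $1/(8R^{1/d})$ of the annular box. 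That is the whole argument.

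Your proposed alternative via coordinates does not work as written. The claim that coordinates of elements of $H_{I_1'}^{-1}+H_{I_2'}^{-1}$ lie in something like $\tfrac{1}{R}\ZZ$, or that the scaling $R^{1/d}$ ``clears denominators,'' is not justified: $R$ is defined as a maximum of \emph{ideal norms} $N((H_1^{-1}+H_2^{-1})^{-1})$, which controls covolumes of the lattices, not the denominators of individual coordinates in the integral basis. (There is \emph{some} uniform denominator bound, since $\mathcal H$ is finite, but it is unrelated to the specific $R$ in the paper.) Also, $R$ is not a free parameter you can enlarge --- it is fixed by $\mathcal H$. Drop the coordinate argument and finish with the norm lower bound you already stated.
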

\begin{proof}
Since $I_1 \neq I_2$, we must have $I_1' \neq I_2'$.  
Thus $I_1'$ and $I_2'$ are distinct prime ideals and (hence) are relatively prime, i.e., $1 \in I_1'+I_2'$. 
Set $r = r_1 = r_2$. 
Since $r \in I_1^{-1}$ and $r \in I_2^{-1}$, we have $r I_1' \subseteq H_{I_1'}^{-1}$ and $r I_2' \subseteq H_{I_2'}^{-1}$. 
Therefore $r = r \cdot 1 \in r(I_1' + I_2') = r I_1' + r I_2' \subseteq H_{I_1'}^{-1} + H_{I_2'}^{-1}$.
Thus, if $r \neq 0$, then 
$$
|r|^d \gtrsim N(r) \geq N(H_{I_1'}^{-1} + H_{I_2'}^{-1}) \geq R^{-1}. 
$$
Since $\supp(\phi_0)$ is contained in $([-3/(8R^{1/d}), -1/(8R^{1/d})] \cap [1/(8R^{1/d}), 3/(8R^{1/d})])^d$, we get the desired result. 
\end{proof} 
Note that (with the modifications to $\phi_0$)  
all the estimates on $F_k$, $\mu_k$, and the other functions and measures still hold with suitable modifications to the implied constants (which now depend on $R$). 

Finally, the calculation  \eqref{I Jk norm calc} in the proof of Lemma \ref{h_2_est} must be modified. 
Now $I = I' H_{I'}$ and $J_k = J_k' H_{J_k'}$ 
for some $I' \in Q(M_k/R)$ and 
$J_k' \in Q(M_k^{1+\frac{\rho}{d}}/R)$. 
So \eqref{I Jk norm calc} becomes 
\begin{align*}
N(\delta^{-1} I \cap \delta^{-1} J_k)^{1/d}
&=
N(\delta^{-1} I' H_{I'} \cap \delta^{-1} J_k' H_{J_k'})^{1/d} 
\gtrsim 
N(I' H_{I'} \cap J_k' H_{J_k'})^{1/d}
\\
&\geq  
N(I' \cap J_k')^{1/d} 
= 
(N(I')N(J_k'))^{1/d}
\gtrsim M_k^{2+\frac{\rho}{d}}.
\end{align*}
No other changes are needed.

%\section*{Acknowledgements}

\bibliographystyle{myplain}
\bibliography{Approximation_In_Number_Fields}
\end{document}